\pgfplotsset{compat=1.13} 
\tikzset{
	labl/.style={anchor=north,rotate = 270 ,inner sep=.5mm}
}
\tikzset{
	symbol/.style={
		draw=none,
		every to/.append style={
			edge node={node [sloped, allow upside down, auto=false]{$#1$}}}
	}
}
\def\defi[#1]{{\bf{#1}}}
\newtheorem{thm}{Theorem}[section]
\newtheorem{prop}[thm]{Proposition}
\newtheorem{lemma}[thm]{Lemma}
\newtheorem{cor}[thm]{Corollary}
\newtheorem{claim}{Claim}
\newenvironment{thmintro}[1]
{\innerthmintro}
{\endinnerthmintro}
\theoremstyle{definition}
\newtheorem{definition}[thm]{Definition}
\newtheorem{remark}[thm]{Remark}
\numberwithin{equation}{section}
\newcommand{\Par}{\mathrm{Par}}
\newcommand{\Hig}{\mathrm{Higgs}}
\newcommand{\rk}{\mbox{rk}}
\newcommand{\elm}{\mathrm{elm}}
\newcommand{\ZZ}{\mathcal Z}
\newcommand{\BB}{\operatorname{Bun}_{0}}
\newcommand{\contotal}{\mathfrak{Con}^\nu}
\definecolor{grn}{rgb}{0.0, 0.49, 0.0}
\begin{document}

\title{On the moduli of logarithmic connections on elliptic curves}

\author[T. Fassarella, F. Loray and A. Muniz]{Thiago Fassarella$^1$, Frank Loray$^2$ and Alan Muniz$^{1,3}$ }

\address{\small $^1$Universidade Federal Fluminense -- UFF, Rua Alexandre Moura 8, Niterói, RJ, Brazil }
\address{\small $^2$Univ Rennes, CNRS, IRMAR - UMR 6625, F-35000 Rennes, France }
\address{\small $^3$Universidade Federal do Espírito Santo -- UFES, Av. Fernando Ferrari 514, Vitória, ES, Brasil}
\email{tfassarella@id.uff.br, frank.loray@univ-rennes1.fr, alannmuniz@gmail.com}

\subjclass[2020]{Primary 34M55; Secondary 14D20, 32G20, 32G34.}
\keywords{logarithmic connection, parabolic structure, elliptic curve, apparent singularities, symplectic structure.}
\thanks{
The second author is supported by CNRS, and ANR-16-CE40-0008 project ``Foliage''. 
This work was conducted during the postdoctoral periods of the third author at UFES and UFF, when he was supported in part by the Coordenação de Aperfeiçoamento de Pessoal de Nível Superior -- Brasil (CAPES) -- Finance Code 001. The authors also thank Brazilian-French Network in Mathematics and CAPES-COFECUB project MA 932/19.
}
\date{\today}

\begin{abstract}
We describe moduli spaces of logarithmic rank $2$ connections on elliptic curves
with $n \geq 1$ poles and generic residues. In particular, we generalize a previous work by the first and second named authors. Our main approach is to analyze the underlying parabolic bundles; their stability and instability play a major role. 
\end{abstract}

\maketitle

\tableofcontents

\section{Introduction}

In this paper, we investigate the geometry of certain moduli spaces of connections on complex elliptic curves $C$. We will consider pairs $(E,\nabla)$ where $E\rightarrow C$ is a rank $2$ vector bundle and $\nabla\colon E\rightarrow E\otimes\Omega^1_C(D)$ is a logarithmic connection with (reduced) polar divisor $D=t_1+\cdots+t_n$ with $n\geq 1$. We also prescribe the following data:
\begin{itemize}
\item The eigenvalues $(\nu_i^+, \nu_i^-)$ of ${\rm Res}_{t_i}(\nabla)$, for each $i = 1, \dots, n$, such that:
\begin{enumerate}  
	\item $\nu_1^{\epsilon_1}+\cdots+\nu_n^{\epsilon_n} \notin \mathbb{Z}$, for any $\epsilon_i \in \{+,- \}$;
	\item and $\nu_i^+ \neq \nu_i^-$, for $i = 1, \dots, n$; 
\end{enumerate} 
\item A trace connection $(L,\zeta)$, i.e. $\det(E)= L$ and ${\rm tr}(\nabla) = \zeta$;
\end{itemize}
In particular $\sum_i\nu_i^+ +\nu_i^-=-\deg(L)$ which is called the Fuchs relation. Once we have fixed this data, we can define the moduli space $\contotal$ of those pairs $(E,\nabla)$ up to isomorphism. Due to Inaba's construction \cite{Inaba}, $\contotal$ is a smooth irreducible quasi-projective variety of dimension $2n$, equipped with an algebraic symplectic structure. These moduli spaces have been constructed much earlier in the analytic setting (see for instance
\cite{SimpsonHarmBun,NaHK}):
they are building blocks for the non-abelian Hodge correspondance. They turn out to be hyperkahler manifolds
diffeomorphic to tamely ramified Higgs bundle moduli spaces
(see also \cite{Iwasaki91,Nitsure,Konno}). We also note that these moduli spaces  occur in geometric Langlands correspondance (see \cite{Arinkin}), Riemann-Hilbert correspondance and isomonodromic deformations (see \cite{IIS}). 


We will consider the forgetful map $\pi\colon (E,\nabla)\mapsto (E,{\bf p})$ which associates to
a connection an underlying quasi-parabolic bundle. Given a choice of signs $\epsilon_i \in \{+, -\}$ for each $i = 1, \dots, n$,
the parabolic data ${\bf p}^{\epsilon}(\nabla)=(p_1^{\epsilon_1}(\nabla),\ldots,p_n^{\epsilon_n}(\nabla))$ consists
of the $\nu_i^{\epsilon_i}$-eigenspace $p_i^{\epsilon_i}\subset E\vert_{t_i}$ for ${\rm Res}_{t_i}(\nabla)$ at each pole; these are well-defined since $\nu_i^+ \neq \nu_i^-$. There exist $2^n$ underlying quasi-parabolic structures for each connection, according to the choice of $\epsilon=(\epsilon_1,\ldots,\epsilon_n)$ giving rise to $2^n$ forgetful maps $\pi_\epsilon\colon (E,\nabla)\mapsto (E,{\bf p}^{\epsilon}(\nabla))$.

In Section \ref{sect:parbun} we study the quasi-parabolic bundles $(E,{\bf p})$ over $(C,D)$ that admit a connection $\nabla$ with prescribed trace and eigenvalues, compatible with parabolic directions. The major difference from the case $n=2$ investigated in \cite{FL} is that, when $n$ is odd, there exist pairs $(E,\nabla)$ such that all the underlying quasi-parabolic bundles $(E,{\bf p}^{\epsilon}(\nabla))$ are not $\mu$-semistable for any choice of weights; it occurs for the item \ref{deE0} of Lemma \ref{lem:indec}. Following the study of stability, we describe a wall-crossing phenomenon in Lemma \ref{lem:morph} and Lemma \ref{lem:boundary}. 

In Section \ref{sect:logcon} we study the logarithmic connections. We investigate $\contotal$ via the forgetful map to an underlying quasi-parabolic structure. We are especially concerned with the $\mu$-stability of these quasi-parabolic bundles. It turns out that there exists an open subset of $\contotal$ where the underlying vector bundle is $E_1$, the unique indecomposable vector bundle of degree one, with given determinant.
We call this open subset  ${\rm Con}^{\nu}$ and consider the map
\begin{align*}
\Par \colon {\rm Con}^{\nu} &\longrightarrow {{(\mathbb{P}^1 \times \mathbb{P}^1)\times\cdots \times (\mathbb{P}^1 \times \mathbb{P}^1)} }\\
(E,\nabla)&\longmapsto \left(\ p_1^+(\nabla),p_1^-(\nabla),\ldots,p_n^+(\nabla),p_n^-(\nabla)\ \right)
\end{align*}
that associates to each connection all its residual eigenspaces (i.e. with respect to all eigenvalues $\nu_i^+$ and $\nu_i^-$). The image of $\Par$ is contained in $S^n$, where $S$ is the complement of the diagonal in $\mathbb{P}^1 \times \mathbb{P}^1$. We show that this map is an isomorphism (cf. Theorem \ref{thm:parconSn}).


We also study the open subset $\contotal_{st}	\subset \contotal$ formed by pairs $(E,\nabla)$ which admit a $\mu$-stable parabolic bundle $(E,{\bf p}^\epsilon(\nabla))$ for some $\epsilon$ and some weight vector $\mu$. We call 
\[
\ZZ_n = \contotal\setminus\contotal_{st}
\] 
its complement. We describe $\ZZ_n$ in Theorem \ref{thm:dessigma}, it is empty for $n$ even and has four irreducible components which are isomorphic to $\mathbb C^n$ for $n$ odd. Assuming $\nu_{i}^{+}-\nu_{i}^{-}\notin \{0, 1, -1\}$ for $i\in\{1, \cdots, n\}$, we see that $\contotal_{st}$ admits an open covering given by open subsets isomorphic to $S^n$. It leads to the following characterization of $\contotal_{st}$ (cf. Theorem \ref{thm:glue}):

\begin{thmintro}{A}\label{intro:glue} 
	Assume that $\nu_{i}^{+}-\nu_{i}^{-}\notin \{0, 1, -1\}$ for $i\in\{1, \cdots, n\}$. Then $\contotal_{st}$ is obtained by gluing a finite number of copies of $S^n$ via birational maps
	\[
	\Psi_{J,I}^{\delta,\epsilon}\colon S^n \dashrightarrow S^n.
	\]
	Moreover, if $\epsilon = \delta$, then $\Psi_{J,I}^{\delta,\epsilon}$ preserves the fibers of $\pi_\epsilon$. 
\end{thmintro}

In the remainder of our work, we do a finer analysis of the open subset ${\rm Con}^{\nu}$. Let ${\rm Bun}$ denote the moduli space of parabolic vector bundles whose underling vector bundle is $E_1$; it is isomorphic to $(\mathbb P^1)^n$. 
In Section \ref{sect:fuchssyst}, we describe the affine bundle ${\rm Con}^\nu \rightarrow {\rm Bun}$ via Fuchsian systems; this construction yields a vector bundle $\mathcal{E}$ whose projectivization compactifies ${\rm Con}^\nu$. Although $\mathbb{P}(\mathcal{E})$ does not depend on the eigenvalues, the boundary divisor is determined by $\nu_i = \nu_i^+ - \nu_i^-$, $i=1, \dots, n$ (cf. Theorem \ref{thm:extension}):

\begin{thmintro}{B}\label{intro:extension}
The moduli space ${\rm Con}^\nu$ has compactification  $\overline{{\rm Con}^{\nu}}=\mathbb{P}(\mathcal{E})$, where the boundary divisor is isomorphic to $\mathbb{P}\Hig$, the projectivization of the space of Higgs fields on $E_1$. Moreover, the inclusion $\mathbb{P}\Hig\hookrightarrow \mathbb{P}(\mathcal{E})$ is determined, up to automorphisms of $\mathbb{P}(\mathcal{E})$, by $\left(\nu_{1},\dots, \nu_{n}\right)$.
\end{thmintro}

In Section \ref{sect:symp}, we deal with the symplectic structure of the moduli space and compute the explicit expression in the main chart ${\rm Con}^{\nu}\simeq S^n$, see \eqref{eq:Darboux0}. We show that $\left(\nu_{1},\dots, \nu_{n}\right)$ is detected by the symplectic structure (cf. Corollary \ref{cor:symptorelli}): 

\begin{thmintro}{C}\label{thm:D}	
If there exists a fiber preserving symplectic isomorphism
	\[
	\begin{tikzcd}
		\left( {\rm Con}^{\nu},\omega\right) \arrow[d, "\pi_{+}"] \arrow[r,"\Phi", "\sim"'] & \left( {\rm Con}^{\tilde{\nu}},\tilde{\omega}\right) \arrow[d, "\pi_{+}"] \\ {\rm Bun} \arrow[r, "\phi", "\sim"']& {\rm Bun}
	\end{tikzcd}
	\]
	then there exists a permutation $\sigma$ of $n$ elements such that $\tilde{\nu}_k = \nu_{\sigma(k)}$ for every $k\in\{1,\cdots,n\}$.
\end{thmintro}

In Theorem \ref{thm:D}, we have a fixed underlying space of parabolic bundles, ${\rm Bun}$, and we can recover $\left(\nu_{1},\dots, \nu_{n}\right)$ from the symplectic structure. The main result of \cite{FJ} is a Torelli type result which asserts that the moduli space of parabolic bundles determines the punctured curve $(C,D)$. We wonder if the same is true for ${\rm Con}^{\nu}$, or $\contotal$. This belief is based on some results in the literature, see \cite{Seb, BN, BM}.

In Section \ref{sect:appmap}, we conclude the paper by studying the Apparent map. A global section of $E_1$ plays the role of a cyclic vector for connection $\nabla$, which yields a second order ODE on $C$; the map ${\rm App}$ assigns to $\nabla$ the apparent singular points of this equation, see \cite{LS}. It leads to an interesting result about the birational geometry of $\overline{ {\rm Con}^{\nu}}$ (Theorem \ref{thm:apparentbirat}):

\begin{thmintro}{D}
The map $\pi_+\times \operatorname{App}$ induces a birational map
\[
\pi_+\times \operatorname{App} \colon \overline{\rm{Con}^{\nu}} \dashrightarrow {\rm Bun}\times |\mathcal O_C(w_{\infty}+D)|
\]
whose indeterminacy locus is contained in $\overline{\rm{Con}^{\nu}} \backslash \rm{Con}^{\nu}$. Moreover, given $(E,{\bf{p}})\in {\rm Bun}$, the rank of
\[
\left.(\pi_+\times \operatorname{App})\right|_{\pi_+^{-1}(E,{\bf{p}})}\colon \pi_+^{-1}(E,{\bf{p}}) \longrightarrow { \{(E,{\bf{p}})\}\times} |\mathcal O_C(w_{\infty}+D)|
\]
coincides with the cardinality of the set $\{i\mid p_i\not\subset \mathcal O_{C} \}$.
\end{thmintro}

Logarithmic connections on elliptic curves have being investigated by several authors. First, they were studied in the works of Okamoto and later by Kawai in \cite{kawai2003} in relation with isomonodromic deformations. In particular, the symplectic form of \cite[Theorem 1]{kawai2003}, when restricted to a fixed punctured curve $(C,D)$, must coincide to that one we give in \eqref{eq:Darboux0}. However, as \cite{kawai2003} is dealing with analytic differential equations written in terms of Weierstrass zeta functions, it is not so easy to relate the computations with ours. More recently, our moduli space has been considered and proved to be related to some moduli spaces of logarithmic connections on $\mathbb P^1$ for the special cases $n=1$ (see \cite{Lame}),  and $n=2$ with special eigenvalues (see \cite{FrankValente}).
We expect that our explicit algebraic approach will allow us in a forthcoming work to relate
with Kawai's parameters and provide an algebraic expression of isomonodromy equation in our context.

\begin{remark}[Notation and convention]\label{rem:notation}
Throughout the text $C$ will denote a genus one curve and $D=t_1+\dots +t_n$ will be a reduced divisor on $C$.
Let $w_\infty \in C$ be a point and let $w_0,w_1$ and $w_\lambda$ be the torsion points of the elliptic curve $(C, w_\infty)$. In the construction of $\contotal$, we will assume (without loss of generality) that the determinant is $\mathcal{O}_C(w_\infty)$ and that $D+ w_0 + w_1+w_\lambda$ is reduced.
\end{remark} 

\subsection*{Aknowledgements.} We warmly thank the anonymous referee for many useful comments and suggestions.

\section{Parabolic vector bundles}\label{sect:parbun}
Let $C$ be an elliptic curve with $w_\infty \in C$ being its distinguished point. A rank two quasi-parabolic vector bundle $ (E, {\bf p})$ on $\left(C, D\right)$, $D=t_1+\dots +t_n$, consists of a holomorphic vector bundle $E$ of rank two on $C$ and a collection ${\bf p} = \{p_{1}, \dots , p_n\}$ of $1$-dimensional linear subspaces $p_{i} \subset E_{t_{i}}$. We refer to the points $t_i$ as parabolic points, and to the subspace $p_{i} \subset E_{t_{i}}$ as the parabolic direction of $E$ at $t_i$. 

A triple $(E,{\bf p}; \mu)$ of a quasi-parabolic vector bundle and an $n$-tuple $\mu = (\mu_1,\dots, \mu_ n)$ of real numbers in the interval $(0,1)$ is called parabolic vector bundle of rank two. We often write $(E,{\bf p})$ for a parabolic vector bundle when the choice of the weight $\mu$ is clear. 

Let $(E,{\bf p}; \mu)$ be a parabolic vector bundle and let $L\subset E$ be a line subbundle then we define 
\[
{\rm Stab}_{\mu}(L):= \deg E - 2 \deg L + \sum_{p_{k}\neq L_{t_{k}}} \mu_{k} - \sum_{p_{k}= L_{t_{k}}} \mu_{k}.
\]
We say that $(E,{\bf p}; \mu)$ is semistable if ${\rm Stab}_{\mu}(L)\geq 0$ holds for every $L\subset E$. It is stable if the strict inequality holds for every line subbundle $L\subset E$. 
We call ${\rm Stab}_{\mu}(L)$ the parabolic stability of $L\subset E$ with respect to $\mu$. 

We denote by $\operatorname{Bun}_{w_{\infty}}^{\mu}$ the moduli space of semistable parabolic vector bundles $(E,{\bf p}; \mu)$ on $(C,D)$ with $\det E = \mathcal O_C(w_{\infty})$. In this case, either $E\simeq L\oplus L^{-1}(w_{\infty})$ or $E\simeq E_{1}$, where $E_1$ is the unique non trivial extension 
\[
0 \longrightarrow \mathcal O_C \longrightarrow E_{1} \longrightarrow \mathcal O_C(w_{\infty}) \longrightarrow 0.
\]

If there exists $L\subset E$ such that ${\rm Stab}_{\mu}(L)$ is zero then the weights lie on the hyperplane 
\[
H(d,I):=\left\{\mu \mathrel{\Big|} 1-2d + \sum_{k\notin I} \mu_{k} - \sum_{k\in I}\mu_k = 0\right\}
\]
where $d=\deg L$ and $I\subset \{1,\dots , n\}$ denotes the set of indices of those parabolic directions $p_k\subset L_{t_k}$. A connected component of the complement in $(0,1)^n$ of all these hyperplanes $H(d,I)$ is called a chamber. If $\mu$ and $\tilde{\mu}$ belong to the same chamber then $\operatorname{Bun}_{w_{\infty}}^{\mu}=\operatorname{Bun}_{w_{\infty}}^{\tilde{\mu}}$, see for example \cite{MS} or \cite[Lemma 2.7]{BH}.

In the next result, we define an interesting chamber; the underlying vector bundle is fixed and the corresponding moduli space is a product of projective lines.

\begin{prop}\label{prop:1stchamb}The following assertions hold:
\begin{enumerate}
\item\label{Fstat1} The set $\mathfrak{C}:=\{\mu\in (0,1)^n \mid \sum_{k=1}^n \mu_k < 1\}$ is a chamber. 
\item\label{Fstat2} If $\mu\in \mathfrak{C}$ then $\operatorname{Bun}_{w_{\infty}}^{\mu} = \{ (E,{\bf p})\mid E=E_1\}$. Moreover, it is isomorphic to $(\mathbb P^1)^n$.
\end{enumerate}
\end{prop}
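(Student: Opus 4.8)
The plan is to treat the two assertions in turn, with the combinatorics of the walls $H(d,I)$ in part \ref{Fstat1} feeding directly into the stability analysis of part \ref{Fstat2}.

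For \ref{Fstat1}, I would first record that the only wall touching the boundary of $\mathfrak C$ is $H(0,\{1,\dots,n\})$, whose defining equation $1+\sum_{k\notin I}\mu_k-\sum_{k\in I}\mu_k=0$ reduces precisely to $\sum_k\mu_k=1$. The core step is then to check that no wall $H(d,I)$ meets $\mathfrak C$, which I would do by evaluating the sign of the affine functional $f_{d,I}(\mu):=1-2d+\sum_{k\notin I}\mu_k-\sum_{k\in I}\mu_k$ on $\mathfrak C$ and splitting into the cases $d\le 0$ and $d\ge 1$. Using only $0<\mu_k<1$ together with the defining inequality $\sum_k\mu_k<1$, one gets $f_{d,I}>0$ when $d\le 0$ and $f_{d,I}<0$ when $d\ge 1$, so $f_{d,I}$ never vanishes on $\mathfrak C$. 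Since $\mathfrak C$ is convex, hence connected, it lies in a single chamber; to see that it is a \emph{full} chamber I would argue by maximality, observing that any path leaving $\mathfrak C$ inside $(0,1)^n$ must cross $\{\sum_k\mu_k=1\}=H(0,\{1,\dots,n\})$, so $\mathfrak C$ is exactly a connected component of the wall complement.

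For \ref{Fstat2}, I would invoke the dichotomy $E\simeq L\oplus L^{-1}(w_\infty)$ or $E\simeq E_1$ recalled above, the decisive input being the degrees of line subbundles. In the decomposable case one summand has degree $\ge 1$ (the two summand degrees are integers summing to $1$, hence distinct), and evaluating $\mathrm{Stab}_\mu$ on that summand gives a strictly negative value on $\mathfrak C$ for \emph{every} parabolic structure, so no decomposable bundle is $\mu$-semistable. In the case $E=E_1$, every line subbundle $L$ satisfies $\deg L\le 0$: using the sequence $0\to\mathcal O_C\to E_1\to\mathcal O_C(w_\infty)\to 0$, a subbundle either factors through $\mathcal O_C$, forcing $\deg L\le 0$, or maps nonzero to $\mathcal O_C(w_\infty)$, where $\deg L=1$ would split the sequence. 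Then $\mathrm{Stab}_\mu(L)\ge 1-\sum_k\mu_k>0$ on $\mathfrak C$, so every $(E_1,\mathbf p)$ is $\mu$-stable and $\operatorname{Bun}^\mu_{w_\infty}=\{(E,\mathbf p)\mid E=E_1\}$.

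Finally, to identify this set with $(\mathbb P^1)^n$, I would parametrize the parabolic structures on the fixed bundle $E_1$ by $\prod_{i=1}^n\mathbb P\big((E_1)_{t_i}\big)\cong(\mathbb P^1)^n$, and observe that since $E_1$ has coprime rank and degree it is simple, so $\mathrm{Aut}(E_1)=\mathbb C^*$ acts by scalars and hence trivially on each projectivized fibre. Therefore distinct tuples $\mathbf p$ give non-isomorphic parabolic bundles, and the universal family over $(\mathbb P^1)^n$ exhibits the moduli space itself as $(\mathbb P^1)^n$. I expect the main obstacle to be this last identification at the level of schemes rather than sets: one must know that $E_1$ is rigid and simple, so that there are no hidden automorphisms or deformations of the underlying bundle, whereas the wall computations in \ref{Fstat1} and the stability estimates in \ref{Fstat2} reduce to elementary sign checks once the functional $f_{d,I}$ and the degree bounds are in place.
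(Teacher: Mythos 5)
Your proposal is correct and follows essentially the same route as the paper: rule out decomposable $E$ by the negativity of $\mathrm{Stab}_\mu$ on the degree $\geq 1$ summand, and identify the remaining locus with $(\mathbb P^1)^n$. You simply write out the details the paper leaves implicit — the sign check $f_{d,I}\neq 0$ on $\mathfrak C$, the maximality argument showing $\mathfrak C$ is a full component, the stability of every $(E_1,\mathbf p)$ via $\deg L\le 0$ for $L\subset E_1$, and the action of $\mathrm{Aut}(E_1)=\mathbb C^*$ — all of which are accurate.
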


\begin{proof}
Note that $\mathfrak{C}$ is convex, hence connected. Then \eqref{Fstat1} follows from proving that $\mathfrak{C}$ does not intersect any wall $H(d,I)$. This is straightforward and we leave it to the reader.

Now we prove \eqref{Fstat2}. Recall that $\det E = \mathcal{O}_C(w_\infty)$ implies that either $E=E_1$ or $E= L\oplus L^{-1}(w_\infty)$ with $\deg L \geq 1$. For the later we have
\[
{\rm Stab}_{\mu}(L) = 1-2\deg L + \sum_{p_{k}\neq L_{t_{k}}} \mu_{k} - \sum_{p_{k}= L_{t_{k}}} \mu_{k} \leq 1-2\deg L + \sum_{k=1}^n \mu_k  < 2-2\deg L \leq 0
\]
for any $\mu \in \mathfrak{C}$, hence $E$ cannot be $\mu$-semistable. Hence $E = E_1$ and each parabolic bundle is completely determined by
\[
(p_1,\dots, p_n) \in \mathbb{P}\left(E_1|_{t_1}\right)\times \dots \times \mathbb{P}\left(E_1|_{t_n}\right) \simeq (\mathbb P^1)^n.
\]
Thus we get the desired isomorphism.
\end{proof}

For a weight vector $\mu = (\mu_{1}, \dots, \mu_{n})\in (0,1)^n$ and a subset $I\subset \{1, \dots, n\}$ of even cardinality, 
we consider the map $\varphi_I\colon (0,1)^n\longrightarrow(0,1)^n$ defined by
\[
\varphi_I(\mu) := (\mu'_{1}, \dots, \mu'_{n})\in (0,1)^n
\]
where $\mu'_i=\mu_i$ if $i\not\in I$, and $\mu'_i=1-\mu_i$ if $i\in I$. Note that $\varphi_I$ is continuous and preserves the walls $H(d,J)$. Then the image of $\mathfrak{C}$ by $\varphi_I$ yields a new chamber 
\begin{eqnarray}\label{CI}
{\mathfrak{C}}_I:=\left\{\mu\in (0,1)^n \mathrel{\Big|} \sum_{k\notin I} \mu_{k} - \sum_{k\in I} \mu_k + |I|<1\right\}
\end{eqnarray}
where $|I|$ is the cardinality of $I$. When $I=\emptyset$ then ${\mathfrak{C}}_I = \mathfrak{C}$.

Each $\varphi_I$ admits a modular realization as an elementary transformation, which we now describe. Consider the following exact sequence of sheaves
\[
0 \longrightarrow E' \stackrel{\alpha}{\longrightarrow} E \stackrel{\beta}{\longrightarrow} \bigoplus_{i\in I}(E_{t_i}/p_i)\longrightarrow 0 
\]
where for each (local) section $s$ of $E$ we define $\beta(s) = (\beta_1(s), \dots, \beta_n(s))$ by $\beta_j(s) = s(t_j) \pmod{p_j}$ if $s$ is defined at $t_j$, and $\beta_j(s)=0$ otherwise.
Then $E'$ is a vector bundle of rank two such that
\[
\det E' = \det E \otimes \mathcal O_{C}\left(-\sum_{i\in I}t_i\right).
\]
In particular, $E'$ has degree $1-|I|$. We define a natural quasi-parabolic structure for $E'$ as follows. If $i\not\in I$ then $\alpha_{t_i}\colon E'_{t_i}\longrightarrow E_{t_i}$ is an isomorphism and 
\[
p_i'=(\alpha_{t_i})^{-1} (p_i)\subset E'_{t_i}
\]
is the parabolic direction at $t_i$. If $i\in I$ we define $p_i'=\ker(\alpha_{t_i})$ as the parabolic direction at $t_i$. 
This operation corresponds to the birational transformation of ruled surfaces $\mathbb P(E)\dashrightarrow \mathbb P(E')$ obtained
by blowing-up the points $p_i\in\mathbb{P}(E_{t_i})$ and then blowing-down the strict transforms of the fibers $\mathbb{P}(E_{t_i})$ to the points $p'_i\in\mathbb{P}(E'_{t_i})$, $i\in I$. This is well-defined since the $p_i$ lie on different fibers. 

\begin{figure}[ht]\label{fig:elemtransf}

\centering	
\definecolor{color_29791}{rgb}{0,0,0}
\definecolor{color_274846}{rgb}{1,0,0}
\begin{tikzpicture}[line cap=round,line join=round,>=triangle 45]
	\path(-5pt,-130pt) node {$\displaystyle\mathbb{P}(E)$};
	\path(300pt,-130pt) node {$\displaystyle\mathbb{P}(E')$};
	\draw[color_29791,line width=1.363616pt]
	(11.24959pt, -154.1211pt) -- (86.24838pt, -154.1211pt)
	;
	\draw[color_29791,line width=1.499977pt]
	(18.74946pt, -161.621pt) -- (18.74946pt, -94.12207pt)
	;
	\draw[color_29791,line width=1.499977pt]
	(33.74922pt, -161.621pt) -- (33.74922pt, -94.12207pt)
	;
	\draw[color_29791,line width=1.499977pt]
	(63.74874pt, -161.621pt) -- (63.74874pt, -94.12207pt)
	;
	\draw[color_29791,line width=1.499977pt]
	(78.74849pt, -161.621pt) -- (78.74849pt, -94.12207pt)
	;
	\draw[color_29791,line width=1.363616pt]
	(206.2464pt, -154.1211pt) -- (281.2452pt, -154.1211pt)
	;
	\draw[color_29791,line width=1.499977pt]
	(213.7463pt, -161.621pt) -- (213.7463pt, -94.12207pt)
	;
	\filldraw[color_274846][nonzero rule]
	(228.7461pt, -161.621pt) -- (228.7461pt, -94.12207pt)
	;
	\draw[color_274846,line width=1.499977pt]
	(228.7461pt, -161.621pt) -- (228.7461pt, -94.12207pt)
	;
	\draw[color_29791,line width=1.499977pt]
	(273.7453pt, -161.621pt) -- (273.7453pt, -94.12207pt)
	;
	\draw[color_29791,line width=1.363616pt]
	(108.748pt, -101.6219pt) -- (183.7468pt, -101.6219pt)
	;
	\draw[color_29791,line width=1.499977pt]
	(116.2479pt, -109.1218pt) -- (116.2479pt, -41.62292pt)
	;
	\draw[color_29791,line width=1.499977pt]
	(176.2469pt, -109.1218pt) -- (176.2469pt, -41.62292pt)
	;
	\draw[color_29791,line width=1.499977pt]
	(131.2477pt, -109.1218pt) .. controls (132.5732pt, -94.12207pt) and (137.7363pt, -79.12231pt) .. (123.7478pt, -64.12256pt)
	;
	\draw[color_274846,line width=1.499977pt]
	(132.0052pt, -41.62292pt) .. controls (136.2878pt, -81.14935pt) and (129.827pt, -72.80084pt) .. (123.7478pt, -79.12231pt)
	;
	\draw[color_274846,line width=0.749987pt]
	(29.99928pt, -120.3717pt) -- (37.49916pt, -127.8716pt)
	;
	\draw[color_274846,line width=0.749987pt]
	(37.49916pt, -120.3717pt) -- (29.99928pt, -127.8716pt)
	;
	\draw[color_29791,line width=0.749987pt]
	(224.9961pt, -150.3712pt) -- (232.496pt, -157.8711pt)
	;
	\draw[color_29791,line width=0.749987pt]
	(232.496pt, -150.3712pt) -- (224.9961pt, -157.8711pt)
	;
	\draw[color_29791,line width=0.717154pt,->]
	(104.9981pt, -67.8725pt) .. controls (100.4534pt, -67.8725pt) and (95.90799pt, -67.8725pt) .. (91.36235pt, -69.12244pt) .. controls (86.8167pt, -70.37238pt) and (82.27201pt, -72.87195pt) .. (78.29442pt, -75.99713pt) .. controls (74.31682pt, -79.12225pt) and (70.9084pt, -82.87152pt) .. (67.49869pt, -86.62219pt)
	;
	\draw[color_29791,line width=0.734581pt, ->]
	(185.5486pt, -67.8725pt) .. controls (190.3294pt, -67.8725pt) and (195.1109pt, -67.8725pt) .. (199.8927pt, -69.12244pt) .. controls (204.6745pt, -70.37238pt) and (209.4553pt, -72.87195pt) .. (213.6396pt, -75.99713pt) .. controls (217.8238pt, -79.12225pt) and (221.4093pt, -82.87152pt) .. (224.9962pt, -86.62219pt)
	;
	\draw[color_29791,line width=0.722823pt,dash pattern=on 3.1348383pt off 3.1348383pt, ->]
	(97.4982pt, -146.6212pt) -- (194.9966pt, -146.6212pt)
	;
	\draw[color_29791,line width=1.499977pt]
	(146.2474pt, -109.1218pt) .. controls (138.7475pt, -94.12207pt) and (153.7473pt, -90.37213pt) .. (142.4975pt, -64.12256pt)
	;
	\draw[color_274846,line width=1.499977pt]
	(147.005pt, -41.62292pt) .. controls (153.7473pt, -82.87225pt) and (146.2474pt, -75.37238pt) .. (142.4975pt, -86.62219pt)
	;
	\filldraw[color_274846][nonzero rule]
	(243.7458pt, -161.621pt) -- (243.7458pt, -94.12207pt)
	;
	\draw[color_274846,line width=1.499977pt]
	(243.7458pt, -161.621pt) -- (243.7458pt, -94.12207pt)
	;
	\draw[color_29791,line width=0.749987pt]
	(239.9959pt, -150.3712pt)-- (247.4958pt, -157.8711pt)
	;
	\draw[color_29791,line width=0.749987pt]
	(247.4958pt, -150.3712pt) -- (239.9959pt, -157.8711pt)
	;
	\draw[color_29791,line width=1.499977pt]
	(48.74899pt, -161.621pt) -- (48.74899pt, -94.12207pt)
	;
	\draw[color_274846,line width=0.749987pt]
	(44.99904pt, -135.3715pt) -- (52.49892pt, -142.8713pt)
	;
	\draw[color_274846,line width=0.749987pt]
	(52.49892pt, -135.3715pt) -- (44.99904pt, -142.8713pt)
	;
	\draw[color_274846,line width=0.749987pt]
	(59.99879pt, -150.3712pt) -- (67.49867pt, -157.8711pt)
	;
	\draw[color_274846,line width=0.749987pt]
	(67.49867pt, -150.3712pt) -- (59.99879pt, -157.8711pt)
	;
	\draw[color_29791,line width=0.749987pt]
	(254.9956pt, -127.8715pt) -- (262.4955pt, -135.3714pt)
	;
	\draw[color_29791,line width=0.749987pt]
	(262.4955pt, -127.8715pt) -- (254.9956pt, -135.3714pt)
	;
	\filldraw[color_274846][nonzero rule]
	(258.7456pt, -161.621pt) -- (258.7456pt, -94.12207pt)
	;
	\draw[color_274846,line width=1.499977pt]
	(258.7456pt, -161.621pt) -- (258.7456pt, -94.12207pt)
	;
	\draw[color_29791,line width=1.499977pt]
	(164.9971pt, -41.62292pt) .. controls (172.497pt, -56.62268pt) and (157.4972pt, -60.37262pt) .. (168.7471pt, -86.62219pt)
	;
	\draw[color_274846,line width=1.499977pt]
	(164.2395pt, -109.1218pt) .. controls (157.4972pt, -67.8725pt) and (164.9971pt, -75.37238pt) .. (168.747pt, -64.12256pt)
	;
\end{tikzpicture}
	\caption{Elementary transformation}
\end{figure}
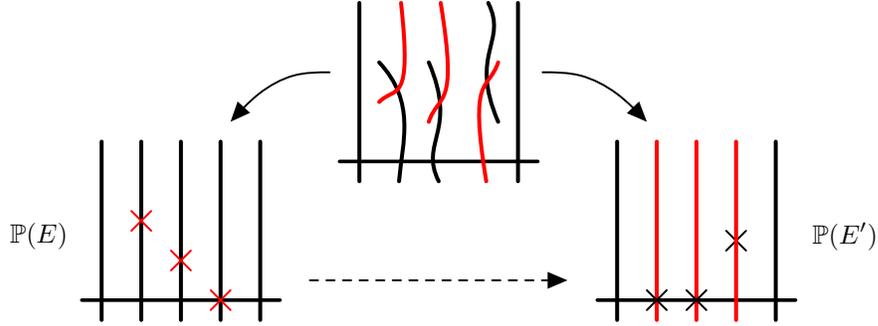

Since $|I|$ is even, we can fix a square root $L_0$ of the line bundle $\mathcal O_C\left(\sum_{i\in I}t_i\right)$, i.e.
\[
L_0^2 = \mathcal O_C\!\left(\sum_{i\in I}t_i\right). 
\] 
This gives a correspondence 
\[
\elm_I\colon \left(E, {\bf p}\right) \longmapsto \left(E'\otimes L_0, {\bf p'}\right)
\]
between quasi-parabolic vector bundles on $\left(C,D\right)$ which have $\mathcal O_C(w_{\infty})$ as determinant line bundle.

The reader can check that if $\left(E, {\bf p}\right)$ is semistable with respect to $\mu$, then $ \elm_I\left(E, {\bf p}\right)$ is semistable with respect to $\varphi_I(\mu)$. 
We conclude that the correspondence $\elm_I$ defines an isomorphism between moduli spaces 
\[
\elm_I\colon\operatorname{Bun}_{w_{\infty}}^{\mu} \longrightarrow \operatorname{Bun}_{w_{\infty}}^{\varphi_I(\mu)}.
\]

\begin{definition}\label{def:bunI}
Given $I\subset \{1, \dots, n\}$ of even cardinality, let $\mu\in \mathfrak{C}_I$. We will denote 
\[
{\rm Bun}^I = \operatorname{Bun}_{w_{\infty}}^{\mu}.
\]
When $I$ is the empty set, we write simply ${\rm Bun}$ instead of ${\rm Bun}^{\emptyset}$; it corresponds to the moduli space of parabolic vector bundles whose underlying vector bundle is $E_1$.
\end{definition}

\begin{remark}\label{rem:otherchamb}
From Proposition \ref{prop:1stchamb}, we conclude that ${\rm Bun}^I\simeq (\mathbb P^1)^n$ for any $I\subset \{1, \dots, n\}$ of even cardinality. 
\end{remark}

A quasi-parabolic vector bundle $(E, {\bf p})$ is called decomposable if there exist $(L, {\bf p'})$ and $(M, {\bf p''})$ such that $(E, {\bf p}) \simeq (L, {\bf p'})\oplus (M, {\bf p''})$ as quasi-parabolic vector bundles. Otherwise it is called indecomposable. Note that $(E, {\bf p})$ can be indecomposable with $E$ decomposable as a vector bundle.

\begin{lemma}\label{lem:indec}
Let $(E, {\bf p})$ be a rank two indecomposable quasi-parabolic bundle, over $(C,D)$, with $\det E = \mathcal{O}_C(w_\infty)$. Then one of the following holds:
\begin{enumerate}
\item $E$ is indecomposable, i.e. $E = E_1$;
\item $E = L \oplus L^{-1}(w_\infty)$ and $2 \leq 2 \deg L \leq n$;
\item\label{deE0} $E = L \oplus L^{-1}(w_\infty)$ with $L^2 = \mathcal{O}_C(D + w_\infty)$, hence $2 \deg L = n+1$. Moreover, every parabolic direction lies on $L^{-1}(w_\infty)$ except for one that lies outside both subbundles.
\end{enumerate}
\end{lemma}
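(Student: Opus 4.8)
The plan is to separate according to whether the underlying bundle $E$ is indecomposable. If $E$ is indecomposable, then since $\det E=\mathcal O_C(w_\infty)$ has degree one, the classification recalled before Proposition~\ref{prop:1stchamb} forces $E=E_1$, which is item (1); nothing else is needed. So I assume $E$ decomposes and write $E=L\oplus M$ with $\deg L\ge\deg M$. From $\deg L+\deg M=\deg\det E=1$ and integrality I get $\deg L\ge 1>\deg M=1-\deg L$, hence $M\cong L^{-1}(w_\infty)$. A degree count shows $L$ is the \emph{unique} line subbundle of maximal degree: any subbundle of degree $\deg L$ whose projection to $M$ were nonzero would admit a nonzero map to $M$, hence have degree $\le\deg M<\deg L$, so it must sit inside $L$ and equal it. Consequently every decomposition $E=A\oplus B$ has one summand equal to $L$, the other being a complement of $L$, i.e. the graph of some $\sigma\in\operatorname{Hom}(M,L)=H^0(L^2(-w_\infty))$.

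Next I reformulate indecomposability of $(E,{\bf p})$ as an interpolation problem. Fixing the splitting $E=L\oplus M$, record for each $i$ the ``slope'' $c_i$ of $p_i$ (finite when $p_i\ne L_{t_i}$, equal to $0$ exactly when $p_i=M_{t_i}$), and set $J=\{i: p_i\ne L_{t_i}\}$, $k=|J|$. The direction $p_i$ lies on the complement $B_\sigma$ exactly when $\sigma(t_i)=c_i$, so $(E,{\bf p})$ is decomposable if and only if some $\sigma\in H^0(L^2(-w_\infty))$ interpolates the values $(c_i)_{i\in J}$. Now $\deg L^2(-w_\infty)=2\deg L-1$, and if $k\le 2\deg L-2$ then $L^2(-w_\infty-\sum_{i\in J}t_i)$ has degree $\ge 1$; by Riemann--Roch on $C$ the points $\{t_i\}_{i\in J}$ impose independent conditions, the evaluation map is onto, and the interpolation is always solvable — forcing decomposability. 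Hence indecomposability gives $k\ge 2\deg L-1$, and since $k\le n$ we obtain $2\le 2\deg L\le n+1$. When $2\deg L\le n$ this is item (2); the remaining case is $2\deg L=n+1$.

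In the boundary case $2\deg L=n+1$ I must also pin down the parabolic configuration. Here $k\ge n$ forces $k=n$, so no $p_i$ lies on $L$, and $L^2(-w_\infty)$ has degree $n$. The evaluation $H^0(L^2(-w_\infty))\to\bigoplus_{i=1}^n(L^2(-w_\infty))_{t_i}$ has kernel $H^0(L^2(-w_\infty-D))$, whose bundle has degree $0$; if $L^2\ne\mathcal O_C(D+w_\infty)$ this kernel vanishes, the evaluation is an isomorphism, and every configuration is decomposable, a contradiction. Thus $L^2=\mathcal O_C(D+w_\infty)$, the image $W$ of the evaluation is a hyperplane, and indecomposability means precisely $(c_i)\notin W$. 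Finally, to produce the normal form I interpolate at the $n-1$ points $\{t_i\}_{i\ne i_0}$: since $L^2(-w_\infty-\sum_{i\ne i_0}t_i)$ has degree $1$, these impose independent conditions and there is $\sigma_0$ with $\sigma_0(t_i)=c_i$ for $i\ne i_0$; replacing $M$ by the complement $B_{\sigma_0}$ (still isomorphic to $L^{-1}(w_\infty)$) puts all $p_i$, $i\ne i_0$, on $M$. Indecomposability then prevents $p_{i_0}$ from also lying on $M$, so $p_{i_0}$ is off both subbundles, which is item (3). The main obstacle is this last step: translating the purely cohomological condition ``$(c_i)\notin W$'' into the concrete geometric normal form, which is exactly what the choice of complement $B_{\sigma_0}$ and the one leftover interpolation condition accomplish.
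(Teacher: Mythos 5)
Your proof is correct and follows essentially the same route as the paper's: both reduce decomposability to whether a section of $L^2(-w_\infty)$ (an off-diagonal endomorphism of $E$, equivalently the graph of a complement of $L$) can interpolate the prescribed directions at the $t_j$, and both settle this by the same degree count on the elliptic curve, with $L^2=\mathcal{O}_C(D+w_\infty)$ emerging as the unique obstruction in the boundary case $2\deg L=n+1$. Your write-up is a bit more explicit about the uniqueness of the maximal subbundle $L$ and about directions already lying on $L$, but the substance is identical.
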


\begin{proof}
When $E=E_1$ we have nothing to prove. So suppose that $E = L \oplus L^{-1}(w_\infty)$. Since $L\oplus L^{-1}(w_{\infty})\simeq M\oplus M^{-1}(w_{\infty})$ with $M = L^{-1}(w_{\infty})$ we can assume $\deg L = s\ge 1$.

To decompose $(E, {\bf p})$ we need to find an embedding of $L^{-1}(w_{\infty})$ in $E$ passing through every direction that does not lie on $L$. Note that this is the same as finding an automorphism of $E$ that sends every direction outside $L$ to $(0:1)$. Let $p_j = (u_j: 1)$ denote the parabolic direction over $t_j$ which is outside $L$. Recall that 
\[
{\rm End}(E) = \left\{\begin{pmatrix}
\alpha & \beta \\ 0 & \delta
\end{pmatrix} \mathrel{\Big|} \alpha, \delta \in \mathbb{C} , \, \beta \in {\rm H^0}(C, L^2(-w_\infty))\right\}.
\]
If $2s \geq n+2$ then $h^0(L^2(-w_\infty -D+t_j)) = 2s -n \geq 2$ and we are free to choose $\beta_j$ that vanishes on $t_i$ for $i\neq j$ and such that $\beta_j(t_j) = -u_j$. Thus, choosing $\beta = \sum_{j=1}^n \beta_j$, $\alpha=1$ and $\delta=1$, the corresponding automorphism sends any direction $p_j$ outside $L$ to $(0:1)$. 

Now set $2s = n+1$. By the same argument as above, to show that $(E,{\bf p})$ is decomposable, we need to find a section $\beta_j$ of ${\rm H^0}(C, L^2(-w_\infty))$ that vanishes on $t_i$ for $i\neq j$ and such that $\beta_j(t_j) = -u_j$, for each $j\in\{1,\dots, n\}$. We can find $\beta_j$ as required if $L^2(-w_\infty -D) \neq \mathcal{O}_C$. Indeed, assume $L^2(-w_\infty -D+t_j)\simeq \mathcal O_C(x_j)$ with $x_j\neq t_j$, and take any section $\alpha_j$ of $L^2(-w_\infty -D+t_j)$ with $\alpha_j(t_j)\neq 0$. The desired section is defined as $\beta_j = -\frac{u_j}{\alpha_j(t_j)}\alpha_j$. Hence $(E,{\bf p})$ is decomposable when $L^2(-w_\infty -D) \neq \mathcal{O}_C$. 

If $L^2 = \mathcal{O}_C(D + w_\infty)$ we can apply the same argument for $D-t_1$ instead of $D$ to find an embedding of $L^{-1}(w_{\infty})$ passing through $n-1$ parabolic directions outside $L$. In particular, if $(E,{\bf p})$ is indecomposable then there exists no parabolic direction on $L$ and this finishes the proof. 
\end{proof}

\begin{remark}\label{rem:unstparbun}
The parabolic bundles in the third case of Lemma \ref{lem:indec} have a peculiar property: they are never $\mu$-semistable, whatever is $\mu$. Indeed, for $E = L\oplus L^{-1}(w_{\infty})$ with $2\deg L =n+1$, no parabolic direction lying on $L$, and any weight $\mu$, we have
\[
{\rm Stab}_{\mu}(L)= -n + \sum_{j=1}^n \mu_j < 0.
\]
\end{remark}

We can give a partial converse to this fact. Note that if $E = E_1$ then any quasi-parabolic bundle is stable for $\mu\in \mathfrak{C}$.

\begin{lemma}\label{lem:stparbun}
Let $(E, {\bf p})$ be an indecomposable quasi-parabolic bundle such that $E = L\oplus L^{-1}(w_\infty)$ and $p_j$ lies outside $L$, for every $j$. If $2 \leq 2\deg L \leq n$ then there exists $I \subset \{1, \dots, n\}$, with $|I| =2\deg L$, such that $(E, {\bf p}) \in {\rm Bun}^I$.
\end{lemma}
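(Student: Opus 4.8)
The plan is to manufacture a subset $I$ of size $2\deg L$ by a dimension count tied to indecomposability, and then to verify directly that $(E,{\bf p})$ is $\mu$-stable for the weights $\mu\in\mathfrak{C}_I$; since ${\rm Bun}^I=\operatorname{Bun}_{w_\infty}^{\mu}$ for such $\mu$ by Definition \ref{def:bunI}, this places $(E,{\bf p})$ in ${\rm Bun}^I$. Write $s=\deg L\ge 1$, fix the splitting $E=L\oplus L^{-1}(w_\infty)$, and let $p_k=(u_k:1)$ be the parabolic directions (all outside $L$). The first step is to classify the line subbundles of $E$, exactly as in the proof of Lemma \ref{lem:indec}: any line subbundle is either $L$ itself (degree $s$) or factors through $L^{-1}(w_\infty)$ and hence has degree $\le 1-s$. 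Moreover every subbundle of degree exactly $1-s$ is the graph $M_\beta$ of a homomorphism $\beta\in H^0(L^2(-w_\infty))=\operatorname{Hom}(L^{-1}(w_\infty),L)$, and $p_k\subset M_\beta$ if and only if $\beta(t_k)=u_k$. Throughout I write $I_M=\{k\mid p_k\subset M_{t_k}\}$ for a subbundle $M$, and $I_\beta:=I_{M_\beta}$.

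The key observation is the translation of indecomposability into linear algebra. Consider the evaluation map $\mathrm{ev}\colon H^0(L^2(-w_\infty))\to \mathbb{C}^n$, $\beta\mapsto(\beta(t_1),\dots,\beta(t_n))$ (after trivializing the fibers), and set $V=\operatorname{Im}(\mathrm{ev})$ and $u=(u_1,\dots,u_n)$. A direct-sum splitting of $(E,{\bf p})$ refining $E=L\oplus L^{-1}(w_\infty)$ would require a complement $M_\beta$ carrying all the directions, i.e. $\beta(t_k)=u_k$ for every $k$; hence $(E,{\bf p})$ is indecomposable exactly when $u\notin V$. Since $\deg L^2(-w_\infty)=2s-1\ge 1$ on the elliptic curve, Riemann--Roch gives $\dim V\le h^0(L^2(-w_\infty))=2s-1$.

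Now comes the main point, the existence of $I$. Let $A$ be a matrix whose columns span $V$, and $r=\dim V\le 2s-1$. The condition $u\notin V$ says that $[A\,|\,u]$ has rank $r+1$, so it has $r+1$ linearly independent rows, indexed by a set $I_0$. The submatrix $A_{I_0}$ then has rank $r<r+1$, whence $u|_{I_0}\notin\operatorname{Im}(A_{I_0})=\pi_{I_0}(V)$, where $\pi_{I_0}$ projects to the coordinates in $I_0$. Because $r+1\le 2s\le n$, I can enlarge $I_0$ to a set $I$ with $|I|=2s=2\deg L$, and projecting further preserves $u|_I\notin\pi_I(V)$. Unwinding the definitions, this says exactly that no degree-$(1-s)$ subbundle $M_\beta$ contains all the directions $\{p_k\mid k\in I\}$; equivalently $I\not\subseteq I_\beta$ for every $\beta$. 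The main obstacle of the whole argument is this step: passing from ``$u\notin V$ globally'' to ``$u|_I\notin\pi_I(V)$ on a set of the exact size $2\deg L$'', which is precisely what forces $|I|=2\deg L$ and exploits the sharp bound $\dim V\le 2s-1<2s$.

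It remains to check $\mu$-stability for $\mu\in\mathfrak{C}_I$. The function $\mu\mapsto{\rm Stab}_\mu(M)$ is affine and its zero locus is the wall $H(\deg M,I_M)$, which $\mathfrak{C}_I$ avoids; hence its sign is constant on the connected chamber and can be read off at the vertex $v$ given by $\mu_k=1$ for $k\in I$ and $\mu_k=0$ otherwise, which lies in $\overline{\mathfrak{C}_I}$. For $M=L$ one gets ${\rm Stab}_v(L)=1-2s+2s=1>0$, since $I_L=\emptyset$. For $M=M_\beta$ of degree $1-s$ one computes ${\rm Stab}_v(M_\beta)=(2s-1)+2s-2|I\cap I_\beta|\ge 1$, precisely because $|I\cap I_\beta|\le 2s-1$ by the choice of $I$. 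Finally, for any subbundle $M$ of degree $d\le -s$ one has ${\rm Stab}_v(M)=1-2d+2s-2|I\cap I_M|\ge 1-2d-2s\ge 1$, using $|I\cap I_M|\le|I|=2s$; and there are no subbundles with $1-s<d<s$. Thus ${\rm Stab}_\mu(M)>0$ for every line subbundle $M$ and every $\mu\in\mathfrak{C}_I$, so $(E,{\bf p})$ is $\mu$-stable and therefore defines a point of ${\rm Bun}^I$.
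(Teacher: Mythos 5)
Your proof is correct and follows essentially the same strategy as the paper's: use indecomposability together with the bound $h^0(L^2(-w_\infty))=2\deg L-1$ to produce a set $I$ of cardinality $2\deg L$ through whose directions no embedding of $L^{-1}(w_\infty)$ passes, and then check $\mu$-stability for $\mu\in\mathfrak{C}_I$. The only differences are that you select $I$ by a rank argument on the evaluation map (somewhat more robust than the paper's choice of the first $2\deg L-1$ indices plus one more, which implicitly uses that at most one embedding passes through $2\deg L-1$ given directions) and that you write out in full the stability verification that the paper declares straightforward.
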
 

\begin{proof}
Given that every direction $p_j$ lies outside $L$, we may find an embedding of $L^{-1}(w_{\infty})$ that passes through some of these directions. Any subset of directions with cardinality $2\deg L-1$ admits at most one embedding of $L^{-1}(w_{\infty})$ passing through them. Since $(E, {\bf p})$ is indecomposable, such embedding cannot pass through all $p_j$. In particular, we can find $k \in \{ 2\deg L, \dots, n\} $ such that no embedding of $L^{-1}(w_{\infty})$ passes through the directions indexed by $I = \{ 1, \dots,2\deg L-1 , k\} $. It is straightforward to verify that $(E, {\bf p}) \in {\rm Bun}^I$.
\end{proof}

Until now we have only considered a rank two $E$ and its line subbundles $L \subset E$. But a more general setting will be suitable for the next results; we may allow subsheaves that are not saturated. We will consider general morphisms $L \rightarrow E$ that do not, necessarily, lead to an embedding of $L$ in $E$. Recall that, over a curve, being a subbundle means that there exist an injective morphism $L \hookrightarrow E$ whose cokernel is also a line bundle, i.e. $L$ is a saturated subsheaf of $E$. For a general morphism $\phi\colon L \rightarrow E$ this does not need to be true. However, we can factor out a divisor $Z$ where $\phi$ vanishes, leading to an injective morphism $L(Z) \hookrightarrow E$. For details, see \cite[Chapter 2, Proposition 5]{FRIED}.
On the other hand, given a subbundle $L \subset E$ we can produce a morphism $L(-Z) \rightarrow E$ that vanishes on the fibers over the support of $Z$.

Given a morphism $\phi\colon L\rightarrow E$ we say that its image passes through $p_j\subset E_{t_j}$ if $\phi_{t_j}(L_{t_j})\subset p_j$. 
\begin{lemma}\label{lem:morph}
Let $I\subset \{1,\dots, n\}$ have cardinality $2k+2$ with $k\ge 0$ and fix $\mu\in \mathfrak{C}_I$. Then $(E_1,{\bf p})$ is not $\mu$-semistable if and only if there exists a line bundle $L$ of degree $\deg L=-k$ and a morphism $L\rightarrow E_1$ whose image passes through $p_j$ for all $j\in I$. 
\end{lemma}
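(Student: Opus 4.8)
The plan is to reduce the statement to a purely numerical comparison attached to the line subbundles of $E_1$, and then to translate that comparison back and forth across the morphism/subbundle dictionary recalled just before the lemma. Throughout write $|I|=2k+2$. By definition $(E_1,{\bf p})$ fails to be $\mu$-semistable precisely when some line subbundle $M\subset E_1$ satisfies ${\rm Stab}_\mu(M)<0$, so the whole lemma follows once I (i) characterize which subbundles $M$ destabilize in the chamber $\mathfrak{C}_I$ by a clean numerical inequality, and (ii) match those subbundles with the morphisms $L\rightarrow E_1$ appearing in the statement.

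For step (i), fix a line subbundle $M\subset E_1$, set $d=\deg M$, and let $B=\{\,j\in I\mid p_j\neq M_{t_j}\,\}$ be the set of parabolic indices in $I$ whose direction does not lie on $M$. The claim I will establish is
\[
{\rm Stab}_\mu(M)<0 \quad\Longleftrightarrow\quad |B|\leq d+k \qquad (\mu\in\mathfrak{C}_I).
\]
Here I will use that the sign of the affine function $\mu\mapsto{\rm Stab}_\mu(M)$ is constant on the chamber $\mathfrak{C}_I$, since its zero locus is exactly a wall $H(d,J)$, which is disjoint from the open chamber. It then suffices to read off the sign in a corner of $\mathfrak{C}_I$, for instance along $\mu_j=1-\delta$ for $j\in I$ and $\mu_j=\delta'$ for $j\notin I$ with $\delta,\delta'\to 0^+$ (a path that stays in $\mathfrak{C}_I$ for small parameters). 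Substituting into the definition of ${\rm Stab}_\mu$, and using $\sum_{j\in I}\epsilon_j\mu_j=2\sum_{j\in B}\mu_j-\sum_{j\in I}\mu_j$, one finds ${\rm Stab}_\mu(M)\to 2|B|-2d-2k-1$. As this limit is a nonzero odd integer, it pins down the sign, and the displayed equivalence follows. I expect this step to be the main obstacle: it is where the precise cardinality $|I|=2k+2$ and the shape of the chamber $\mathfrak{C}_I$ are used, and the bookkeeping of the weights over $I$ against those over its complement must be carried out carefully.

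For step (ii) I will use the saturation/vanishing-divisor dictionary recalled before the lemma. Given a nonzero morphism $\phi\colon L\rightarrow E_1$ with $\deg L=-k$ whose image passes through every $p_j$, $j\in I$, I factor it as $L\hookrightarrow L(Z)=M\hookrightarrow E_1$ with $Z\geq 0$ the vanishing divisor; then $\deg Z=d+k$, and for each $j\in B$ the vanishing at $t_j$ is forced, so the $|B|$ distinct points $t_j$ give $|B|\leq\deg Z=d+k$. Conversely, if $M\subset E_1$ satisfies $|B|\leq d+k$ (hence $d+k\geq 0$), I choose an effective divisor $Z$ of degree $d+k$ containing $\{\,t_j\mid j\in B\,\}$ and otherwise supported away from $D$; then $L:=M(-Z)$ has degree $-k$, and the composite $L\hookrightarrow M\hookrightarrow E_1$ has image through each $p_j$, $j\in I$ (at indices in $B$ the map vanishes, while at the remaining indices of $I$ the image equals $M_{t_j}=p_j$). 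Assembling step (i) and step (ii) then gives the lemma in both directions: $(E_1,{\bf p})$ is not $\mu$-semistable $\iff$ some $M$ has ${\rm Stab}_\mu(M)<0$ $\iff$ some $M$ has $|B|\leq d+k$ $\iff$ a morphism $L\rightarrow E_1$ of degree $-k$ through all $p_j$, $j\in I$, exists.
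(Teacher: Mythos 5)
Your proof is correct and follows essentially the same route as the paper's: both reduce instability over $\mathfrak{C}_I$ to the integer inequality $2|B|-2d-2k-1<0$ (the paper extracts this by writing $\mu=\varphi_I(\mu')$ and bounding $\left|\sum\pm\mu'_j\right|<1$, you by evaluating the sign at a corner of the chamber --- the same odd-integer computation), and both pass between destabilizing subbundles and degree $-k$ morphisms via saturation in one direction and twisting down by an auxiliary effective divisor in the other. The only cosmetic difference is your choice of twisting divisor ($\sum_{j\in B}t_j$ plus points off $D$, versus the paper's divisor supported on parabolic points indexed by $I$), which is equally valid.
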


\begin{proof}
Fix $\mu = (\mu_1, \cdots, \mu_n)\in \mathfrak{C}_I$ and recall that $\mu = \varphi_I(\mu')$ for some $\mu' \in \mathfrak{C}$. First assume that $(E_1,{\bf p})$ is not $\mu$-semistable and let $M\subset E_1$ be a subbundle such that $Stab_{\mu}(M)<0$. Denote  $A = \{j \in  \{1, \cdots, n\} \mid p_{j}= M_{t_{j}} \}$ so that
\begin{align*}
   0 > {\rm Stab}_{\mu}(M) & = 1 - 2 \deg M + \sum_{\substack{j \notin A \\ j\in I }} \mu_{j} + \sum_{\substack{j \notin A \\ j\notin I }} \mu_{j} - \sum_{\substack{j \in A \\ j \in I}} \mu_{j}  - \sum_{\substack{j \in A \\ j \notin I}} \mu_{j} \\
    & = 1 - 2 \deg M + \sum_{\substack{j \notin A \\ j\in I }} (1-\mu_{j}') + \sum_{\substack{j \notin A \\ j\notin I }} \mu_{j}' - \sum_{\substack{j \in A \\ j \in I}} (1-\mu_{j}')  - \sum_{\substack{j \in A \\ j \notin I}} \mu_{j}' \\
    & = 1 - 2 \deg M + |I|-2|A\cap I| - \sum_{\substack{j \notin A \\ j\in I }} \mu_{j}' + \sum_{\substack{j \notin A \\ j\notin I }} \mu_{j}' + \sum_{\substack{j \in A \\ j \in I}} \mu_{j}' - \sum_{\substack{j \in A \\ j \notin I}} \mu_{j}'.
\end{align*}
From $\sum_j \mu_j' <1$ and $|I| = 2(k+1)$ we get that 
\[
- \deg M + k+1 - |A\cap I| \leq -1.
\]
Now let $u := \deg M + k$. The inequality above plus $\deg M \leq 0$ implies that 
\[
| I \setminus (I\cap A)| = 2k+ 2 - |A\cap I| \leq u \leq k \leq |I|,
\]
hence there exists $J \subset \{1, \cdots, n\}$ such that $|J| = u$ and $I \setminus (I\cap A) \subset J \subset I$. Define $Z = \sum_{j \in  J} t_j$. Considering the inclusions $M \hookrightarrow E_1$ and $\mathcal{O}_C(-Z)\hookrightarrow \mathcal{O}_C$ we define a map by the composition
\[
\phi\colon M \otimes \mathcal{O}_C(-Z) \longrightarrow E_1(-Z) \longrightarrow E_1,
\]
with the property that it gives the same directions over $D-Z$ and vanishes over $Z$. Hence the image of $\phi$ passes through every direction from $I$ and $L := M(-Z)$ is our desired line bundle.
 
Conversely, suppose that there exists a degree $-k$ line bundle $L$ and a nontrivial morphism $\phi \colon L \rightarrow E_1$ passing through every $p_j$, $j\in I$. Let $Z$ be the zero divisor of $\phi$ and consider the reduction $\phi' \colon L(Z) \rightarrow E_1$. Then $\phi'$ realizes $L(Z)$ as a subbundle of $E_1$ and, in particular, $\deg Z \leq k$. 

On the other hand, we have that $p_j$ lie on $L(Z)$ for every $j$ such that $t_j \not\in {\rm Supp } Z$. If $A$ is given as above, we have $|A\cap I| \geq 2k+2 - \deg Z_{red}$, hence
\begin{align*}
    1 - 2\deg L(Z) + |I| -2|A\cap I| & = 4k+ 3 -2|A\cap I| - 2\deg Z \\
    & \leq 2(\deg Z_{red}-\deg Z) - 1 \leq -1,
\end{align*}
which implies ${\rm Stab}_{\mu}(L(Z)) <0$ for $\mu\in \mathfrak{C}_I$.
\end{proof}

We now see the real advantage of switching to this slightly more general setting. The previous lemma describes a wall-crossing phenomenon. And the next lemma can be used to describe geometrically the space of quasi-parabolic bundles that become unstable when we cross a wall. 

\begin{lemma}\label{lem:boundary}
Let $n = 2k+2$ for some $k\geq 0$. Let $V\subset (\mathbb{P}^1)^n$ be the locus of points that correspond to quasi-parabolic bundles $(E_1, {\bf p})$ satisfying the following property: there exist a line bundle $L$ of degree $-k$ and a morphism $\phi \colon L \longrightarrow E_1$ whose image passes through ${\bf p}$. Then $V$ is a hypersurface of degree $(2, \dots, 2)$.
\end{lemma}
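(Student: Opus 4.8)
The plan is to realise $V$ as the image of an evaluation map from a projective bundle over $\operatorname{Pic}^{-k}(C)$, to deduce $\dim V\le n-1$ by a dimension count, and then to compute the multidegree $(d_1,\dots,d_n)$ of $V$ by intersecting it with the $n$ families of coordinate lines of $(\mathbb P^1)^n$. \emph{First}, fix $L$ with $\deg L=-k$. Then $\operatorname{Hom}(L,E_1)=H^0(C,E_1\otimes L^{-1})$, and since $E_1^{\vee}\cong E_1\otimes\mathcal O_C(-w_\infty)$ the bundle $E_1^{\vee}\otimes L\cong E_1\otimes L(-w_\infty)$ is indecomposable of negative degree, hence has no sections by Atiyah's classification (as already used in Lemma \ref{lem:indec}); thus $h^1(E_1\otimes L^{-1})=0$ and Riemann–Roch gives $\dim\operatorname{Hom}(L,E_1)=\deg(E_1\otimes L^{-1})=1+2k=n-1$. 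Writing $P=\operatorname{Pic}^{-k}(C)\cong C$ with projections $\pi_C,\pi_P$ from $C\times P$ and a Poincaré bundle $\mathcal L$, cohomology and base change show that $\mathcal H:=\pi_{P*}(\pi_C^{*}E_1\otimes\mathcal L^{-1})$ is locally free of rank $n-1$ with fibre $\operatorname{Hom}(L,E_1)$ over $L$. The evaluation $(L,[\phi])\mapsto(\phi_{t_1}(L_{t_1}),\dots,\phi_{t_n}(L_{t_n}))$ is then a rational map $\operatorname{ev}\colon\mathbb P(\mathcal H)\dashrightarrow(\mathbb P^1)^n$ with image $V$, and since $\dim\mathbb P(\mathcal H)=n-1$ we already get $\dim V\le n-1$.

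\emph{Next}, I compute $d_i=[V]\cdot[\ell_i]$, where $\ell_i$ is a generic coordinate line $\{q_1\}\times\cdots\times\mathbb P^1\times\cdots\times\{q_n\}$ in the $i$-th factor. A point of $V\cap\ell_i$ is the image of a pair $(L,\phi)$ whose image passes through the fixed directions $q_j$, $j\ne i$. I encode ``image through $q_j$'' as the vanishing of the composite
$s_j\colon \mathcal H\xrightarrow{\ \operatorname{ev}_{t_j}\ }(E_1|_{t_j})\otimes(\mathcal L^{-1}|_{\{t_j\}\times P})\twoheadrightarrow N_j$,
where $N_j:=(E_1|_{t_j}/q_j)\otimes(\mathcal L^{-1}|_{\{t_j\}\times P})$ is a line bundle on $P$. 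The relevant pairs are the zeros of $\det\big((s_j)_{j\ne i}\big)\in H^0\big(P,(\det\mathcal H)^{-1}\otimes\bigotimes_{j\ne i}N_j\big)$, a section that is nonzero precisely because $\dim V<n$. Hence, for generic $q_\bullet$,
\[
d_i=\sum_{j\ne i}\deg N_j-\deg\det\mathcal H .
\]

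\emph{Finally}, I evaluate these two terms. Each $\deg N_j=\deg(\mathcal L^{-1}|_{\{t_j\}\times P})=0$, since the restriction of the normalised Poincaré class to $\{t_j\}\times P$ has degree $0$. For $\deg\det\mathcal H$, Grothendieck–Riemann–Roch along $\pi_P$ — whose relative dualizing sheaf is trivial because $C$ is elliptic — gives $c_1(\det\mathcal H)=\pi_{P*}\operatorname{ch}_2(\pi_C^{*}E_1\otimes\mathcal L^{-1})$; the term involving $c_1(E_1)$ pushes forward to $0$ by Künneth, leaving $\deg\det\mathcal H=\operatorname{rk}(E_1)\cdot\tfrac12\!\int_{C\times P}\!c_1(\mathcal L)^2=-2$, where $\int c_1(\mathcal L)^2=-2$ records the principal polarization of $C$. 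Thus $d_i=0-(-2)=2$ for every $i$. Since each $d_i$ is finite and positive for generic $\ell_i$, the variety $V$ meets a generic codimension-$(n-1)$ coordinate line, so $\dim V=n-1$; therefore $V$ is a hypersurface whose class is $(2,\dots,2)$. (As a check, when $n=2$ one has $\mathbb P(\mathcal H)\cong C$ mapping onto a bidegree-$(2,2)$ curve, whose arithmetic genus $1$ matches that of $C$.)

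\emph{The main obstacle} is the transversality bookkeeping behind the displayed formula for $d_i$: one must verify that for generic $q_\bullet$ the section $\det\big((s_j)_{j\ne i}\big)$ has two simple zeros at distinct line bundles $L$, producing two distinct reduced points of $V\cap\ell_i$, so that the intersection number equals the naive count $2$; and that the pairs $(L,\phi)$ for which $\phi$ vanishes at some $t_j$ — where $\operatorname{ev}$ is undefined and the constraint at $t_j$ becomes vacuous — fill out only a lower-dimensional locus and so cannot alter the class of $V$. A secondary, purely computational point is confirming $\int_{C\times P}c_1(\mathcal L)^2=-2$ together with the vanishing of the mixed Grothendieck–Riemann–Roch term.
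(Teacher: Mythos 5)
Your route is genuinely different from the paper's. The paper proves $V\cdot h_j=2$ by reducing to the case $k=0$ --- an elementary transformation centred at $2k$ generic directions turns a degree-$(-k)$ morphism $L\to E_1$ into a degree-$0$ morphism $L\otimes M\to E'\otimes M\simeq E_1$ --- and then quoting \cite[Proposition 3.3]{Nestor}, which says that $L\mapsto\phi_{t}(L)\in\mathbb P^1$ is generically $2:1$ on $\operatorname{Pic}^0(C)$. You instead assemble all pairs $(L,\phi)$ into the $\mathbb P^{n-2}$-bundle $\mathbb P(\mathcal H)$ over $\operatorname{Pic}^{-k}(C)$ and extract the number $2$ from Grothendieck--Riemann--Roch. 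Your numerics check out: $\operatorname{rk}\mathcal H=n-1$ (Atiyah's classification gives $h^1=0$), $\deg N_j=0$ and $\deg\det\mathcal H=-2$ for a normalised Poincar\'e bundle (the latter is Mumford's $\chi(\mathcal P)=(-1)^g$), and the answer $\sum_{j\neq i}\deg N_j-\deg\det\mathcal H=2$ is independent of the normalisation, as it must be. This is a more systematic way to produce the expected count and it works uniformly in $k$, with no reduction step.

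The obstacle you flag at the end is, however, a genuine gap rather than bookkeeping: the degree of $(\det\mathcal H)^{-1}\otimes\bigotimes_{j\ne i}N_j$ only shows that the fibre of $\Psi:=p_{\hat\imath}\circ\operatorname{ev}\colon\mathbb P(\mathcal H)\dashrightarrow(\mathbb P^1)^{n-1}$ over a generic $q_\bullet$ has at most $2$ points, and since $\deg\Psi=e\cdot d_i$ with $e=\deg(\operatorname{ev}\colon\mathbb P(\mathcal H)\dashrightarrow V)$, you only obtain $1\le e\cdot d_i\le 2$; a generic double zero of the determinant section, or the coincidence $\operatorname{ev}(L_1,\phi_1)=\operatorname{ev}(L_2,\phi_2)$, would give $d_i=1$. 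Both loose ends can be tied within your framework. First, $e=1$: for a generic $(L,[\phi])$ the map $\phi$ is a saturated embedding, and two distinct degree-$(-k)$ line subbundles $L,L'\subset E_1$ agree fibrewise exactly along the zero divisor of the induced section of $(E_1/L)\otimes L'^{-1}$, which has degree $2k+1<n$; hence they cannot share all $n$ directions, so a generic point of $V$ determines $(L,[\phi])$ and $\operatorname{ev}$ is generically injective. Second, $\deg\Psi\ge 2$: if $\Psi$ had degree $1$ it would be birational, but $\mathbb P(\mathcal H)$ surjects onto the elliptic curve $\operatorname{Pic}^{-k}(C)$ and so carries a nonzero holomorphic $1$-form, whereas $(\mathbb P^1)^{n-1}$ does not; since $h^{1,0}$ is a birational invariant, $\deg\Psi\ge 2$, hence $\deg\Psi=2$ and $d_i=2$. (Alternatively, this last point is exactly where the paper invokes \cite[Proposition 3.3]{Nestor} after reducing to $k=0$.) With these two additions your argument is complete and independent of the paper's.
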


\begin{proof}
Let $\pi_j \colon (\mathbb{P}^1)^n \longrightarrow (\mathbb{P}^1)^{n-1}$ be the projection given by forgetting the $j$th component and let $h_j$ be the class of a fiber of $\pi_j$. Then we only need to show that $V \cap h_{j} = 2$ for every $j$. Up to permuting indices we only need to consider $j= 2k+2$.

If $k=0$ the result follows from \cite[Proposition 3.3]{Nestor}. Indeed, for each degree $0$ line bundle $L\in {\rm Pic}^{0}(C)$ there exists a unique map $\phi\colon L \rightarrow E_1$ and the map $L \mapsto \phi_{t_1}(L) \in \mathbb{P}^1$ is generically $2:1$. Then, for a generic direction $p_1$, there exist two choices for $L\in {\rm Pic}^{0}(C)$ such that $\phi_{t_1}(L)\subset p_1$. Therefore, $(p_1,p_2)\in V$ if and only if $p_2$ is one of the directions defined by these line bundles, i.e. $V\cap h_2 = 2$.

Now we consider $k\geq 1$. We will show that we can reduce to the previous case. Fix $p_1, \dots, p_{2k}$ generic directions. By generic we mean that there exists no subbundle of degree at least $1-k$ passing through these directions. Let $L\in {\rm Pic}^{-k}(C)$ be any line bundle. To give a map $\phi \colon L\longrightarrow E_1$ passing through $p_1, \dots, p_{2k}$ is equivalent to giving a map $L\longrightarrow E'$, where $E'$ is obtained by elementary transformation with respect to $p_1, \dots, p_{2k}$. Indeed, we have
\[
0 \longrightarrow E' \stackrel{\alpha}{\longrightarrow} E_1 \stackrel{\beta}{\longrightarrow} \bigoplus_{j=1}^{2k} (E_1)_{t_j}/p_j \longrightarrow 0
\]
and $\beta \circ \phi =0$ if and only if there exists $\phi' \colon L \longrightarrow E'$ such that $\phi = \alpha \circ \phi'$. Nonetheless, this is equivalent to giving a map 
\[
\phi' \otimes 1 \colon L \otimes M \longrightarrow E'\otimes M
\]
where $M$ is a line bundle such that $M^2 = \mathcal{O}_C(t_1 + \dots + t_{2k})$. 

Since $p_1, \dots, p_{2k}$ are generic, $E'$ is indecomposable. In particular, $E'\otimes M= E_1$. We then apply the same argument of the case $k=0$ to the directions $p_{2k+1}$ and $p_{2k}$ to show that $V\cap h_{2k+2} = 2$.
\end{proof}

\begin{definition}\label{def:gammaI}
Let $n\geq 2$ be an integer and let $I \subset \{1, \dots, n \}$ be a subset of even cardinality.
We will denote by $\Gamma_I\subset (\mathbb{P}^1)^n$ the subvariety that parameterizes quasi-parabolic bundles $(E_1, {\bf p})$ that are not $\mu$-semistable for $\mu \in \mathfrak{C}_I$.
\end{definition}

\begin{cor}\label{cor:dimgammaI}
The subvariety $\Gamma_I\subset (\mathbb{P}^1)^n$ is a hypersurface of degree $(d_1, \dots, d_n)$, where $d_i = 2$ if $i\in I$ and $d_i =0$ otherwise.
\end{cor}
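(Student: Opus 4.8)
The plan is to reduce the assertion to the special case $I=\{1,\dots,n\}$, which is exactly Lemma~\ref{lem:boundary}. The key observation is that the defining condition for $\Gamma_I$ is insensitive to the parabolic directions outside $I$. Indeed, by Lemma~\ref{lem:morph}, writing $|I|=2k+2$, a bundle $(E_1,{\bf p})$ fails to be $\mu$-semistable for $\mu\in\mathfrak{C}_I$ precisely when there exist a line bundle $L$ of degree $-k$ and a morphism $L\to E_1$ whose image passes through $p_j$ for every $j\in I$; this requirement involves only the directions $p_j$ with $j\in I$.

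I would therefore factor $\Gamma_I$ through a projection. Let $\pi_I\colon(\mathbb{P}^1)^n\to\prod_{i\in I}\mathbb{P}^1$ be the map forgetting the coordinates outside $I$, and let $V_I\subset\prod_{i\in I}\mathbb{P}^1$ be the locus of tuples $(p_i)_{i\in I}$ admitting such a pair $(L,\phi)$. Combining Lemma~\ref{lem:morph} with Definition~\ref{def:gammaI} yields the identification $\Gamma_I=\pi_I^{-1}(V_I)$.

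The next step is to recognize $V_I$ as the hypersurface produced by Lemma~\ref{lem:boundary}, applied to the punctured curve $(C,\sum_{i\in I}t_i)$ in place of $(C,D)$. This is legitimate because $E_1$ depends only on the determinant $\mathcal{O}_C(w_\infty)$ and not on the parabolic divisor, so the lemma applies verbatim with $n$ replaced by $|I|=2k+2$ and $L$ of degree $-k$. Hence $V_I$ is a hypersurface of multidegree $(2,\dots,2)$, cut out by a polynomial that is multihomogeneous of degree $2$ in each of the $|I|$ factors. Pulling this polynomial back along $\pi_I$ gives a multihomogeneous equation on $(\mathbb{P}^1)^n$ of degree $2$ in the factors indexed by $I$ and degree $0$ in the remaining factors; it cuts out $\Gamma_I$ and exhibits the asserted multidegree $(d_1,\dots,d_n)$.

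The only delicate point, and the one I would verify carefully, is the compatibility between the two lemmas: that the locus $V$ of Lemma~\ref{lem:boundary} for the sub-divisor $\sum_{i\in I}t_i$ really coincides with the condition extracted from Lemma~\ref{lem:morph}. Once one checks that the numerics match (so that $|I|=2k+2$ forces $\deg L=-k$) and that the underlying bundle $E_1$ is unchanged, the argument is pure bookkeeping and no substantial obstacle remains.
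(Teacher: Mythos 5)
Your proposal is correct and follows essentially the same route as the paper: the paper's proof likewise observes via Lemma~\ref{lem:morph} that $\Gamma_I$ depends only on the directions indexed by $I$, writes $\Gamma_I\simeq V\times(\mathbb{P}^1)^{n-|I|}$, and reduces to Lemma~\ref{lem:boundary} with $n$ replaced by $|I|$. The compatibility check you flag (that $|I|=2k+2$ forces $\deg L=-k$ and that $E_1$ is unchanged) is exactly the implicit content of the paper's reduction, so there is nothing further to add.
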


\begin{proof}
Note that the formation of $\Gamma_I$ depends only on the directions indexed by $I$. Then it will be a product $\Gamma_I \simeq V \times(\mathbb{P}^1)^{n-|I|}$. Therefore, we may reduce to the case $\Gamma_I = V$, i.e. $|I| =n$ and the conclusion follows from Lemma \ref{lem:morph} and Lemma \ref{lem:boundary}.
\end{proof}

\begin{remark}\label{rem:elmGammaI}
A quasi-parabolic bundle $(E_1,{\bf p})$ is not $\mu$-semistable for $\mu\in \mathfrak{C}_I$ if and only if $\elm_I(E_1,{\bf p}) = (E, {\bf p'})$ is not $\mu'$-semistable for $\mu' =\varphi_I(\mu) \in \mathfrak{C}$. The later occurs if and only if $E$ splits. Therefore $\Gamma_I$ corresponds, via $\elm_I$, to the locus in $\operatorname{Bun}^I$ of those quasi-parabolic bundles whose underlying vector bundles split.
\end{remark}

\section{Logarithmic connections}\label{sect:logcon}
A logarithmic connection on a rank two vector bundle $E$ over $C$ with polar divisor $D=t_1+\cdots +t_n$ is a $\mathbb C$-linear map
\[
\nabla\colon E \longrightarrow E\otimes \Omega_C^1(D)
\]
satisfying the Leibniz rule
\[
\nabla(fs)= s \otimes df + f \nabla (s)
\]
for (local) sections $s$ of $E$ and $f$ of $\mathcal{O}_C$. 
If $t\in C$ is a pole for $\nabla$ and $U\subset C$ is a small trivializing neighborhood of $t$, we write $\nabla|_U = d + A$ where $d\colon \mathcal O_C \longrightarrow \Omega_C^1$ is the exterior derivative and $A$ is a $2\times 2$ matrix whose coefficients are $1$-forms with at most simple poles on $t$. 
Note that $A$ depends on the trivialization, but its similarity class does not. Then the residue endomorphism 
\[
{\rm Res}_{t}(\nabla):={\rm Res}_{t}(A) \in {\rm End}(E_{t})
\] 
is well defined. Let $\nu_k^+$ and $\nu_k^-$ be the eigenvalues of ${\rm Res}_{t_k}(\nabla)$. The data 
\[
\nu=(\nu_1^{+},\nu_{1}^-,...,\nu_n^{+},\nu_n^-) \in \mathbb C^{2n}
\]
are called the {\it eigenvalues} of $\nabla$. The induced trace connection 
\[
{\rm tr}(\nabla)\colon \det(E) \rightarrow\det(E)\otimes\Omega^1_C(D)
\] 
satisfies ${\rm Res}_{t_k}({\rm tr}(\nabla))=\nu_k^+ + \nu_k^-$ and Residue Theorem yields the Fuchs relation:
\[
\deg E + \sum_{k=1}^n (\nu_k^+ + \nu_k^-)=0.
\]

\begin{remark}\label{rem:condition}
Hereafter we will fix the following data:
\begin{enumerate}
\item A $2n$-tuple of complex numbers $\nu=(\nu_1^{+},\nu_{1}^-,...,\nu_n^{+},\nu_n^-)$ satisfying the Fuchs relation
\[
1 + \sum_{k=1}^n (\nu_k^+ + \nu_k^-)=0
\]
and the generic condition: $\nu_1^{\epsilon_1}+\cdots+\nu_n^{\epsilon_n} \notin \mathbb{Z}$ for any $\epsilon_k \in \{+,- \}$, to avoid reducible connections;  and $\nu_k^+\neq\nu_k^-$ for all $k\in\{1,\dots, n\}$, so that the residues have distinguished eigenspaces;
\item A fixed trace connection $\zeta\colon \mathcal O_C(w_{\infty}) \rightarrow \mathcal O_C(w_{\infty})\otimes \Omega^1_C(D)$ satisfying 
\[
{\rm Res}_{t_k}(\zeta)=\nu_k^+ + \nu_k^-
\]
for all $k=1,...,n$.
\end{enumerate}
\end{remark}
Then we define the moduli space:
\[
\contotal = \left\{ (E,\nabla) \mathrel{\Big|} \begin{matrix} \nabla \text{ has eigenvalue } \nu,\\
\det E = \mathcal{O}_C(w_{\infty}), \, {\rm tr}\nabla = \zeta 
\end{matrix} \right\}{/\sim}
\]
where $\sim$ stands for $S$-equivalence. In fact, the condition (1) on $\nu$ implies that $\sim$ may be thought as equivalence up to isomorphism.

Algebraic constructions of moduli spaces of connections goes back to the works of Simpson and, 
in the logarithmic case, Nitsure in \cite{Nitsure}. In our setting, it is more convenient to refer to
the works of Inaba, Iwasaki and Saito \cite{IIS}, and more precisely Inaba \cite{Inaba}.
Indeed, under our generic assumption on $\nu$, 
each connection $\nabla$ on $E$ defines a unique parabolic structure, 
by selecting the eigenspace $p_k\subset E\vert_{t_k}$ associated to $\nu_k^+$ at each pole $t_k$; 
therefore, $\contotal$ can equivalently be viewed as the moduli space of parabolic connections 
as considered in the work \cite{Inaba} of Inaba.
Then it follows from \cite[Theorem 2.1, Proposition 5.2]{Inaba} that it is quasi-projective and irreducible
of dimension $2n$. Moreover, \cite[Theorem 2.2]{Inaba} shows that it is moreover smooth.
In fact, in order to fit with the stability condition \cite[Definition 2.2]{Inaba}, we set 
$\alpha_1^{(k)}=\frac{1-\mu_k}{2}$ and $\alpha_2^{(k)}=\frac{1+\mu_k}{2}$; our moduli space
therefore corresponds to the fiber $\det^{-1}(L,\zeta)$ of the determinant map considered at the beginning
of \cite[Section 5]{Inaba}. When $\nu_k^+=\nu_k^-$ for some $k$, there exist connections with scalar 
residue (apparent singular point) which give rise to a singular locus in the moduli space; 
the role of the parabolic structure in \cite{Inaba} is to get a smooth moduli space even in that case.

Then the moduli space $\contotal$ is a smooth irreducible quasi-projective variety of dimension $2n$, provided that the condition (1) on $\nu$ is satisfied. The case $n=1$ is not covered by \cite{Inaba}, but it follows from \cite{Lame}; we will discuss this case at the end of this section.

 There exists also an analytical construction for $\contotal$ following Nakajima \cite{NaHK}. There, moduli spaces of rank two parabolic connections were constructed via hyper-kähler quotients. Thus $\contotal$ has a holomorphic symplectic structure $\omega$.

In the study of $\contotal$, it is useful to consider the quasi-parabolic bundles underlying a connection. 
Let us assume $\nu_k^+\neq\nu_k^-$ for all $k\in\{1,\dots, n\}$. Given a connection $(E,\nabla)$, we associate, for each $k = 1, \dots, n$, a pair of ``positive'' and ``negative'' eigenspaces of ${\rm Res}_{t_k}(\nabla)$
\[
p_{k}^{+}(\nabla), p_{k}^{-}(\nabla)\in \mathbb P (E_{t_k})
\]
corresponding to the eigenvalues $\nu_{k}^{+}$ and $\nu_{k}^{-}$ respectively. 

Given an $n$-tuple $\epsilon = (\epsilon_1,\cdots, \epsilon_n)$, where each $\epsilon_i\in\{+,-\}$, we denote
\[
{\bf p}^{\epsilon}(\nabla) = \{p_1^{\epsilon_1}(\nabla),\cdots, p_n^{\epsilon_n}(\nabla)\}
\]
and consider $(E,{\bf p}^{\epsilon}(\nabla))$ the quasi-parabolic vector bundle defined by these directions. 

\begin{remark}\label{rem:genindec}
The hypothesis that $\nu_1^{a_1}+\cdots+\nu_n^{a_n} \notin \mathbb Z$, for every $a\in \{+.-\}^n$, ensures that $(E,{\bf p}^{\epsilon}(\nabla))$ is an indecomposable quasi-parabolic bundle. Indeed, if $(L, {\bf p})$ is rank one direct summand of $(E,{\bf p}^{\epsilon}(\nabla))$ then the residues of the induced connection on $L$ are either $\nu_j^+$ or $\nu_j^-$; their sum is $-\deg L$, see \cite[Corollary 2.3]{FL}.
\end{remark}

One can then ask for the stability of these quasi-parabolic bundles with respect to some weight. We define
\[
\contotal_{st} = \left\{ (E,\nabla) \in \contotal \mathrel{\Big|} \begin{matrix}
\exists\, \epsilon\in\{+,-\}^n \text{ and } \exists\, I \subset \{1, \dots, n \} \\ \text{such that } (E,{\bf p}^{\epsilon}(\nabla)) \in {\rm Bun}^I
\end{matrix} \right\}.
\]
Recall that $|I|$ is always assumed to be even. 
It follows that 
\[
\contotal = \contotal_{st} \sqcup \ZZ_n
\]
where $\ZZ_n$ denotes the complement of $\contotal_{st}$. Our aim in the next subsections is to describe these varieties. We will show that $\contotal_{st}$ can be covered by simple open subsets and that $\ZZ_n$ falls in two cases: either $n$ is even and $\ZZ_n= \emptyset$ or $n$ is odd and $\ZZ_n$ has four connected components, each one is a quotient of $\mathbb{C}^{n+1}$ by a free affine action of the additive group $(\mathbb{C},+)$, hence isomorphic to $\mathbb{C}^n$.

\subsection{Connections on \texorpdfstring{$E_1$}{E1}} Our main building block in the description of $\contotal_{st}$ is the space defined by 
\[
{\rm Con}^\nu = \left\{(E,\nabla) \in \contotal \mid E = E_1 \right\}.
\]
Note that every underlying quasi-parabolic bundle lies in ${\rm Bun}={\rm Bun}^{\emptyset}$, see Proposition \ref{prop:1stchamb} and Definition \ref{def:bunI}. The same proposition shows that ${\rm Bun} \simeq (\mathbb{P}^1)^n$. We will see that ${\rm Con}^\nu$ has a similar description. 

Let $\Delta\subset \mathbb P^1\times \mathbb P^1$ be the diagonal and let $S:=(\mathbb P^1\times \mathbb P^1)\backslash\Delta$ be its complement. Then we define a map 
\[
\begin{tikzcd}[row sep=1.pt, ampersand replacement=\&]
{\rm Par}\colon {\rm Con}^\nu \arrow[r] \& S^n\\
	\quad\quad (E_1, \nabla) \arrow[r, maps to] \& (p_1^+(\nabla),p_1^-(\nabla) ; \cdots ; p_n^+(\nabla),p_n^-(\nabla) )
\end{tikzcd}.
\]
This map is in fact an isomorphism.

\begin{thm}\label{thm:parconSn}
	The map	$\Par \colon {\rm Con}^{\nu}\rightarrow S^n$ is an isomorphism. 
\end{thm}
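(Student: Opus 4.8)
The plan is to exhibit an explicit inverse to $\Par$, or at least to prove the map is bijective with everywhere-injective differential, and then invoke smoothness of both source and target to conclude it is an isomorphism. Since $\contotal$ is smooth (by \cite{Inaba}), ${\rm Con}^\nu$ is a smooth open subvariety, and $S^n$ is smooth of the same dimension $2n$, it suffices to show $\Par$ is an injective morphism whose image is all of $S^n$ and whose differential is injective (equivalently, to produce a regular inverse). I expect the cleanest route is direct construction of the inverse: given a point $(p_1^+,p_1^-,\dots,p_n^+,p_n^-)\in S^n$, I must build a connection $\nabla$ on $E_1$ with trace $\zeta$, eigenvalues $\nu$, and prescribed eigendirections, and show it is unique.

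\emph{First}, I would verify that $\Par$ is well-defined and lands in $S^n$: for each $k$, since $\nu_k^+\neq\nu_k^-$ the two eigenspaces $p_k^+(\nabla),p_k^-(\nabla)$ are distinct points of $\mathbb{P}(E_1|_{t_k})\simeq\mathbb{P}^1$, so the pair avoids the diagonal. \emph{Second}, and this is the crux, I would analyze the fiber of the underlying-bundle map: the set of connections $\nabla$ on the \emph{fixed} bundle $E_1$ with trace $\zeta$ and the prescribed residual eigenvalues forms a torsor under $H^0(C,\mathrm{End}^0(E_1)\otimes\Omega^1_C(D))$ (the space of $\mathfrak{sl}_2$-valued Higgs fields), once a base connection exists. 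Prescribing the eigendirections ${\bf p}^+,{\bf p}^-$ imposes linear conditions that I expect to cut this affine space down to a single point. Concretely, I would count: the space of trace-fixed connections with fixed eigenvalues on $E_1$ should have dimension equal to $\dim S^n=2n$, and specifying $2n$ eigendirections (two per pole) should be exactly the right number of independent conditions. So the bijectivity reduces to a dimension count plus a transversality/nondegeneracy argument showing the $2n$ conditions are independent.

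\emph{The main obstacle} is establishing \emph{existence and uniqueness} of the connection realizing an arbitrary prescribed tuple in $S^n$ — i.e.\ surjectivity together with injectivity of $\Par$. For uniqueness, suppose $\nabla_1,\nabla_2$ induce the same eigendirections; their difference is a Higgs field $\Theta\in H^0(C,\mathrm{End}^0(E_1)\otimes\Omega^1_C(D))$ whose residue at each $t_k$ must preserve both $p_k^+$ and $p_k^-$, hence (since these are distinct) be diagonal in that eigenbasis; combined with the trace-free and fixed-eigenvalue constraints this forces $\mathrm{Res}_{t_k}(\Theta)=0$, so $\Theta$ is a \emph{regular} trace-free Higgs field, i.e. $\Theta\in H^0(C,\mathrm{End}^0(E_1)\otimes\Omega^1_C)$. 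The key computation is that this space vanishes: because $E_1$ is the indecomposable degree-one bundle, $\mathrm{End}^0(E_1)$ has no global sections twisted by $\Omega^1_C\simeq\mathcal{O}_C$ compatible with the stability of $E_1$, forcing $\Theta=0$ and hence $\nabla_1=\nabla_2$. For surjectivity, I would run the same cohomological analysis in reverse, showing the residue-fixing conditions are always solvable, which should follow from a Riemann–Roch / $H^1$-vanishing computation on $E_1$ using the genericity hypothesis $\nu_1^{\epsilon_1}+\cdots+\nu_n^{\epsilon_n}\notin\mathbb{Z}$.

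\emph{Finally}, having a bijective morphism between smooth varieties of equal dimension, I would promote it to an isomorphism by checking the inverse is a morphism — either via Zariski's main theorem (the map is birational and $S^n$ is normal, so a bijective birational morphism to a normal variety is an isomorphism), or by verifying directly that $d\Par$ is everywhere an isomorphism of tangent spaces, which follows from the same vanishing $H^0(C,\mathrm{End}^0(E_1)\otimes\Omega^1_C)=0$ that gave uniqueness. I expect the tangent-space argument to be the most economical closing step, since it reuses the central cohomological vanishing rather than invoking separate birationality machinery.
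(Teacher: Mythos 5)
Your proposal is essentially correct, but it takes a genuinely different route from the paper. The paper does not argue directly on $E_1$: it postpones the proof to Section \ref{sect:fuchssyst}, where it first converts $(E_1,\nabla)$ into a Fuchsian system on $\mathcal O_C\oplus\mathcal O_C$ by elementary transformation over the three $2$-torsion points $w_0,w_1,w_\lambda$ (Proposition \ref{prop:consys}), and then shows by explicit matrix computation that such a system is completely determined by, and freely reconstructible from, its residues at the $t_j$ --- which in turn are determined by the two eigenspaces and the eigenvalues $\pm\nu_j/2$ (Proposition \ref{prop:parfuchsSn}). That detour buys the explicit trivializations and transition matrices that the paper reuses for the compactification $\mathbb{P}(\mathcal E)$ and the Darboux form of the symplectic structure; your intrinsic argument is more economical if one only wants the theorem itself.

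Two remarks on your argument. Your uniqueness step is sound: if $\nabla_1,\nabla_2$ induce the same full residual data, their difference is a traceless Higgs field whose residues vanish (being simultaneously diagonal in the eigenbasis, traceless, and with fixed eigenvalues), hence lies in $H^0(C,\mathrm{End}^0(E_1)\otimes\Omega^1_C)\simeq H^0(C,\mathrm{End}^0(E_1))=0$ because $E_1$ is simple. The surjectivity step, however, is the crux and is the one place you leave things at the level of ``should follow from a Riemann--Roch computation.'' The clean way to close it is not a dimension count but the observation that a point of $S^n$ together with the eigenvalues \emph{completely determines} the candidate residue endomorphisms $A_k\in\mathrm{End}(E_1|_{t_k})$, so surjectivity is exactly the existence of a logarithmic connection on $E_1$ with prescribed residues; by the criterion of \cite{BISWAS} (which the paper itself invokes for the analysis of $\ZZ_n$) the only obstruction is $\phi^A_{E_1}(T)=0$ for $T\in\mathrm{End}(E_1)=\mathbb{C}\cdot\mathrm{id}$, i.e.\ the Fuchs relation, which holds by hypothesis. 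Note that the genericity condition $\nu_1^{\epsilon_1}+\cdots+\nu_n^{\epsilon_n}\notin\mathbb{Z}$ is not what makes the conditions solvable here --- simplicity of $E_1$ is; that condition only guarantees irreducibility and the good properties of the ambient moduli space from \cite{Inaba}. With existence and uniqueness in hand, your closing step (bijective morphism between smooth varieties, inverse regular by the same vanishing or by Zariski's main theorem) is fine.
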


\begin{proof}
We may factor the map $\Par$ into two parts:
\[
{\rm Con}^{\nu} \xrightarrow{{\rm Res}_D}  \bigoplus_{j=1}^n \left\{ A \in {\rm End}(E_1)_{t_j} \mid A\textrm{ has eigenvalues } \nu_j^+, \nu_j^- \right\} \stackrel{\pi}{\longrightarrow}  S^n 
\]
where ${\rm Res}_D$ is the residue map and $\pi$ sends an endomorphism to the ordered pair of eigenspaces, $+$ then $-$. Both maps are indeed morphisms and we will see that they are both isomorphisms.

First we deal with $\pi$. Given an endomorphism $A$ of $(E_1)_{t_j}$ with eigenvalues $\nu_j^+ \neq \nu_j^-$ we can associate the respective eigenspaces $(p^+, p^-)$. Conversely, fix a local frame for $E_1$ around $t_j$ giving coordinates $(E_1)_{t_j} \simeq \mathbb{C}^2$. Then to $((z:w), (u:v)) \in S$ we can associate the matrix
\[
A = \frac{1}{zv-uw}\begin{pmatrix}
	z & u \\ w & v
\end{pmatrix} \begin{pmatrix}
	\nu_j^+ & 0 \\ 0 & \nu_j^-
\end{pmatrix}\begin{pmatrix}
	v & -u \\ -w & z
\end{pmatrix}
\]
defining an element of ${\rm End}(E_1)_{t_j}$. Thus $\pi$ is bijective. 

Next we deal with ${\rm Res}_D$. If $\nabla_1, \nabla_2 \in {\rm Con}^{\nu}$ have the same residues at $t_j$, for $j = 1, \dots, n$, then $\nabla_1 - \nabla_2 \in H^0(\mathcal{E}nd(E_1)\otimes \Omega_C)$. Since $E_1$ is simple and $\Omega_C = \mathcal{O}_C$ we have $\nabla_1 - \nabla_2 = id_E \otimes \eta$ for some holomorphic $1$-form $\eta$. On the other hand, the definition of ${\rm Con}^{\nu}$ imposes that $\nabla_1$ and $\nabla_2$ have the same trace. Thus $\eta = 0$ and $\nabla_1 = \nabla_2$, proving that ${\rm Res}_D$ is injective.  

It remains to prove that ${\rm Res}_D$ is surjective. Fix $A_j \in {\rm End}(E_1)_{t_j}$ for $j = 1, \dots n$ such that $A_j$ has eigenvalues $\nu_j^+, \nu_j^-$. Since $E_1$ is simple and 
\[
\deg(E_1) + \sum_{j=1}^n {\rm tr}(A_j) = 1 + \sum_{j=1}^n \nu_j^+ + \nu_j^- = 0,
\]
\cite[Lemma 3.2]{BISWAS} guarantees the existence of a connection $\nabla_0$ with residue $A_j$ at $t_j$. Moreover, ${\rm tr}(\nabla_0) - \zeta = \theta \in  H^0(\Omega_C)$ a global holomorphic $1$-form. Hence $\nabla = \nabla_0 - \frac{1}{2}id_{E_1} \theta$ has the prescribed residues and ${\rm tr}(\nabla) = \zeta$ so that $\nabla\in {\rm Con}^{\nu}$.
\end{proof}

We will give an alternative proof of this theorem using explicit computations of Fuchsian systems, see Remark \ref{rem:consys}.


\begin{cor}
${\rm Con}^{\nu}$ is an affine variety. 
\end{cor}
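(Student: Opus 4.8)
The plan is to reduce the statement to the affineness of $S^n$ and then to that of $S$ itself. By Theorem~\ref{thm:parconSn} the map $\Par\colon {\rm Con}^{\nu}\to S^n$ is an isomorphism, so ${\rm Con}^{\nu}\simeq S^n$; since affineness is preserved under isomorphism, it suffices to show that $S^n$ is affine. A finite product of affine varieties is affine, so the whole problem collapses to proving that the single factor $S=(\mathbb P^1\times\mathbb P^1)\setminus\Delta$ is affine.

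To establish that $S$ is affine, first I would observe that the diagonal $\Delta$ is an effective divisor of bidegree $(1,1)$ on $\mathbb P^1\times\mathbb P^1$, and that such divisors are (very) ample. The cleanest way to turn this into a proof is through the Segre embedding $\sigma\colon \mathbb P^1\times\mathbb P^1\hookrightarrow\mathbb P^3$, whose image is the smooth quadric $Q=\{z_{00}z_{11}=z_{01}z_{10}\}$; under $\sigma$ the diagonal becomes a hyperplane section, namely $\Delta=\sigma^{-1}(Q\cap H)$ with $H=\{z_{01}=z_{10}\}$. Since $\mathbb P^3\setminus H\simeq\mathbb A^3$, the locus $Q\setminus H$ is a closed subvariety of an affine space and is therefore affine. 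Because $\sigma$ is an isomorphism onto $Q$, it identifies $S$ with $Q\setminus H$, whence $S$ is affine. Equivalently, one may simply invoke the general fact that the complement of an effective ample divisor in a projective variety is affine.

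The main obstacle is a subtle one to avoid rather than to overcome: one must resist arguing affineness from the projection $S\to\mathbb P^1$, whose fibres $\mathbb P^1\setminus\{p\}\simeq\mathbb A^1$ are affine. Having affine fibres over a projective base does not force the total space to be affine, so the genuine input is the \emph{global} statement that the complement of an ample divisor is affine; this is exactly what the Segre computation above supplies. Once $S$ is known to be affine, taking the $n$-fold product and transporting along $\Par$ gives that ${\rm Con}^{\nu}$ is affine, completing the argument.
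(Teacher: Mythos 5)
Your proposal is correct and follows essentially the same route as the paper: reduce via the isomorphism $\Par\colon{\rm Con}^{\nu}\to S^n$ and then use that $S$ is affine because the diagonal $\Delta$ is an ample divisor on $\mathbb P^1\times\mathbb P^1$. The Segre-embedding computation you give is just an explicit verification of the ampleness fact that the paper invokes directly, so the two arguments coincide in substance.
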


\begin{proof}
Since the diagonal $\Delta\subset \mathbb{P}^1\times \mathbb{P}^1$ supports an ample divisor, its complement, $S$, is affine. Therefore ${\rm Con}^{\nu} \simeq S^n$ is also affine.
\end{proof}

In the next subsection we will see that $S^n$ is a local model for $\contotal_{st}$.

\subsection{Description of \texorpdfstring{$\contotal_{st}$}{Con st} }
From the definition, $\contotal_{st}$ is the space of (isomorphism classes of) connections $(E,\nabla)$ for which there exist $I\subset \{1, \dots, n\}$, with $|I|$ even, and $\epsilon \in \{-,+\}^n$ such that $(E, {\bf p}^\epsilon(\nabla)) \in {\rm Bun}^I$, i.e. $(E, {\bf p}^\epsilon(\nabla))$ is $\mu$-stable for any $\mu\in \mathfrak{C}_I$. For each $I$ and $\epsilon$ we define
\[
{\rm Con}_{I,\epsilon}^{\nu} := \left\{ (E,\nabla)\in \contotal \mid (E, {\bf p}^\epsilon(\nabla)) \in {\rm Bun}^I \right\},
\]
hence we get a decomposition
\begin{eqnarray}\label{coverforcon}
\contotal_{st} = \bigcup\limits_{I,\epsilon} {\rm Con}_{I,\epsilon}^{\nu}.
\end{eqnarray}
Note that ${\rm Con}_{\emptyset,\epsilon}^{\nu} = {\rm Con}^{\nu}$ for any $\epsilon$. Next we will see that, for generic $\nu$, each ${\rm Con}_{I,\epsilon}^{\nu}$ is isomorphic to $S^n$. More precisely, we will prove that ${\rm Con}_{I,\epsilon}^{\nu}$ is isomorphic to ${\rm Con}^{\lambda}$, for some eigenvalue $\lambda$ to be determined. Consider 
\[
\pi_\epsilon\colon {\rm Con}_{I,\epsilon}^{\nu} \longrightarrow {\rm Bun}^I
\]
the forgetful morphism.

\begin{prop}\label{prop:fiberpres}
The map $\elm_I$ induces a fiber-preserving isomorphism $\Phi_I^\epsilon$:
\[
\begin{tikzcd}[column sep = 33.pt]
	{\rm Con}_{I,\epsilon}^{\nu} \arrow[d, "\pi_\epsilon"'] \arrow[r, "\Phi_{I}^\epsilon"] & {\rm Con}^{\lambda}\arrow[d, "\pi_\epsilon"] \\
	{\rm Bun}^I\arrow[r, "\elm_{I}"] & {\rm Bun}
\end{tikzcd}
\]
for $\lambda=(\lambda_{1}^{+}, \lambda_{1}^{-}, \cdots, \lambda_{n}^{+}, \lambda_{n}^{-})$ defined as follows. If $k \not\in I$ then $\lambda_{k}^+ = \nu_{k}^+$ and $\lambda_{k}^- = \nu_{k}^-$, and if $k\in I$ then
\[
\lambda_{k}^{\epsilon_k} = \nu_{k}^{-\epsilon_k}+\frac{1}{2},  \text{ and }  \lambda_{k}^{-\epsilon_k} = \nu_{k}^{\epsilon_k}- \frac{1}{2},
\]
where $\{\epsilon_k,-\epsilon_k \}=\{+,-\}$.
\end{prop}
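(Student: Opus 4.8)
The plan is to verify that the elementary transformation $\elm_I$, which we already know gives an isomorphism $\operatorname{Bun}^I \to \operatorname{Bun}$ on the level of parabolic bundles, lifts to connections and transforms the eigenvalue data $\nu$ into the claimed $\lambda$. The essential point is that $\elm_I$ is a modular realization of a birational map $\mathbb{P}(E)\dashrightarrow \mathbb{P}(E')$ obtained by blowing up the parabolic points indexed by $I$ and blowing down the fibers; a logarithmic connection is a foliation-type object on these ruled surfaces, so it should transform functorially under this modification. I would first make precise how $\nabla$ induces a connection $\nabla'$ on $E'\otimes L_0$: starting from the sheaf inclusion $\alpha\colon E'\hookrightarrow E$ and the tensoring by the square root $L_0$ of $\mathcal{O}_C(\sum_{i\in I}t_i)$, one checks that $\nabla$ pulls back to a logarithmic connection on $E'$ with possibly modified residues at the points of $I$, and that tensoring by $(L_0,\zeta_0)$ (with $\zeta_0$ the connection on $L_0$ whose residue at $t_i$ is $\tfrac12$) produces a connection on $E'\otimes L_0$ with the prescribed trace $\zeta$.

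**Main steps.** First I would compute the local effect of the elementary transformation on the residue at a single point $t_k$ with $k\in I$. Working in a local trivialization where $\nabla|_U=d+A$ with $\operatorname{Res}_{t_k}(A)$ having eigenvalues $\nu_k^+,\nu_k^-$ and eigenspaces $p_k^+,p_k^-$, the transformation $\elm_I$ uses the parabolic direction $p_k^{\epsilon_k}$ (the one selected by $\epsilon$) as the center of the blow-up. The standard computation for elementary transformations of connections shows that the residue eigenvalue attached to the chosen direction $p_k^{\epsilon_k}$ drops by $1$ (it becomes the kernel of $\alpha_{t_k}$, which is the new parabolic direction $p_k'$), while the other eigenvalue is unchanged; this is exactly the gauge-transformation rule $A\mapsto g A g^{-1}+g\,dg^{-1}$ with $g$ a meromorphic gauge having a simple zero along $p_k^{\epsilon_k}$. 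Thus before tensoring, the eigenvalues at $t_k$ become $\nu_k^{\epsilon_k}-1$ (on the direction one blows up) and $\nu_k^{\delta_k}$ (unchanged). Tensoring by $(L_0,\zeta_0)$ then shifts \emph{both} eigenvalues at $t_k$ by $+\tfrac12$, giving
\[
\lambda_k^{\epsilon_k}=(\nu_k^{\delta_k})+\tfrac12,\qquad \lambda_k^{\delta_k}=(\nu_k^{\epsilon_k}-1)+\tfrac12=\nu_k^{\epsilon_k}-\tfrac12,
\]
which matches the statement (after tracking that the labels $\pm$ get swapped by the transformation, i.e. the eigenspace that was $p_k^{\epsilon_k}$ becomes the new $\delta_k$-eigenspace). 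For $k\notin I$ the map $\alpha_{t_k}$ is an isomorphism and both eigenvalues survive untouched, giving $\lambda_k^{\pm}=\nu_k^{\pm}$.

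**Checking the moduli-theoretic content.** Next I would verify that this construction is compatible with the chosen data: the new trace is $\operatorname{tr}(\nabla')=\zeta$ (using $\det(E'\otimes L_0)=\mathcal{O}_C(w_\infty)$ and that the residue shifts sum correctly, consistent with the Fuchs relation for $\lambda$), and that $(E'\otimes L_0,\nabla')$ lands in $\operatorname{Con}^\lambda$ precisely because $\elm_I$ sends $(E,{\bf p}^\epsilon(\nabla))\in\operatorname{Bun}^I$ to a parabolic bundle on $E_1$, i.e. into $\operatorname{Bun}$. The diagram then commutes by construction since on underlying parabolic bundles $\Phi_I^\epsilon$ is literally $\elm_I$ followed by $\pi_\epsilon$, and it is fiber-preserving for the same reason. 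Invertibility of $\Phi_I^\epsilon$ follows from the invertibility of $\elm_I$ together with the inverse elementary transformation on connections (which restores the original eigenvalues), so $\Phi_I^\epsilon$ is an isomorphism of moduli spaces.

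**Main obstacle.** The delicate point is the bookkeeping of eigenvalue labels and the half-integer shift: one must carefully track which eigenspace is used as the blow-up center, confirm that it is the new parabolic direction $p_k'=\ker(\alpha_{t_k})$, and ensure the $\pm$ labels on $\lambda$ are assigned so that the selected connection $\pi_\epsilon$ continues to pick out the same $\epsilon$-eigenspace after transformation (this is what makes the square fiber-\emph{preserving} rather than merely commuting up to a relabeling). I also need to check that the generic condition on $\nu$ transfers to $\lambda$—namely that $\lambda$ still satisfies $\lambda_1^{a_1}+\cdots+\lambda_n^{a_n}\notin\mathbb{Z}$ and $\lambda_k^+\neq\lambda_k^-$—so that $\operatorname{Con}^\lambda$ is a legitimate moduli space of the same type; this follows since the shifts by $\tfrac12$ and $1$ are compatible with $|I|$ even, but it deserves an explicit line. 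The residue computation itself is routine once set up correctly, so the real work is in the combinatorial correctness of the labeling.
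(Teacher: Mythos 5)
Your strategy coincides with the paper's: lift $\elm_I$ to connections by pulling back along $\alpha\colon E'\hookrightarrow E$ and twisting by a rank-one connection on the square root $L_0$, then track residues. The problem is that the key local computation is carried out with the wrong signs, and this matters precisely for the fiber-preservation claim. For the convention used here, $E'=\ker\bigl(E\to\bigoplus_{i\in I}E_{t_i}/p_i^{\epsilon_i}(\nabla)\bigr)$, which \emph{lowers} the degree ($\det E'=\det E\otimes\mathcal{O}_C(-\sum_{i\in I}t_i)$), a computation in a frame adapted to $p_k^{\epsilon_k}=(1:0)$, where $\alpha=\mathrm{diag}(1,x)$, shows that $\operatorname{Res}_{t_k}(\alpha^*\nabla)$ has eigenvalues $\nu_k^{\epsilon_k}$ (unchanged) and $\nu_k^{\delta_k}+1$, and it is the \emph{latter} that is attached to the new parabolic direction $p_k'=\ker(\alpha_{t_k})$; consequently $\operatorname{tr}(\alpha^*\nabla)$ has residue $\nu_k^++\nu_k^-+1$ at $t_k$, and the rank-one connection on $L_0$ must have residue $-\tfrac12$ (not $+\tfrac12$) at each $t_i$, $i\in I$, to restore the fixed trace $\zeta$. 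You instead assert that the eigenvalue of the center drops by $1$ and stays attached to $\ker(\alpha_{t_k})$ while the other eigenvalue is untouched, and then twist by $+\tfrac12$. The two sign errors cancel on the unordered pair, which is why you still recover the multiset $\{\nu_k^{\epsilon_k}-\tfrac12,\nu_k^{\delta_k}+\tfrac12\}$, but they do not cancel on the labels. A quick sanity check that would have caught the first error: the residues of $\operatorname{tr}(\alpha^*\nabla)$ must sum to $-\deg E'=|I|-1$, which forces the shift $+1$ at each $t_k$, $k\in I$, and rules out $-1$.

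The labeling error is not cosmetic. Commutativity of the square with $\pi_\epsilon$ on both vertical arrows requires $p_k'=\ker(\alpha_{t_k})$ to be the $\lambda_k^{\epsilon_k}$-eigenspace of $\Phi_I^\epsilon(\nabla)$, i.e. to carry $\lambda_k^{\epsilon_k}=\nu_k^{\delta_k}+\tfrac12$ — which is exactly what the correct computation delivers. Your bookkeeping assigns to $p_k'$ the value $\nu_k^{\epsilon_k}-\tfrac12=\lambda_k^{\delta_k}$, so under your labels the $\epsilon$-parabolic structure of the transformed connection at $t_k$ would be the complementary eigendirection rather than $p_k'$, and the square would fail to commute. (Note also that ``the direction that was $p_k^{\epsilon_k}$'' is not well defined: $\alpha_{t_k}$ has image $p_k^{\epsilon_k}$ and kernel $p_k'$, so the center has no distinguished preimage line.) This is exactly the ``delicate point'' you flag at the end, but the resolution you sketch gets it backwards. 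The remaining parts of your outline — trace compatibility, invertibility by reversing the construction, and transferring the genericity conditions from $\nu$ to $\lambda$ — are sound and agree with the paper (the last point is in fact deferred there to the corollary requiring $\nu_k^+-\nu_k^-\notin\{0,1,-1\}$).
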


\begin{proof}
Given $(E,\nabla)\in {\rm Con}_{I,\epsilon}^{\nu}$ we will perform an elementary transformation centered in ${\bf p}^{\epsilon}(\nabla)$. Recall that $\elm_I$ sends $(E,{\bf p}^{\epsilon}(\nabla))$ to $\Big(E'\otimes L, {\bf p'}\Big)$ where $E'$ is obtained from the exact sequence 
\[
0\longrightarrow E' \stackrel{\alpha}{\longrightarrow} E \longrightarrow \bigoplus_{i\in I}\left(E_{t_i}/p_i^{\epsilon_i}(\nabla)\right) \longrightarrow 0,
\] 
and $L$ is a square root $\mathcal O_C\left(\sum_{i\in I}t_i\right)$. The pullback connection $\alpha^*(\nabla)$ has the following property, which can be verified in local coordinates. For $k\not\in I$, the eigenvalue at $t_k$ are the same $\{ \nu_k^+, \nu_k^-\}$. For $k \in I$, the eigenvalues are 
\[
(\nu_{k}^{-\epsilon_k}+1, \nu_{k}^{\epsilon_k})
\] 
and $\ker( \alpha_{t_k})$ corresponds to $\nu_{k}^{-\epsilon_k}+1$.

Now let $\xi\colon L\rightarrow L\otimes\Omega_C^1(D)$ be the rank one connection defined as follows. Let $\{U_i\}$ be a trivializing cover for $E,E'$ and $L$ and let $\{G_{ij}\}, \{G_{ij}'\}$ and $\{h_{ij}\}$ be the respective cocycles. Assume further that $\det G_{ij} = h_{ij}^2\det G_{ij}'$ for every pair $(i,j)$. Then 
\[
\xi|_{U_i} = d - \frac{1}{2}{\rm tr}\left(\alpha_i^{-1}d\alpha_i\right),
\]
where $\alpha_i$ is the local expression for $\alpha$. Note that ${\rm tr}(\alpha^*(\nabla) \otimes \xi) = {\rm tr}(\nabla)$ and 
\[
{\rm Res}_{t_k}\xi = \begin{cases}
	-\frac{1}{2}, & k\in I;\\
	0, & k \not\in I.
\end{cases}
\]

The map $\Phi_I^\epsilon$ is then defined as 
\[
\Phi_I^\epsilon(E, \nabla) = (E'\otimes L, \alpha^*(\nabla)\otimes \xi).
\]
Since it can be reversed by the same process, we have the isomorphism. Moreover, the diagram in the statement commutes from the construction of $\Phi_I^\epsilon$.
\end{proof}

To give an isomorphism ${\rm Con}_{I,\epsilon}^{\nu}\simeq S^n$ we just need to require that $\lambda$ satisfies the hypothesis of Theorem \ref{thm:parconSn}, i.e. $\lambda_{k}^+ \neq \lambda_{k}^-$ and in Remark \ref{rem:condition}.

\begin{cor}
If, for every $k= 1, \dots, n$, $\nu_{k}^{+}-\nu_{k}^{-}\notin \{0, 1, -1\}$ then, for every $I\subset \{1,\dots, n\}$, with $|I|$ even, and every $\epsilon \in \{+,-\}^n$,
\[
{\rm Con}_{I,\epsilon}^{\nu} \simeq S^n.
\]
\end{cor}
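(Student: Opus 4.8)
The plan is to reduce everything to Theorem \ref{thm:parconSn} through the isomorphism $\Phi_I^\epsilon$ produced in Proposition \ref{prop:fiberpres}. That proposition already gives ${\rm Con}_{I,\epsilon}^{\nu}\simeq {\rm Con}^{\lambda}$ with $\lambda$ explicitly described, so the only remaining task is to check that the eigenvalue datum $\lambda$ again satisfies the standing hypotheses of Remark \ref{rem:condition} together with the condition $\lambda_k^+\neq\lambda_k^-$. Once this is done, Theorem \ref{thm:parconSn} applies to ${\rm Con}^{\lambda}$ and yields ${\rm Con}^{\lambda}\simeq S^n$, hence ${\rm Con}_{I,\epsilon}^{\nu}\simeq S^n$.

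First I would verify that $\lambda_k^+\neq\lambda_k^-$ for every $k$. For $k\notin I$ this is immediate, since $\lambda_k^{\pm}=\nu_k^{\pm}$ and $\nu_k^+\neq\nu_k^-$ by the standing assumptions. For $k\in I$, unwinding the formula in Proposition \ref{prop:fiberpres} gives $\lambda_k^+-\lambda_k^-=\pm 1-(\nu_k^+-\nu_k^-)$, which vanishes exactly when $\nu_k^+-\nu_k^-\in\{1,-1\}$. This is precisely where the extra hypothesis of the corollary enters: the value $0$ is already excluded by $\nu_k^+\neq\nu_k^-$, and the values $\pm 1$ are excluded by $\nu_k^+-\nu_k^-\notin\{1,-1\}$, so the combined assumption $\nu_k^+-\nu_k^-\notin\{0,1,-1\}$ is exactly what guarantees $\lambda_k^+\neq\lambda_k^-$ across all indices.

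The genuinely structural point is the generic (non-integrality) condition, and this is the step I expect to carry the weight of the argument. For $k\in I$ each eigenvalue $\lambda_k^{a_k}$ is of the form $\nu_k^{c_k}\pm\tfrac12$ for a suitable sign $c_k$, and as $a_k$ ranges over $\{+,-\}$ so does $c_k$; for $k\notin I$ we simply have $\lambda_k^{a_k}=\nu_k^{a_k}$. Hence any partial sum $\lambda_1^{a_1}+\cdots+\lambda_n^{a_n}$ equals $\nu_1^{c_1}+\cdots+\nu_n^{c_n}$ plus a contribution $\sum_{k\in I}(\pm\tfrac12)$. Since $|I|$ is even, this accumulated contribution of half-integers is an integer, so the non-integrality of $\sum_k\nu_k^{c_k}$ (the generic condition on $\nu$) forces the non-integrality of $\sum_k\lambda_k^{a_k}$. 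The Fuchs relation is preserved for a parallel reason: the two half-integer shifts cancel when $k\in I$, so $\lambda_k^++\lambda_k^-=\nu_k^++\nu_k^-$ for every $k$; this can also be read off directly from the identity ${\rm tr}(\alpha^*(\nabla)\otimes\xi)={\rm tr}(\nabla)$ established in the proof of Proposition \ref{prop:fiberpres}.

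With all three conditions verified, $\lambda$ meets the hypotheses of Theorem \ref{thm:parconSn}, giving ${\rm Con}^{\lambda}\simeq S^n$ and therefore ${\rm Con}_{I,\epsilon}^{\nu}\simeq S^n$. The only delicate point is the non-integrality check, where the essential input is the assumption that $|I|$ is even: this is exactly what ensures that the accumulated half-integer shifts land in $\mathbb{Z}$ and do not disturb the generic condition inherited from $\nu$.
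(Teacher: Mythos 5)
Your proposal is correct and follows the same route as the paper: reduce to ${\rm Con}^{\lambda}$ via the isomorphism $\Phi_I^\epsilon$ of Proposition \ref{prop:fiberpres}, check the hypotheses on $\lambda$, and conclude with Theorem \ref{thm:parconSn}. You are in fact slightly more thorough than the paper's one-line proof, which only verifies $\lambda_k^+\neq\lambda_k^-$ explicitly; your additional checks that the non-integrality condition and the Fuchs relation survive the half-integer shifts (using that $|I|$ is even) are correct and worth recording.
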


\begin{proof}
For $k\not\in I$ we have $\lambda_{k}^+ - \lambda^- = \nu_{k}^+ - \nu_{k}^-$ and for $k\in I$ we have $\lambda_{k}^+ - \lambda^- = \nu_{k}^- - \nu_{k}^+ \pm 1$. The result follows from Proposition \ref{prop:fiberpres} and Theorem \ref{thm:parconSn} once we have $\nu_{k}^{+}-\nu_{k}^{-}\notin \{0, 1, -1\}$.
\end{proof}

Hereafter we will also assume that $\nu_{k}^{+}-\nu_{k}^{-}\notin \{0, 1, -1\}$ for every $k = 1, \dots, n$.
We will also denote by $\pi_\epsilon$ the projection $\pi_\epsilon\colon S^n \rightarrow (\mathbb{P}^1)^n$ that makes the following diagram commute
\[
\begin{tikzcd}[column sep = 33.pt]
	{\rm Con}_{I,\epsilon}^{\nu} \arrow[d, "\pi_\epsilon"'] \arrow[r] & S^n \arrow[d, "\pi_\epsilon"] \\
	{\rm Bun}^I\arrow[r] & (\mathbb{P}^1)^n
\end{tikzcd}
\]
and we define $\widetilde{\Gamma}_{I,\epsilon} = \pi_\epsilon^{-1}( \Gamma_I)$, where $\Gamma_I$ is the hypersurface from Definition \ref{def:gammaI}.

\begin{prop}\label{prop:complement}
	Let $I, J\subset \{1,\dots, n\}$ with even cardinalities and fix $\epsilon \in\{+,-\}^n$. Then 
	\[
	{\rm Con}^{\nu}_{I,\epsilon} \setminus {\rm Con}_{J,\epsilon}^{\nu} \simeq {\rm Con}^{\nu}_{J,\epsilon} \setminus {\rm Con}_{I,\epsilon}^{\nu} \simeq \widetilde{\Gamma}_{I\Delta J,\epsilon}
	\]
	where $I\Delta J = (I\cup J) \setminus (I\cap J)$ is their symmetric difference.
\end{prop}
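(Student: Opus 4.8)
The plan is to carry out the whole analysis inside the chart ${\rm Con}_{I,\epsilon}^{\nu}\simeq S^n$ furnished by Proposition \ref{prop:fiberpres} and Theorem \ref{thm:parconSn}, and to read off membership in ${\rm Con}_{J,\epsilon}^{\nu}$ as a condition on the underlying parabolic bundle after transport to ${\rm Bun}$ by the elementary transformation $\elm_I$. Concretely, composing $\Phi_I^\epsilon$ with $\Par$ identifies ${\rm Con}_{I,\epsilon}^{\nu}$ with $S^n$, and by the commuting square of Proposition \ref{prop:fiberpres} the resulting projection $S^n\to (\mathbb{P}^1)^n={\rm Bun}$ sends $(E,\nabla)$ to $\elm_I(E,{\bf p}^\epsilon(\nabla))$, a parabolic bundle whose underlying vector bundle is $E_1$: indeed $(E,\nabla)\in{\rm Con}_{I,\epsilon}^{\nu}$ forces $(E,{\bf p}^\epsilon(\nabla))\in{\rm Bun}^I$, which $\elm_I$ carries isomorphically into ${\rm Bun}$. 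Thus the whole question reduces to characterizing, for a point of ${\rm Bun}=(\mathbb{P}^1)^n$, when the corresponding connection fails to be $\mathfrak{C}_J$-stable. First I would note that $|I\Delta J|$ is even, so that $\varphi_{I\Delta J}$, $\mathfrak{C}_{I\Delta J}$ and $\Gamma_{I\Delta J}$ are all defined.

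Next I would record the group law on the weight involutions, $\varphi_I\circ\varphi_J=\varphi_{I\Delta J}$ (the flips on $I\cap J$ cancel), whence $\varphi_I$ is an involution and $\varphi_I(\mathfrak{C}_J)=\varphi_I(\varphi_J(\mathfrak{C}))=\varphi_{I\Delta J}(\mathfrak{C})=\mathfrak{C}_{I\Delta J}$. Combined with the compatibility of $\elm_I$ with $\varphi_I$ recalled before Definition \ref{def:bunI} — namely $(E,{\bf p})$ is $\mu$-(semi)stable if and only if $\elm_I(E,{\bf p})$ is $\varphi_I(\mu)$-(semi)stable, which is an equivalence since $\elm_I$ and $\varphi_I$ are both involutive — this yields: for $(E,\nabla)\in{\rm Con}_{I,\epsilon}^{\nu}$ the parabolic bundle $(E,{\bf p}^\epsilon(\nabla))$ is $\mathfrak{C}_J$-stable if and only if $\elm_I(E,{\bf p}^\epsilon(\nabla))$ is $\mathfrak{C}_{I\Delta J}$-stable. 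Since $\mathfrak{C}_{I\Delta J}$ is the interior of a chamber, no object is strictly semistable for such weights (that would place $\mu$ on a wall $H(d,\cdot)$), so stability and semistability coincide, and the failure of $\mathfrak{C}_{I\Delta J}$-stability for an $E_1$-parabolic bundle is exactly membership in the hypersurface $\Gamma_{I\Delta J}$ of Definition \ref{def:gammaI}.

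Putting these together, $(E,\nabla)\in{\rm Con}_{I,\epsilon}^{\nu}$ lies in ${\rm Con}_{J,\epsilon}^{\nu}$ exactly when its image in $(\mathbb{P}^1)^n$ avoids $\Gamma_{I\Delta J}$; equivalently, under ${\rm Con}_{I,\epsilon}^{\nu}\simeq S^n$ the difference ${\rm Con}_{I,\epsilon}^{\nu}\setminus{\rm Con}_{J,\epsilon}^{\nu}$ corresponds precisely to $\pi_\epsilon^{-1}(\Gamma_{I\Delta J})=\widetilde{\Gamma}_{I\Delta J,\epsilon}$, which is the first isomorphism. The second one follows by running the identical argument inside the chart ${\rm Con}_{J,\epsilon}^{\nu}\simeq S^n$ with the roles of $I$ and $J$ exchanged, using $I\Delta J=J\Delta I$; both constructions land in the same abstractly defined variety $\widetilde{\Gamma}_{I\Delta J,\epsilon}\subset S^n$.

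The main obstacle is making the second step airtight: one must verify that $\elm_I$ genuinely transports $\mathfrak{C}_J$-(in)stability to $\mathfrak{C}_{I\Delta J}$-(in)stability \emph{object by object}, not merely that it induces an isomorphism of the two GIT quotients, and that the image of every point of the chart really has underlying bundle $E_1$, so that the hypersurface $\Gamma_{I\Delta J}$ — defined only on the $E_1$-locus — is the correct thing to pull back. Both points rest on the observation that $(E,\nabla)\in{\rm Con}_{I,\epsilon}^{\nu}$ already pins $(E,{\bf p}^\epsilon(\nabla))$ to ${\rm Bun}^I$, so no splitting of the underlying bundle can occur after applying $\elm_I$; I expect this to be the only place requiring genuine care, the remainder being bookkeeping with the involutions $\varphi_I$.
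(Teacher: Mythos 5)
Your proof is correct and follows essentially the same route as the paper's: transport everything to the $E_1$-chart via $\Phi_I^\epsilon$ (equivalently $\elm_I$), use the composition rule $\elm_I\circ\elm_J=\elm_{I\Delta J}$ (your $\varphi_I\circ\varphi_J=\varphi_{I\Delta J}$), and identify the locus that fails $\mathfrak{C}_{I\Delta J}$-stability with $\Gamma_{I\Delta J}$ before pulling back along $\pi_\epsilon$. Your extra care about the object-by-object compatibility of $\elm_I$ with the weight flips and about the underlying bundle being $E_1$ is exactly what the paper delegates to Remark \ref{rem:elmGammaI} and the earlier discussion of $\elm_I$, so there is no substantive difference.
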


\begin{proof}Note that $|I\Delta J|$ is even and $\elm_I \circ \elm_J = \elm_{I\Delta J}$; hence $\Phi_I^\epsilon$ gives an isomorphism 
\[
{\rm Con}^{\nu}_{I,\epsilon} \setminus {\rm Con}_{J,\epsilon}^{\nu} \simeq {\rm Con}^{\lambda} \setminus {\rm Con}_{I\Delta J,\epsilon}^{\lambda}.
\]
An element of ${\rm Con}^{\lambda} \setminus {\rm Con}_{I\Delta J,\epsilon}^{\lambda}$ is a connection whose underlying quasi-parabolic bundle $(E_{1}, \textbf{p}^{\epsilon}({\nabla}))$ is stable for the weights $\mu\in\mathfrak{C}$ but its image under $\elm_{I\Delta J}$ is unstable or, equivalently, it is $\mu$-stable but $\varphi_{I\Delta J}(\mu)$-unstable. Therefore, using Theorem \ref{thm:parconSn} and Remark \ref{rem:elmGammaI} we get
\[
{\rm Par}({\rm Con}^{\lambda}\setminus {\rm Con}_{I\Delta J,\epsilon}^{\lambda}) 	 = \widetilde{\Gamma}_{I\Delta J,\epsilon}.
\]
\end{proof}

\begin{thm}\label{thm:glue}
Assume and $\nu_1^{a_1}+\cdots +\nu_n^{a_n} \notin \mathbb Z$, for any $a_{k}\in \{+,-\}$, and that $\nu_{k}^{+}-\nu_{k}^{-}\notin \{0, 1, -1\}$ for $k\in\{1, \cdots, n\}$. Then $\contotal_{st}$ is obtained by gluing a finite number of copies of $S^n$ via birational maps
\[
\Psi_{J,I}^{\delta,\epsilon}\colon S^n \dashrightarrow S^n.
\]
depending on $\epsilon, \delta \in \{+,-\}^n$ and $I, J \subset \{1, \cdots, n\}$.
Moreover, if $\epsilon = \delta$, then $\Psi_{J,I}^{\delta,\epsilon}$ preserves the fibers of $\pi_\epsilon$. 
\end{thm}

\begin{proof} Since $\contotal_{st} = \cup_{I,\epsilon} {\rm Con}_{I,\epsilon}^{\nu}$, \eqref{coverforcon}, we may give local charts ${\rm Par } \circ \Phi_I^\epsilon \colon {\rm Con}^\nu_{I,\epsilon} \rightarrow S^n$. Note that ${\rm Con}^\nu_{I,\epsilon}\cap {\rm Con}^\nu_{J,\delta}$ is Zariski open. Then the maps $\Psi_{J,I}^{\delta,\epsilon}$ are defined by extending the transition maps. They fit in the following diagram.
\[
\begin{tikzcd}[column sep = 23pt]
S^n\arrow[d, "\pi_\epsilon"]\arrow[rrrrrr, bend left = 20, dashed, start anchor ={[yshift = 8pt]}, end anchor = {[yshift = 8pt]}, "\Psi_{J,I}^{\delta,\epsilon}"'] 
& {\rm Con}^\lambda \arrow[l, "{\rm Par}"']\arrow[d, "\pi_\epsilon"']
& {\rm Con}^\nu_{I,\epsilon}\arrow[d, "\pi_\epsilon"'] \arrow[l, "\Phi_I^\epsilon"'] 
& {\rm Con}^\nu_{I,\epsilon}\cap {\rm Con}^\nu_{J,\delta} \arrow[l,symbol = \subset]\arrow[r,symbol = \subset]
& {\rm Con}^\nu_{J,\delta}\arrow[r, "\Phi_J^\delta"]\arrow[d, "\pi_\delta"] 
& {\rm Con}^\rho\arrow[d,"\pi_\delta"] \arrow[r,"{\rm Par}"] 
& S^n\arrow[d,"\pi_\delta"] \\ 
(\mathbb{P}^1)^n 
& {\rm Bun}\arrow[l] 
& {\rm Bun}^I \arrow[l]
&& {\rm Bun}^J\arrow[r] 
& {\rm Bun}\arrow[r]
& (\mathbb{P}^1)^n
\end{tikzcd}
\]

If $\delta = \epsilon$ then we can complete the diagram. By abuse of notation, we may write
\[
{\rm Bun}^{IJ} = {\rm Bun}^I \cap {\rm Bun}^J
\]
the space parabolic bundles in ${\rm Bun}^I$ that are also $\mu$-stable for $\mu \in \mathfrak{C}_J$ and vice-versa. Thus we get a birational map ${\rm Bun}^I \dashrightarrow {\rm Bun}^J$ extending the identity on ${\rm Bun}^{IJ}$. Hence we get
\[
\begin{tikzcd}
	S^n \arrow[d, "\pi_\epsilon"']\arrow[r, dashed, "\Psi_{J,I}^{\epsilon,\epsilon}"]
	& S^n \arrow[d, "\pi_\epsilon"]\\
	(\mathbb{P}^1)^n \arrow[r, dashed]
	& 	(\mathbb{P}^1)^n 
\end{tikzcd} 
\] 
\end{proof}

\subsection{Connections with unstable parabolic bundles.}
Now we describe the space $\ZZ_n$ of (isomorphism classes of) connections such that every underlying parabolic bundle is not semistable. We will show that it falls in two cases:
\begin{enumerate}
	\item If $n$ is even then $\ZZ_n= \emptyset$;
	\item If $n$ is odd then $\ZZ_n$ has four connected components, each isomorphic to $\mathbb{C}^{n}$.
\end{enumerate}
Note that in the last case $\dim \ZZ_{n} = n$. Let us make our first reduction.

\begin{lemma}\label{lem:redsigma}
If $n$ is even then $\ZZ_n$ is empty and if $n$ is odd then
\begin{equation}\label{sigma}
	\ZZ_{n} = \{ (E,\nabla)\in \contotal \mid E = L \oplus L^{-1}(w_\infty) \text{ with } L^2 = \mathcal{O}_C(D + w_\infty)\}.
\end{equation}
\end{lemma}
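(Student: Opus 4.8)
The plan is to examine, for each $(E,\nabla)\in\contotal$, the isomorphism type of the underlying vector bundle $E$ and to feed it into the classification of indecomposable quasi-parabolic bundles of Lemma \ref{lem:indec}. First I would eliminate the case $E=E_1$: by Remark \ref{rem:genindec} the generic condition makes every $(E,{\bf p}^\epsilon(\nabla))$ indecomposable, and when $E=E_1$ Proposition \ref{prop:1stchamb} places $(E,{\bf p}^\epsilon(\nabla))$ in ${\rm Bun}={\rm Bun}^\emptyset$, so $(E,\nabla)\in\contotal_{st}$ and contributes nothing to $\ZZ_n$. Thus I may assume $E=L\oplus L^{-1}(w_\infty)$ is decomposable, and, exchanging the two summands if needed, that $s:=\deg L\geq 1$, so $2s\geq 2$.

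The key step is a sharp degree bound coming from irreducibility. The generic condition on $\nu$ forbids $\nabla$-invariant sub-line-bundles (the residues of an invariant $L$ would be a choice $\nu_k^{\epsilon_k}$ whose sum is $-\deg L\in\mathbb{Z}$). Hence $L$ is not $\nabla$-invariant, so the $L^{-1}(w_\infty)$-component of $\nabla|_L$ is a nonzero $\mathcal{O}_C$-linear map $L\to L^{-1}(w_\infty)\otimes\Omega^1_C(D)$, that is, a nonzero section of $L^{-2}(D+w_\infty)$ (using $\Omega^1_C\cong\mathcal{O}_C$). Since this line bundle has degree $n+1-2s$, nonvanishing of its sections forces $2s\leq n+1$, with equality only when $L^{-2}(D+w_\infty)=\mathcal{O}_C$, i.e. $L^2=\mathcal{O}_C(D+w_\infty)$. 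So $2\leq 2s\leq n+1$, and $2s=n+1$ is exactly case \ref{deE0} of Lemma \ref{lem:indec}.

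I then split into the two regimes. If $2s\leq n$, I would use that the two eigendirections $p_k^+(\nabla)\neq p_k^-(\nabla)$ at each pole (as $\nu_k^+\neq\nu_k^-$) can coincide with the point $L_{t_k}$ for at most one sign; choosing $\epsilon_k$ with $p_k^{\epsilon_k}(\nabla)\neq L_{t_k}$ for all $k$ yields a quasi-parabolic structure with every direction outside $L$. This is the hypothesis of Lemma \ref{lem:stparbun}, which produces $I$ with $(E,{\bf p}^\epsilon(\nabla))\in{\rm Bun}^I$; hence $(E,\nabla)\in\contotal_{st}$. If instead $2s=n+1$ (which forces $n$ odd and $L^2=\mathcal{O}_C(D+w_\infty)$), then for every $\epsilon$ the indecomposable bundle $(E,{\bf p}^\epsilon(\nabla))$ falls in case \ref{deE0} of Lemma \ref{lem:indec}, which by Remark \ref{rem:unstparbun} is never $\mu$-semistable for any weight; hence $(E,\nabla)\in\ZZ_n$.

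Putting this together gives both claims: for $n$ even the parity of $2s$ together with $2s\leq n+1$ forces $2s\leq n$, so every connection lies in $\contotal_{st}$ and $\ZZ_n=\emptyset$; for $n$ odd the connections outside $\contotal_{st}$ are exactly those with $E=L\oplus L^{-1}(w_\infty)$ and $L^2=\mathcal{O}_C(D+w_\infty)$, which is the stated description of $\ZZ_n$. The main obstacle I anticipate is the degree estimate in the second paragraph: one must invoke genuine irreducibility of $\nabla$ — not just indecomposability of the parabolic bundle — to guarantee the off-diagonal block is nonzero, and keep careful track of the twist by $\Omega^1_C(D)$ so as to land on the sharp bound $2s\leq n+1$ and correctly identify the equality case $L^2=\mathcal{O}_C(D+w_\infty)$.
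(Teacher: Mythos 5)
Your proof is correct. It shares the overall skeleton of the paper's argument — rule out $E=E_1$, use Remark \ref{rem:genindec} for indecomposability, exclude the range $2\le 2\deg L\le n$ by means of Lemma \ref{lem:stparbun}, and settle the converse inclusion with Remark \ref{rem:unstparbun} — but the step that pins down $2\deg L=n+1$ and $L^2=\mathcal{O}_C(D+w_\infty)$ is done differently. The paper obtains this purely at the level of quasi-parabolic bundles, by feeding indecomposability into the trichotomy of Lemma \ref{lem:indec} and observing that cases (1) and (2) are excluded, so case (3) must hold. You instead extract it from the connection itself: irreducibility (forced by $\nu_1^{\epsilon_1}+\cdots+\nu_n^{\epsilon_n}\notin\mathbb{Z}$) makes the second fundamental form of $L\subset E$ a nonzero $\mathcal{O}_C$-linear map, i.e.\ a nonzero section of $L^{-2}(w_\infty+D)$, whence $2\deg L\le n+1$ with equality exactly when $L^2=\mathcal{O}_C(D+w_\infty)$. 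This is a clean alternative that bypasses the finer content of Lemma \ref{lem:indec}: you only quote its case (3) to match the hypotheses of Remark \ref{rem:unstparbun}, and even that is not strictly needed, since the computation there gives ${\rm Stab}_\mu(L)\le -n+\sum_j\mu_j<0$ already from $2\deg L=n+1$ alone, whatever the configuration of directions. What the paper's route buys is that the dichotomy for parabolic bundles is established once and then merely cited; what yours buys is a direct, sharp degree bound that explains intrinsically where the line bundle $\mathcal{O}_C(D+w_\infty)$ comes from.
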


\begin{proof}
Suppose that $\ZZ_n \neq \emptyset$ and let $(E,\nabla)\in \ZZ_n$. We will show that $E = L \oplus L^{-1}(w_\infty)$ with $L^2 = \mathcal{O}_C(D + w_\infty)$. In particular, $n$ must be odd.

We know that $(E, {\bf p}^\epsilon(\nabla))$ is indecomposable for any $\epsilon$, see Remark \ref{rem:genindec}. Then we may apply Lemma \ref{lem:indec}. We cannot have $E = E_1$ since it would imply $(E_1,\nabla)\in {\rm Con}^\nu$; hence $E = L \oplus L^{-1}(w_\infty)$. In this case, we may take $ \epsilon $ such that $p_k^{\epsilon_k}(\nabla)\not\subset L_{t_k}$ for every $k=1, \dots , n$. Note that, by Lemma \ref{lem:stparbun}, the case $2\deg L \leq n$ is not possible. Therefore Lemma \ref{lem:indec} implies that $L^2 = \mathcal{O}_C(D + w_\infty)$. 

Conversely, any connection on $L \oplus L^{-1}(w_\infty)$, with $L^2 = \mathcal{O}_C(D + w_\infty)$, represents a point in $\ZZ_n$, see Remark \ref{rem:unstparbun}.
\end{proof}

Next we will describe the connections in $\ZZ_{n}$. In order to do so, we compute the logarithmic Atiyah class $\phi^A_E \in {\rm End}(E)^\vee$ whose vanishing establishes the existence of a connection with prescribed residues, see \cite{BISWAS}. Let $T \in{\rm End}(E)$ then $\phi^A_E$ is defined by
\[
\phi^A_E(T) = \phi^0_E(T) + \sum_{j=1}^n {\rm tr}\left(A_j T(t_j)\right)
\]
where $A_j$ is the residue endomorphism over $t_j$ and $\phi^0_E$ is the classical Atiyah class, see \cite{At}. In our case,
\[
A_j = \begin{pmatrix}
	u_j & a_j \\ v_j & b_j
\end{pmatrix} \begin{pmatrix}
	\nu_j^+ & 0 \\ 0 & \nu_j^-
\end{pmatrix}\begin{pmatrix}
	b_j & -a_j \\ -v_j & u_j
\end{pmatrix}
\]
where $u_jb_j -a_jv_j = 1$. Here we may take local coordinates around each $t_j$ such that $L$ and $L^{-1}(w_\infty)$ correspond to $(1:0)$ and $(0:1)$, respectively.

Note that any direction $p_k^{\epsilon_k}(\nabla)$ lies outside $L$, otherwise there would exist a choice of parabolic directions ${\bf p}^{\epsilon}(\nabla)$ such that $(E,{\bf p}^{\epsilon}(\nabla))$ is decomposable and this would force a relation on eigenvalues $\nu$. Indeed, we can find an embedding of $L^{-1}(w_{\infty})$ passing through $n-1$ directions away from $L$. Then we suppose, without loss of generality, that our directions are as in the Figure \ref{fig:confsigma}. In particular $(u_j, v_j)= (0,1)$ and $a_j =-1$ for $j\geq 2$, and $u_1v_1 \neq 0$. Up to applying a diagonal automorphism of $E$ we suppose that $u_1 = v_1 = 1$, i.e. $p_1^+(\nabla) =(1:1)$.

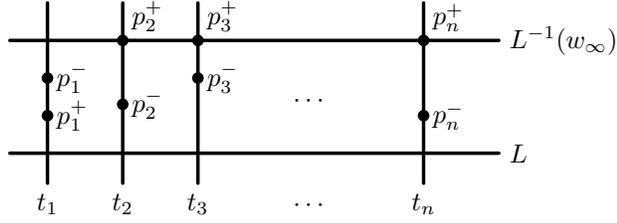
\begin{figure}[t]
	\centering
	\begin{tikzpicture}[line cap=round,line join=round,>=triangle 45,x=1.0cm,y=1.0cm]
		\begin{axis}[
			x=1.0cm,y=0.5cm,
			axis lines= left,
			axis line style={draw=none},
			ymajorgrids=false,
			xmajorgrids=false,
			xmin=2.8,
			xmax=11.8,
			ymin=0.,
			ymax=6.5,
			xtick= \empty,
			ytick=\empty,] 
			\draw [line width=1.3 pt] (4.,6.)-- (4.,1.2) node[anchor=north] {$t_1$};
			\draw [line width=1.3 pt] (5.,6.)-- (5.,1.2) node[anchor=north] {$t_2$};
			\draw [line width=1.3 pt] (6.,6.)-- (6.,1.2) node[anchor=north] {$t_3$};
			\draw [line width=1.3 pt] (9.,6.)-- (9.,1.2) node[anchor=north] {$t_n$};
			\draw [line width=1.33 pt] (3.5,5.)-- (10.,5.) node[anchor= west] {$L^{-1}(w_\infty)$} ;
			\draw [line width=1.33 pt] (3.5,2.)-- (10.,2.) node[anchor=west] {$L$};
			\fill[color=black] ( 5.,5.) circle (2.2pt)node[anchor=south west] { $p_2^+$};
			\fill[color=black] ( 6.,5.) circle (2.2pt)node[anchor=south west] { $p_3^+$};
			\fill[color=black] ( 9.,5.) circle (2.2pt)node[anchor=south west] { $p_n^+$};
			\fill[color=black] ( 5.,3.3) circle (2.2pt)node[anchor= west] { $p_2^-$};
			\fill[color=black] ( 6.,4.) circle (2.2pt)node[anchor= west] { $p_3^-$};
			\fill[color=black] (9.,3.) circle (2.2pt)node[anchor= west] { $p_n^-$};
			\fill[color=black] ( 4.,4.) circle (2.2pt)node[anchor= west] { $p_1^-$};
			\fill[color=black] ( 4.,3.) circle (2.2pt)node[anchor= west] { $p_1^+$};
			\draw (7.5,0.6) node{ $\cdots$};
			\draw (7.5,3.4) node{ $\cdots$};
		\end{axis}
	\end{tikzpicture}
	\caption{Possible configuration of directions for $(E, \nabla) \in \ZZ_n$.}
	\label{fig:confsigma}
\end{figure}

Note that ${\rm End}(E)$ is generated (as a vector space) by the identity, nilpotent endomorphisms and the projection to $L$. For the identity, $\phi^A_E(1_E)$ gives the Fuchs relation that we already know is valid. Let $\beta \in {\rm H^0}(C, L^2(-w_\infty))$ and define
\[
P(\beta) := \phi^A_E\left(\begin{pmatrix}
	0 & \beta \\ 0 & 0
\end{pmatrix} \right) = b_1\beta(t_1)(\nu_1^+-\nu_1^-) + \sum_{j \geq 2}b_j\beta(t_j)(\nu_j^+-\nu_j^-).
\]
For $j\geq 2$ let $\beta_j \in {\rm H^0}(C, L^2(-w_\infty))$ with the following property: $\beta_j(t_k) = 0$ if $k \neq 1,j$ and $\beta_j(t_j) = 1$. These sections are unique. In particular, $\beta_j(t_1) \neq 0$ and we have
\[
P(\beta_j) = b_1\beta_j(t_1)(\nu_1^+-\nu_1^-) + b_j(\nu_j^+-\nu_j^-)
\] 
Note that the image of evaluation map ${\rm H^0}(C, L^2(-w_\infty)) \rightarrow \mathbb{C}^n$, $\beta \mapsto (\beta(t_1), \dots, \beta(t_n))$, has dimension $(n-1)$; hence the images of the $\beta_j$ define basis. Therefore $P(\beta)=0$ for every $\beta \in {\rm H^0}(C, L^2(-w_\infty))$ if and only if $P(\beta_j) = 0$ for $j\geq 2$, i.e. the $\nu_j^-$ direction is 
\[
(-1 : b_j) =\left(\nu_j^+ -\nu_j^- : b_1\beta_j(t_1)(\nu_1^+-\nu_1^-) \right) 
\]

For the projection to $L$ we have 
\[
\phi^A_E\left(\begin{pmatrix}
	1 & 0 \\ 0 & 0
\end{pmatrix} \right) = \deg L + \sum_{j=1}^n u_jb_j\nu_j^+ - a_jv_j\nu_j^- = b_1(\nu_1^+-\nu_1^-) + \frac{n+1}{2} + \sum_{j=1}^n \nu_j^- =0.
\]
This implies that the directions over $t_1$ are $p_1^+(\nabla) =(1:1)$ and
\[
p_1^-(\nabla)= \left(b_1 -1 : b_1 \right) = \left( \frac{n+1}{2} + \nu_1^+ + \sum_{ j\geq 2}^n \nu_j^- : \frac{n+1}{2} + \sum_{j=1}^n \nu_j^-\right)
\]
Therefore the residues are completely independent of the isomorphism class of $(E,\nabla)$, i.e. the residues of every connection in $\ZZ_n$ are, up to ${\rm Aut}(E)$, in the above configuration. Also note that any two connections with these residues differ by an element of ${\rm Hom}(E, E\otimes \Omega_C)$ with vanishing trace. From this discussion we can prove the following result.

\begin{thm}\label{thm:dessigma}
	Let $n$ be an odd integer. Then $\ZZ_n$ has four connected components, each of them being isomorphic to $\mathbb{C}^n$. 
\end{thm}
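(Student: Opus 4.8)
The plan is to build on the explicit computation carried out just before the statement, which already shows that for $n$ odd every connection in $\ZZ_n$ has underlying bundle $E = L \oplus L^{-1}(w_\infty)$ with $L^2 = \mathcal{O}_C(D + w_\infty)$, and that the residual configuration of such a connection is rigid: up to $\mathrm{Aut}(E)$ the residues are forced into the configuration of Figure \ref{fig:confsigma}. First I would count the discrete choices. The condition $L^2 = \mathcal{O}_C(D + w_\infty)$ determines $L$ only up to tensoring by a $2$-torsion line bundle of $C$; since $C$ is an elliptic curve there are exactly four such $2$-torsion points $w_0, w_1, w_\lambda, w_\infty$, giving four distinct choices of $L$. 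These four choices are what I expect to produce the four connected components, so the first key step is to verify that distinct torsion twists give non-isomorphic (hence disconnected) families, and that no other discrete datum intervenes.

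Next I would fix one such $L$ and parametrize the connections with underlying bundle $E = L \oplus L^{-1}(w_\infty)$ and the prescribed residues. The computation preceding the theorem shows the residues are \emph{completely} determined up to $\mathrm{Aut}(E)$; the remaining freedom is exactly that two connections with the same residues differ by an element of $\mathrm{Hom}(E, E \otimes \Omega_C^1)$ with vanishing trace, i.e. a trace-free Higgs field on $E$. So the second key step is to identify this space of trace-free Higgs fields and compute its dimension. Writing $E = L \oplus L^{-1}(w_\infty)$, such a Higgs field is a $2\times 2$ matrix of holomorphic $1$-forms with zero trace, and one counts $h^0$ of the relevant line bundles: the diagonal entries live in $H^0(C, \Omega_C^1) = H^0(C, \mathcal{O}_C)$ which is $1$-dimensional, and the off-diagonal entries live in $H^0(C, L^{\pm 2}(\mp w_\infty) \otimes \Omega_C^1)$. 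Using $L^2 = \mathcal{O}_C(D + w_\infty)$ and $\Omega_C^1 \simeq \mathcal{O}_C$ on the elliptic curve, one gets an affine space over $\mathbb{C}^{n}$ (the expected dimension, matching $\dim \ZZ_n = n$ noted in the text). I would then argue that modding out by the residual automorphisms of $E$ (the diagonal and unipotent parts of $\mathrm{Aut}(E)$ that preserve the residue configuration) turns this into a free affine action of $(\mathbb{C}, +)$ on $\mathbb{C}^{n+1}$, as anticipated in the paragraph introducing $\ZZ_n$, and the quotient is isomorphic to $\mathbb{C}^n$.

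The concrete realization I would use is: parametrize a connection by its residue endomorphisms' off-diagonal data together with a choice of global connection form, recognizing that the trace-free Higgs torsor over the rigid residue configuration is an affine space, and that the $S$-equivalence/isomorphism identifications amount to a single free additive group action (coming from the unipotent automorphisms of $E$ compatible with the fixed directions). The third key step is to check freeness of this action and that the quotient is separated and smooth, so that each component is genuinely $\simeq \mathbb{C}^n$ rather than merely a bijective image.

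The main obstacle I anticipate is the bookkeeping of the automorphism action and confirming it is \emph{free}: one must rule out that a nontrivial unipotent or diagonal automorphism fixes a connection, which could happen only if the connection had extra symmetry, and the generic eigenvalue hypothesis $\nu_1^{a_1} + \cdots + \nu_n^{a_n} \notin \mathbb{Z}$ should be exactly what excludes such fixed points (it forces irreducibility, hence no nontrivial automorphisms of $(E, \nabla)$ beyond scalars). I would therefore lean on Remark \ref{rem:genindec} and the irreducibility of connections under the generic hypothesis to guarantee the action on the $\mathbb{C}^{n+1}$ of connection data is free, yielding a clean $\mathbb{C}^n$ quotient. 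The rest—dimension count and the four-fold torsion splitting—is then essentially linear algebra and Riemann–Roch on the elliptic curve, which I would carry out but not belabor here.
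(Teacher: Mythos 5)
Your proposal is correct and follows essentially the same route as the paper: four components from the $2$-torsion twists of $L$, identification of each component with the quotient of ${\rm Higgs}_0(E)\simeq\mathbb{C}^{n+1}$ by the residual $(\mathbb{C},+)$ of unipotent automorphisms preserving the residue configuration, freeness of that action from the generic eigenvalue hypothesis, and the fact that a free affine $(\mathbb{C},+)$-action on $\mathbb{C}^{n+1}$ has quotient $\mathbb{C}^n$ (for which the paper cites P\"uttmann). The only cosmetic difference is that the paper verifies freeness by writing out the action explicitly and noting that a fixed point would force $c_0=0$, hence an integer eigenvalue relation, whereas you invoke irreducibility directly --- the same underlying reason.
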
 

\begin{proof}
	First note that there exist precisely four possibilities for the underlying vector bundle of a connection in $\ZZ_n$. Indeed, Lemma \ref{lem:redsigma} shows that any such vector bundle is $E= L\oplus L^{-1}(w_\infty)$ where $L$ is such that $L^2 = \mathcal{O}_C(D+w_\infty)$. Twisting by $2$-torsion line bundles yields four non-isomorphic possibilities for $L$. Hence four non-isomorphic possibilities for $E$. Therefore $\ZZ_n$ has four connected components.
	
	Fix one such $E$ and denote $\ZZ_n^E$ the corresponding component of $\ZZ_n$. Up to the action of ${\rm Aut}(E)$, we can fix a configuration of directions as in Figure \ref{fig:confsigma} so that we may only consider connections on $E$ that have this configuration. Note that since $L^2 = \mathcal{O}_C(D+w_\infty)$ the stabilizer of such configuration in ${\rm Aut}(E)$ is a copy of the additive group $(\mathbb{C},+)$ generated by
	\[
	\begin{pmatrix}
		1 & \beta \\ 0 & 1
	\end{pmatrix}
	\]
	where $\beta \in H^0(L^2(-w_\infty))\setminus \{0\}$ vanishes on $D$. On the other hand, if we fix a connection $\nabla_0$ on $E$, for any other connection $\nabla$, the difference $\nabla- \nabla_0 \in {\rm Hom}(E, E\otimes \Omega_C)$ is a holomorphic Higgs field. Since $\nabla$ and $\nabla_0$ must have the same trace, this Higgs field is traceless. Thus we have an isomorphism
	\[
	\begin{tikzcd}[row sep = 2pt]
		\ZZ_n^E \arrow[r] & {\rm Higgs}_0(E)/(\mathbb{C},+)\\
		\left[\nabla\right] \arrow[r,maps to] & \left[\nabla - \nabla_0\right]
	\end{tikzcd}
	\]
	where ${\rm Higgs}_0(E)$ is the space of traceless Higgs fields. Note that ${\rm Higgs}_0(E) \simeq \mathbb{C}^{n+1}$. 
	
	We now explicitly describe the action of $(\mathbb{C},+)$ on ${\rm Higgs}_0(E)$. Locally, we can write 
	\[
	\nabla_0 = d+\begin{pmatrix}
		a_0 & b_0 \\ c_0 & d_0
	\end{pmatrix} \text{ and } \varphi = \begin{pmatrix}
		a_1 & b_1 \\ 0 & -a_1
	\end{pmatrix}.
	\]
	Then the action of $t\in \mathbb{C}$ is given by
	\begin{align*}
		t\cdot\varphi &= \begin{pmatrix}	1 & -t\beta \\ 0 & 1 \end{pmatrix} \left[
		\begin{pmatrix}	0 & td\beta \\ 0 & 0 \end{pmatrix} + 
		\begin{pmatrix}a_0+a_1 & b_0+b_1 \\ c_0& d_0-a_1 \end{pmatrix}
		\begin{pmatrix}	1 & t\beta \\ 0 & 1 \end{pmatrix}\right] - 
		\begin{pmatrix}a_0 & b_0 \\ c_0 & d_0 \end{pmatrix} = \\
		& = \begin{pmatrix} 1 & -2t\beta \\ 0 & 1	\end{pmatrix} \begin{pmatrix}a_1 & b_1 \\ 0 &-a_1 \end{pmatrix} + 
		\begin{pmatrix} -tc_0\beta & t[(a_0-d_0)\beta +d\beta] - t^2c_0\beta^2 \\ 0 & tc_0\beta \end{pmatrix}.
	\end{align*}
	In particular, the action is given by affine transformations. Also note that $c_0 = 0$ would force an integer relation $\nu_1^{\epsilon_1}+\cdots+\nu_n^{\epsilon_n} \in \mathbb{Z}$ which is not possible; hence the action is free. This concludes the proof since the quotient of an affine free action of $(\mathbb{C},+)$ on $\mathbb{C}^{n+1}$ must be isomorphic to $\mathbb{C}^n$, see \cite[Corollary 7]{PutFreeAct}.
\end{proof}

In addition, notice that given $(E,\nabla)$ representing a point in $\ZZ_n$ (as in \eqref{sigma}) we may perform an elementary transformation centered at all parabolic directions to get $(E',\nabla')$. The subbundle $L\hookrightarrow E = L\oplus L^{-1}(w_\infty)$ becomes $U = L(-D)\otimes M \hookrightarrow E'$ where $M^2 = \mathcal{O}_C(D-w_\infty)$.  From $L^2 = \mathcal{O}_C(D+w_\infty)$ we have that $U^2 = \mathcal{O}_C$ and we have an extension
\begin{equation}\label{seq:extE0}
    0\longrightarrow U \longrightarrow E' \longrightarrow U \longrightarrow 0.
\end{equation}
Since $L$ does not pass through any parabolic direction on $E$, its transformed $U$ passes through every parabolic direction on $E'$. Thus $E'$ is indecomposable, otherwise we would have a decomposable quasi-parabolic bundle contradicting Remark \ref{rem:genindec}.  Therefore $U$ is one of the $4$ torsion line bundles and $E'$ is the corresponding unique indecomposable extension.


 We conclude that any connection representing a point in $\ZZ_n$ can be obtained from a connection over an indecomposable vector bundle $E'$ as in \eqref{seq:extE0}, by performing an elementary transformation centered in $n$ parabolic directions which lie in the unique maximal subbundle. Note that there exist $n$ directions to choose and $\dim {\rm Higgs}_0(E') = 1$ giving dimension $n+1$ for the space of connections on $E'$. Taking the quotient by the action of the automorphism group of the corresponding vector bundle, we get dimension $n$.

\subsection{The classical Painlev\'e case}
Here it is worth to compare the case $n=1$ with the Painlev\'e case, i.e. those logarithmic connections over the $4$-punctured Riemann sphere. We assume that $D=w_{\infty}$ and let $(\nu,-\nu-1)$, $\nu\in \mathbb C\setminus \mathbb Z$, denote the eigenvalues. The moduli space $\contotal$ parametrizes $(E,\nabla)$, where $\nabla$ is a logarithmic connection over $C$ with a single logarithmic pole over the point $w_{\infty}$, i.e. it is a Lam\'e connection. The whole moduli space writes 
\[
\contotal={\rm Con}^\nu\sqcup \ZZ_{1}
\]
where ${\rm Con}^\nu\simeq S$ is formed by $(E,\nabla)$ with $E=E_1$. It follows from Theorem \ref{thm:dessigma} that $\ZZ_{1}$ is a union of four irreducible curves isomorphic to $\mathbb C$:
\begin{equation}
	\ZZ_{1}^i = \{ (E,\nabla)\in \contotal \mid E = L \oplus L^{-1}(w_\infty) \text{ with } L = \mathcal{O}_C(w_i)\}
\end{equation}
for $i\in\{0,1,\lambda, \infty\}$, where $w_i$ are Weierstrass points. 

We let ${\rm Con}^{\theta}(\mathbb P^1,T)$, $T=0+1+\lambda+\infty$, denote the moduli space of logarithmic connections over the $4$-punctured Riemann sphere $(\mathbb P^1, T)$, with eigenvalue 
\[
\theta = \left ( \pm\frac{1}{4}, \pm\frac{1}{4}, \pm\frac{1}{4}, \theta^{\pm} \right )\;;\;\; \theta^+=\frac{\nu}{2}-\frac{1}{4}\;,\;\;\theta^-=-\theta^+-1.
\]
The determinant line bundle and the trace connection on it are fixed, i.e. we fix a rank one connection $\zeta$ on $\mathcal O_{\mathbb P^1}(1)$ with residue $-1$ at $\infty$ and for any $(E,\nabla)\in{\rm Con}^{\theta}(\mathbb P^1,T)$ we have
\[
(\det E, {\rm tr}\nabla) = (\mathcal O_{\mathbb P^1}(1),\zeta).
\]

The eigenvalue $\theta$ is chosen so that we can construct a map 
\[
F\colon {\rm Con}^{\theta}(\mathbb P^1,T)\longrightarrow \contotal
\] 
which turns out to be an isomorphism (see \cite{Lame}), and which we now describe. Given 
\[
(E,\nabla)\in{\rm Con}^{\theta}(\mathbb P^1,T)
\]
we first take its pullback $(\pi^*E,\pi^*\nabla)$ via the $2$-cover $\pi:C\rightarrow \mathbb P^1$, the underlying vector bundle has determinant $\det(\pi^*E) = \mathcal O_C(2w_{\infty})$. After $\pi^*$ the eigenvalues are multiplied by $2$ and by performing an elementary transformation ${\rm elem}_{0,1,\lambda}$ over $w_0, w_1$ and $w_\lambda$, centered on directions corresponding to eigenvalues $1/2$,  we get 
\[
(E',\nabla') = {\rm elm}_{0,1,\lambda}(\pi^*E,\pi^*\nabla)
\]
with $\det E' = \mathcal O_C(-w_{\infty})$, recall that $w_0+w_1+w_{\lambda} \sim 3w_{\infty}$. The transformed connection $\nabla'$ has apparent singularities over $w_0, w_1$ and $w_\lambda$, thus they disappear after twist by a suitable rank one connection $\zeta_0$ on $\mathcal O_C(w_{\infty})$. More precisely, we may fix $\zeta_0$ with residues 
\[
-\frac{1}{2}, -\frac{1}{2}, -\frac{1}{2} \text{ and }\;\; \frac{1}{2}
\]
over $w_0, w_1,w_\lambda$ and $w_\infty$ to get
\[
F(E,\nabla):=(E',\nabla') \otimes (\mathcal O_C(w_{\infty}), \zeta_0) \in \contotal. 
\] 

We now describe the preimage of $\ZZ_1$ via $F$. For any $(E,\nabla)\in{\rm Con}^{\theta}(\mathbb P^1,T)$ the underlying vector bundle is always $E=\mathcal O_{\mathbb P^1}\oplus \mathcal O_{\mathbb P^1}(1)$, see \cite{Lame}. For each $i\in \{0,1,\lambda \}$ we let $p_i^+(\nabla)$ denote the parabolic direction with respect to the eigenvalue $1/4$ and $p_\infty^+(\nabla)$ to be the $\theta^+$ direction. Then define $(E, p^+(\nabla))$ the corresponding quasiparabolic bundle. Since $\nu \not\in \mathbb{Z}$, we have that $(E, p^+(\nabla) )$ is indecomposable and, in particular, there exist at most one direction lying in $\mathcal O_{\mathbb P^1}(1)$. Indeed, given two parabolic directions outside $\mathcal O_{\mathbb P^1}(1)$, we can choose an embedding of $\mathcal O_{\mathbb{P}^1}\rightarrow E$ passing through them. 


Then define
\[
A^i = \{ (E,\nabla)\in{\rm Con}^{\theta}(\mathbb P^1,T)\mid p_i^+(\nabla) \in \mathcal O_{\mathbb P^1}(1)\}
\]
and let $A^\infty$ be the locus of $(E,\nabla)\in{\rm Con}^{\theta}(\mathbb P^1,T)$ such that there is an embedding of $\mathcal O_{\mathbb P^1}$ passing through the three directions $ p_0^+(\nabla), p_1^+(\nabla), p_{\lambda}^+(\nabla)$. 

\begin{claim} 
We claim that $F$ sends $A^i$ to $\ZZ_1^i$.
\end{claim}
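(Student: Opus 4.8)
The plan is to compute the underlying vector bundle of $F(E,\nabla)$ and to match it with the four possibilities describing $\ZZ_1^i$. Since membership of a connection in $\ZZ_1^i$ depends only on the isomorphism class of its underlying bundle, namely on $E=\mathcal O_C(w_i)\oplus\mathcal O_C(w_\infty-w_i)$, it suffices to follow how a well-chosen line subbundle of $E=\mathcal O_{\mathbb P^1}\oplus\mathcal O_{\mathbb P^1}(1)$ behaves under the three steps defining $F$: pullback by $\pi$, the elementary transformation ${\rm elm}_{0,1,\lambda}$, and the twist by $(\mathcal O_C(w_\infty),\zeta_0)$.

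First I would record the effect of a single elementary transformation at a point $t$, centered on a direction $p\subset E_t$, on a line subbundle $M\subset E$: if $p\subset M_t$ the strict transform is again $M$ (degree unchanged), while if $p\not\subset M_t$ the strict transform is $M(-t)$ (degree drops by one). This is the local content of the blow-up/blow-down picture from Section~\ref{sect:parbun}, read off on the relevant section of the ruled surface. This bookkeeping---keeping the twisting divisors correct and checking that each strict transform stays saturated---is the part I expect to require the most care.

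Next, for $i\in\{0,1,\lambda\}$ and $(E,\nabla)\in A^i$, I would track $\pi^*\mathcal O_{\mathbb P^1}(1)=\mathcal O_C(2w_\infty)$. The condition $p_i^+(\nabla)\in\mathcal O_{\mathbb P^1}(1)$ says the center at $w_i$ lies on this subbundle, so its degree is preserved there, whereas at the remaining two Weierstrass points the centers lie off it, so the degree drops once at each. Here I would use that two of the three directions can never lie on $\mathcal O_{\mathbb P^1}(1)$ at once: otherwise the same tracking would produce a subbundle of degree $\geq 2$ in the underlying bundle of $F(E,\nabla)\in\contotal$, contradicting Lemma~\ref{lem:indec} (for $n=1$ the underlying bundle has no subbundle of degree larger than one). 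Hence the strict transform ends in degree $0$, equal to $\mathcal O_C(2w_\infty-w_j-w_k)$ with $\{j,k\}=\{0,1,\lambda\}\setminus\{i\}$; twisting by $\mathcal O_C(w_\infty)$ and invoking $w_0+w_1+w_\lambda\sim 3w_\infty$ turns it into the degree-one subbundle $\mathcal O_C(3w_\infty-w_j-w_k)=\mathcal O_C(w_i)$. For $A^\infty$ the identical tracking applied to the common section $\pi^*\mathcal O_{\mathbb P^1}=\mathcal O_C$---through which all three centers pass---keeps the degree at $0$ and, after the twist, gives the subbundle $\mathcal O_C(w_\infty)$. In all cases the underlying bundle $E''$ of $F(E,\nabla)$ acquires a line subbundle isomorphic to $\mathcal O_C(w_i)$ of degree one; as $\operatorname{Ext}^1$ of the degree-zero quotient by this subbundle vanishes on the elliptic curve, $E''$ splits as $\mathcal O_C(w_i)\oplus\mathcal O_C(w_\infty-w_i)$, so $F(E,\nabla)\in\ZZ_1^i$ and $F(A^i)\subseteq\ZZ_1^i$.

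Finally I would deduce the reverse inclusions. Every operation composing $F$---pullback, elementary transformation, and twist---is invertible, so the degree tracking above is reversible and yields an equivalence: the bundle $E''$ is unstable if and only if some $p_i^+(\nabla)$ lies on $\mathcal O_{\mathbb P^1}(1)$, or all three directions lie on a common embedded $\mathcal O_{\mathbb P^1}$, and in that case the unique maximal destabilizing subbundle $\mathcal O_C(w_i)$ determines $i$. Since $\ZZ_1$ is exactly the locus of unstable underlying bundles (Lemma~\ref{lem:redsigma}) and $\ZZ_1=\bigsqcup_i\ZZ_1^i$, this gives $F^{-1}(\ZZ_1^i)=A^i$, hence $F(A^i)=\ZZ_1^i$. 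Alternatively, because $F$ is an isomorphism and each $\ZZ_1^i\simeq\mathbb C$ is an irreducible curve (Theorem~\ref{thm:dessigma}), the established inclusion of the nonempty, one-dimensional $A^i$ into $F^{-1}(\ZZ_1^i)$ already forces equality. The main obstacle throughout is the line-bundle bookkeeping through the elementary transformations---in particular identifying, via the inverse transformations through the double cover $\pi$, the destabilizing subbundle of $E''$ with the subbundle of $E$ recorded by the incidence condition---which is governed by the constraint that underlying bundles in $\contotal$ are never more unstable than $\mathcal O_C(w_i)\oplus\mathcal O_C(w_\infty-w_i)$.
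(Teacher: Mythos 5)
Your proposal is correct and follows essentially the same route as the paper: pull back via $\pi$, track line subbundles through ${\rm elm}_{0,1,\lambda}$ using the rule that the degree drops exactly at the centers the subbundle misses, and identify the resulting degree-one subbundle as $\mathcal O_C(w_i)$ after the twist using $w_0+w_1+w_\lambda\sim 3w_\infty$. The only cosmetic differences are that you track a single summand and recover the splitting of the transformed bundle from the vanishing of $\operatorname{Ext}^1$ of a degree-one line bundle (the paper simply tracks both summands directly), and that you also argue the reverse inclusion, which the paper leaves implicit.
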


Let us assume $i=0$, other cases are similar. Pulling back $(E,\nabla)\in A^0$ to the elliptic curve we get 
\[
(\pi^*E,\pi^*\nabla) = (\mathcal O_C\oplus \mathcal O_C(2w_{\infty}), \pi^*\nabla)
\]
with one parabolic direction $p_0^+(\pi^*\nabla)$ lying in $\mathcal O_C(2w_{\infty})$. We may assume that $\mathcal O_C$ pass through $p_1^+(\nabla)$ and $p_{\lambda}^+(\nabla)$, thus after elementary transformation ${\rm elem}_{0,1,\lambda}$ we get 
\[
(E',\nabla') = (\mathcal O_C(-w_0)\oplus\mathcal O_C(2w_{\infty}-w_1-w_\lambda), \nabla')
\]
and twisting by $(\mathcal O_C(w_{\infty}), \zeta_0)$ we obtain 
\[
F(E,\nabla) = (\widetilde{E}, \nabla'\otimes \zeta_0) 
\]
where $\widetilde{E}\simeq L\oplus L^{-1}(w_\infty)$, with $L=\mathcal O_C(w_0)$. Consequently, $F(E,\nabla)\in \ZZ_1^0$. This proves the claim.

\section{Fuchsian systems with \texorpdfstring{$n+3$}{n+3} poles}\label{sect:fuchssyst}
Given $(E_1,\nabla)\in {\rm Con}^{\nu}$, we can associate a $\mathfrak {sl}_2$-connection on the trivial bundle $\mathcal O_C\oplus\mathcal O_C$ by performing an elementary transformation for a particular choice of directions over the $2$-torsion points $w_0$, $w_1$ and $w_\lambda$. We begin the section establishing this correspondence. The process creates new singularities which are apparent, i.e. they become regular points after one elementary transformation. 

We have that $t\in C$ is an apparent singular point for $\nabla$ if the residual part ${\rm Res}_t \nabla$ has $\{\frac{1}{2}, -\frac{1}{2}\}$ as eigenvalues and the $\frac{1}{2}$-eigenspace of ${\rm Res}_t \nabla$ is also invariant by the constant part of the connection matrix. 

Recall that $D'= w_0+w_1+w_\lambda + D$ is also reduced, see Remark \ref{rem:notation}. Let $(\nu_1, \dots, \nu_n) \in (\mathbb{C}^\ast)^n$ and fix an eigenvalue $\sigma$ by setting $\left(\frac{1}{2}, -\frac{1}{2}\right)$ over $w_0, w_1$ and $w_\lambda$, and $\left(\frac{\nu_j}{2}, -\frac{\nu_j}{2}\right)$ over $t_j$, $j= 1, \dots, n$, i.e. 
\begin{equation}\label{theta}
\sigma = \left(\frac{1}{2}, -\frac{1}{2}, \frac{1}{2}, -\frac{1}{2}, \frac{1}{2}, -\frac{1}{2} , \frac{\nu_1}{2}, -\frac{\nu_1}{2}, \dots, \frac{\nu_n}{2}, -\frac{\nu_n}{2}\right).
\end{equation}

We denote by ${\rm Syst}^{\sigma}(C,D')$ the moduli space of Fuchsian systems (i.e. logarithmic $\mathfrak {sl}_2$-connections on the trivial bundle $\mathcal O_C\oplus\mathcal O_C$) having pole divisor $D'$, eigenvalue $\sigma$ and such that: 
\begin{itemize}
\item the three singular points $w_0, w_1$ and $w_{\lambda}$ are apparent;
\item over $w_0$, $w_1$ and $w_\lambda$ the corresponding $\frac{1}{2}$-eigenspaces are $(1:0)$, $(1:1)$ and $(0:1)$ respectively.
\end{itemize}

\begin{figure}[ht]
	\centering
	\definecolor{qqqqff}{rgb}{0.,0.,1.}
	\definecolor{qqwuqq}{rgb}{0.,0.39215686274509803,0.}
	\definecolor{zzttqq}{rgb}{0.6,0.2,0.}
	\definecolor{ududff}{rgb}{0.30196078431372547,0.30196078431372547,1.}
	\definecolor{xdxdff}{rgb}{0.49019607843137253,0.49019607843137253,1.}
	\begin{tikzpicture}[line cap=round,line join=round,>=triangle 45,x=1.0cm,y=1.0cm]
		\begin{axis}[
			x=1.0cm,y=1.0cm,
			axis lines= left,
			axis line style={draw=none}, 
			ymajorgrids=false,
			xmajorgrids=false,
			xmin=2.4,
			xmax=10.4,
			ymin=-0.2,
			ymax=3.0,
			xtick= \empty, 
			ytick= \empty, 
			]
			\draw[line width=1.pt,color=zzttqq,smooth,samples=100,domain=2.5:8] plot(\x,{1.356976356976357E-4*(\x)^(6.0)-0.004306526806526807*(\x)^(5.0)+0.04784132534132534*(\x)^(4.0)-0.21022560772560772*(\x)^(3.0)+0.24674825174825174*(\x)^(2.0)+0.5092574092574093*(\x)}) node[anchor=west] {\color{black}$\mathcal{O}_C(w_0- w_\infty)$};
			\draw[line width=1.pt,color=qqwuqq,smooth,samples=100,domain=2.5:8] plot(\x,{2.0E-50*(\x)^(5.0)-6.4E-49*(\x)^(4.0)+7.3E-48*(\x)^(3.0)-3.5E-47*(\x)^(2.0)+6.0E-47*(\x)+2.0}) node[anchor= west] {\color{black}$\mathcal{O}_C(w_1- w_\infty)$};
			\draw[line width=1.pt,color=qqqqff,smooth,samples=100,domain=2.5:8] plot(\x,{1.48994523994524E-4*(\x)^(6.0)-0.0046662365412365415*(\x)^(5.0)+0.05146212583712584*(\x)^(4.0)-0.23185501998001998*(\x)^(3.0)+0.3543778906278906*(\x)^(2.0)-0.3574897324897325*(\x)+3.0}) node[anchor= west] {\color{black}$\mathcal{O}_C(w_\lambda - w_\infty)$};
			\draw [line width=1.5pt] (3.,3.)-- (3.,0.2) node[anchor=north] {$w_0$};
			\draw [line width=1.5pt] (5.,3.)-- (5.,0.2) node[anchor=north] {$w_1$};
			\draw [line width=1.5pt] (7.,3.)-- (7.,0.2) node[anchor=north] {$w_\lambda$};
			\draw[line width=1.pt,dash pattern=on 2pt off 3pt,smooth,samples=100,domain=2.5:8] plot(\x,{0.015714285714285715*(\x)^(4.0)-0.3238095238095238*(\x)^(3.0)+2.1871428571428573*(\x)^(2.0)-5.404761904761905*(\x)+5.0}) node[anchor= west] {\color{black}$\mathcal{O}_C$};
		\end{axis}
	\end{tikzpicture}
	\caption{Sections of $\mathbb{P}(E_1)$} \label{fig:sections}
\end{figure}
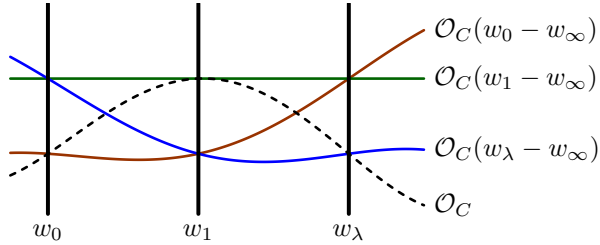

\begin{prop}\label{prop:consys}
There is an isomorphism of moduli spaces
\[
{\rm Con}^{\nu}\stackrel{\sim}{\longrightarrow} {\rm Syst}^{\sigma}(C,D')
\]
where $\displaystyle \nu_j = \nu^+_j-\nu^-_j$ for $j = 1, \dots, n$. 
\end{prop}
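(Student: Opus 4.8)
The plan is to realize the isomorphism explicitly as an elementary transformation over the three two-torsion points $w_0,w_1,w_\lambda$ followed by a twist by a rank one connection, and then to check that the construction is reversible. Given $(E_1,\nabla)\in{\rm Con}^\nu$, I would first fix a direction $\ell_i\subset (E_1)_{w_i}$ over each $w_i$ and form the lower elementary transformation
\[
0\longrightarrow E'\stackrel{\alpha}{\longrightarrow}E_1\longrightarrow\bigoplus_{i\in\{0,1,\lambda\}}\!\big((E_1)_{w_i}/\ell_i\big)\longrightarrow 0,
\]
endowed with the pulled-back connection $\alpha^*\nabla$. Since $\nabla$ is regular at each $w_i$, this creates a new simple pole whose residue has eigenvalues $\{0,1\}$, with the $1$-eigenspace equal to $\ker\alpha_{w_i}$. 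Fixing $N$ with $N^2=\mathcal O_C(2w_\infty)$ together with the rank one logarithmic connection $\xi_0$ on $N$ determined by $2\xi_0=-\mathrm{tr}(\alpha^*\nabla)$, I set $\Theta(E_1,\nabla)=(E'\otimes N,\ \alpha^*\nabla\otimes\xi_0)$, after normalizing directions by a constant gauge (see below). The inverse $\Theta^{-1}$ runs the same construction backwards: twist by $N^{-1}$ and perform the elementary transformation at the prescribed $\tfrac12$-eigenspaces to erase the apparent poles.

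The residue bookkeeping is routine and fixes the eigenvalue datum. At each $t_j$ the twist sends $\{\nu_j^+,\nu_j^-\}$ to $\{\tfrac{\nu_j}2,-\tfrac{\nu_j}2\}$ with $\nu_j=\nu_j^+-\nu_j^-$, while at each $w_i$ it sends $\{0,1\}$ to $\{-\tfrac12,\tfrac12\}$; this is exactly $\sigma$. The connection $\xi_0$ with residues $-\tfrac12$ at the $w_i$ and $-\tfrac{\nu_j^++\nu_j^-}2$ at the $t_j$ exists on the degree one bundle $N$ because the residue theorem requirement $\deg N+\sum\mathrm{Res}\,\xi_0=0$ reduces, via the Fuchs relation $1+\sum_j(\nu_j^++\nu_j^-)=0$, to an identity. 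Finally, the poles created at $w_0,w_1,w_\lambda$ are apparent in the sense defined before the proposition: a local computation in a frame adapted to $\ell_i$ shows that the residue is conjugate to $\mathrm{diag}(-\tfrac12,\tfrac12)$ with $\tfrac12$-eigenspace $\ker\alpha_{w_i}$ invariant under the constant part, which is precisely the condition that a single inverse elementary transformation restores regularity.

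The heart of the argument is the identification of the underlying vector bundles. On the one hand, $w_0+w_1+w_\lambda\sim 3w_\infty$ gives $\det E'=\mathcal O_C(-2w_\infty)$ and hence $\det(E'\otimes N)=\mathcal O_C$; I must then show that $E'\otimes N$ is actually the trivial bundle $\mathcal O_C\oplus\mathcal O_C$ and that the three directions $\ell_i$ can be matched so that the resulting $\tfrac12$-eigenspaces sit at $(1:0),(1:1),(0:1)$. This is an instance of the classical principle that enough apparent singularities furnish the gauge freedom to reach the trivial bundle: the three apparent poles, together with the freedom in $\ell_0,\ell_1,\ell_\lambda$, suffice to trivialize the degree zero, trivial-determinant bundle $E'\otimes N$, after which its automorphism group is the constant $\mathrm{GL}_2$ and a unique element of $\mathrm{PGL}_2$ normalizes the three pairwise distinct eigenspaces to $(1:0),(1:1),(0:1)$. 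I expect this triviality statement to be the main obstacle, since it is the only step that is genuinely geometric rather than formal bookkeeping, and it is exactly where the special position of the two-torsion points and the indecomposable bundle $E_1$ must be used (e.g. through the distinguished degree zero sub-line-bundles $\mathcal O_C$ and $\mathcal O_C(w_i-w_\infty)$ of $E_1$ pictured in Figure \ref{fig:sections}, which come from $E_1\otimes\tau\simeq E_1$ for two-torsion $\tau$).

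For the reverse construction the bundle identification is cleaner: starting from $(\mathcal O_C^2,\Sigma)\in{\rm Syst}^\sigma(C,D')$, I twist by $N^{-1}$ and perform the elementary transformation at the $\tfrac12$-eigenspaces $(1:0),(1:1),(0:1)$ to kill the apparent poles, landing on a bundle $\hat E$ with $\det\hat E=\mathcal O_C(w_\infty)$. Because these three prescribed directions are pairwise distinct, a short computation shows that no sub-line-bundle of $\mathcal O_C(-w_\infty)\oplus\mathcal O_C(-w_\infty)$ can pass through all of them and thereby be promoted to degree $\geq 1$ in $\hat E$; hence $\hat E$ admits no sub-line-bundle of degree $\geq 1$, and by Lemma \ref{lem:indec} it must be the indecomposable bundle $E_1$. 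Combined with the genericity $\nu_1^{a_1}+\cdots+\nu_n^{a_n}\notin\mathbb Z$, which guarantees irreducibility and so places the output inside $\contotal$, this shows $\Theta^{-1}$ is well defined with image in ${\rm Con}^\nu$. Since elementary transformations and twists by rank one connections are invertible and functorial in families, $\Theta$ and $\Theta^{-1}$ are mutually inverse morphisms of the moduli spaces, which yields the desired isomorphism ${\rm Con}^\nu\stackrel{\sim}{\longrightarrow}{\rm Syst}^\sigma(C,D')$.
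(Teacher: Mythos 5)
Your overall strategy is the same as the paper's: elementary transformation of $(E_1,\nabla)$ over $w_0,w_1,w_\lambda$, followed by a twist by a rank one connection on $\mathcal O_C(w_\infty)$ whose existence comes from the Fuchs relation; the residue bookkeeping and the inverse construction are fine. But the step you yourself flag as ``the main obstacle'' --- that $E'\otimes N$ is the \emph{trivial} bundle with the three new $\tfrac12$-eigenspaces in general position --- is precisely the substantive content of the proposition, and it is not supplied by the appeal to ``the classical principle that enough apparent singularities furnish the gauge freedom to reach the trivial bundle.'' The claim is in fact false for a general choice of the centers $\ell_i$: if you take each $\ell_i$ to be the fiber at $w_i$ of the subbundle $\mathcal O_C\subset E_1$, then $\mathcal O_C$ survives as a saturated degree-zero subbundle of the degree $-2$ bundle $E'$ (with quotient $\mathcal O_C(-2w_\infty)$), so $E'\otimes\mathcal O_C(w_\infty)$ contains $\mathcal O_C(w_\infty)$ and is certainly not $\mathcal O_C\oplus\mathcal O_C$. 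So ``the freedom in $\ell_0,\ell_1,\ell_\lambda$'' is not a gauge freedom that automatically trivializes the bundle; a specific choice must be exhibited and shown to work, and without it your map is neither well defined (it depends on the $\ell_i$) nor known to land in ${\rm Syst}^{\sigma}(C,D')$.

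The paper's proof consists exactly of making that choice and reading off the consequence from the intersection pattern of the degree-zero subbundles of $E_1$ (Figure \ref{fig:sections}). For distinct degree-zero subbundles $L,L'\subset E_1$ the composite $L\to E_1\to E_1/L'\simeq (L')^{-1}(w_\infty)$ vanishes at the single point linearly equivalent to $3w_\infty-\ell-\ell'$ (writing $L=\mathcal O_C(\ell-w_\infty)$), so $\mathcal O_C(w_i-w_\infty)$ and $\mathcal O_C(w_j-w_\infty)$ meet in one point of the fiber over $w_k$, where $\{i,j,k\}=\{0,1,\lambda\}$; one takes $\ell_k$ to be that point. Then each $\mathcal O_C(w_i-w_\infty)$ passes through exactly two of the three centers and misses the one over $w_i$, hence becomes $\mathcal O_C(w_i-w_\infty)(-w_i)\simeq\mathcal O_C(-w_\infty)$ inside $E'$, and the three resulting copies of $\mathcal O_C(-w_\infty)$ are pairwise disjoint sections of $\mathbb P(E')$ because their mutual intersection points were exactly the centers. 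Two such copies already force $E'\simeq\mathcal O_C(-w_\infty)\oplus\mathcal O_C(-w_\infty)$, and the third pins down the normalization of the $\tfrac12$-eigenspaces to $(1:0),(1:1),(0:1)$ after tensoring by $\mathcal O_C(w_\infty)$. You correctly point at these subbundles in your parenthetical remark, but this computation is the proof, and it is missing from your write-up.
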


\begin{proof}
Let $(E_1,\nabla) \in {\rm Con}^\nu$. The subbundles $\mathcal{O}_C(w_0-w_\infty), \mathcal{O}_C(w_1-w_\infty), \mathcal{O}_C(w_\lambda-w_\infty)\subset E_1$ intersect as in Figure \ref{fig:sections} and we apply elementary transformation on these directions so that they become three disjoint copies of $\mathcal{O}_C(-w_\infty)$. Hence the elementary transformed of $(E_1,\nabla)$ is $(E,\nabla')$ where 
$E = \mathcal{O}_C(-w_\infty)\oplus \mathcal{O}_C(-w_\infty)$ and the eigenvalues of $\nabla'$ are the same as $\nabla$ at the $t_k$, $k=1,\dots, n$, and the eigenvalues at $w_0, w_1$ and $w_\lambda$ are equal to $(1,0)$.
Consider the connection $(\mathcal{O}_C(w_\infty), \xi)$ such that
\[
{\rm Res}_{w_0} \xi = Res_{w_1} \xi = Res_{w_\lambda} \xi= -\frac{1}{2} \quad \text{and} \quad {\rm Res}_{t_k} \xi = -\frac{\nu_k^++\nu_k^-}{2}.
\]
It exists since $\sum_{k=1}^n \nu_k^+ + \nu_k^- = -1$ by Fuchs relation. Therefore 
\[
(E_1,\nabla) \longmapsto (\mathcal{O}_C\oplus\mathcal{O}_C,\nabla'\otimes \xi) \in {\rm Syst}^{\sigma}(C,D')
\]
is our desired isomorphism.
\end{proof}

Consider the space $\BB$ of parabolic bundles $(E,{\bf p})$ over $(C,D')$ such that $E$ is the trivial bundle, the parabolic directions over $w_0,w_1$ and $w_\lambda$ are $(1:0)$, $(1:1)$ and $(0:1)$, respectively, and we let the parabolic directions over $D = t_1+\cdots +t_n$ vary. We have a natural identification $\BB=(\mathbb P^1)^n$ where each copy of $\mathbb{P}^1$ parametrizes parabolic directions over $t_j$. 

The elementary transformation used in the proof yields an alternative proof for the isomorphism 
${\rm Bun} \simeq (\mathbb{P}^1)^n$ in Proposition \ref{prop:1stchamb}. 
Indeed, $(E_1, {\bf p})$ corresponds to $(\mathcal{O}_C\oplus\mathcal{O}_C, {\bf p'})$ and, 
up to automorphism, we can fix the directions over $w_0, w_1$ and $w_\lambda$ so that it defines an element of $\BB$; in particular, this gives us the identification
\begin{eqnarray}\label{BB}
{\rm Bun} \simeq \BB. 
\end{eqnarray}

\subsection{The affine bundle of Fuchsian systems}

The moduli space ${\rm Syst}^{\sigma}(C,D')$ is an affine bundle of rank $n$ over $\BB$. We will describe its trivializations over the Zariski open sets 
\[
U_0 = \prod_{j=1}^{n} \left\{(z_j:w_j)\in \mathbb{P}^1\mid w_j=1\right\} \quad{\text and } \quad U_\infty = \prod_{j=1}^{n} \left\{(z_j:w_j)\in \mathbb{P}^1\mid z_j=1\right\},
\]
and then give the affine transition map.

In order to make explicit computations, we will consider that $C$ is given by the affine equation $y^2=x(x-1)(x-\lambda)$, with $\lambda\in\mathbb C\setminus\{0,1\}$, and $w_\infty\in C$ will be the point at infinity. In particular, the torsion points are $w_0 = (0,0)$, $w_1 = (1,0)$ and $w_\lambda = (\lambda, 0)$. To begin with, let us fix a basis of meromorphic one-forms with at most simple poles on $D'$. Consider the holomorphic one-form $ \omega = \frac{dx}{2y}=\frac{dy}{3x^2-2(1+\lambda)x+\lambda}$ and define 
\[
\displaystyle \phi_0 = \frac{(1-\lambda)x}{y}\omega, \;\displaystyle\phi_1 = \frac{-\lambda(x-1)}{y}\omega \text{ and } \theta_j = \frac{x_j(x_jx-\lambda)}{x_jy-y_jx}\omega, 
\]
where $t_j=(x_j,y_j)$. These $n+3$ one-forms give us the desired basis, 
and their residues (and constant term at $w_i$) are given as follows in terms of local coordinate $y$:
\begin{itemize}
\item at $w_0$, we have $x=O(y^2)$, and:
\begin{align*}
& \omega=\frac{dy}{\lambda}+O(y), && \phi_0=O(y), && \phi_1=\frac{dy}{y}+O(y), && \theta_j=-\frac{dy}{y}-\frac{y_j}{\lambda x_j}dy+O(y); 
\end{align*}
\item at $w_1$, we have $x=1+O(y^2)$, and:
\begin{align*}
&\omega=\frac{dy}{1-\lambda}+O(y), && \phi_0=\frac{dy}{y}+O(y),&& \phi_1=O(y), && \theta_j=\frac{x_j(x_j-\lambda)}{y_j}dy+O(y);
\end{align*}
\item at $w_\lambda$, we have $x=\lambda+O(y^2)$, and:
\begin{align*}
& \omega=\frac{dy}{\lambda(\lambda-1)}+O(y), && \phi_0=-\frac{dy}{y}+O(y), && \phi_1=-\frac{dy}{y}+O(y), && 
\theta_j=-\frac{x_j(x_j-1)}{y_j}dy+O(y);
\end{align*}
\item at $t_j$, the $1$-forms are holomorphic except for $\theta_j$ which has a simple pole with ${\rm Res}_{t_j}\theta_j=1$.
\end{itemize}
Therefore, any Fuchsian system $\nabla \in {\rm Syst}^{\sigma}(C,D')$ can be written as 
\[
\nabla = d+ \begin{pmatrix}	a_0 & b_0\\ c_0 & -a_0\end{pmatrix} \phi_0 + 
\begin{pmatrix}	a_1 & b_1\\	c_1 & -a_1\end{pmatrix} \phi_1+ 
\begin{pmatrix}	a_2 & b_2\\	c_2 & -a_2\end{pmatrix}\omega +
 \sum_{j=1}^{n}\begin{pmatrix}	\alpha_j & \beta_j\\ \gamma_j & -\alpha_j\end{pmatrix}\theta_j,
\]
and, as we impose apparent singularities at $w_0$, $w_1$ and $w_\lambda$, we get 
\begin{align*}
a_0 &= -\sum_{j=1}^n\alpha_j, & b_0 &= \frac{1}{2}+ \sum_{j=1}^n\alpha_j, &	 c_0 &= \frac{1}{2}-\sum_{j=1}^n\alpha_j, \\
a_1 &= \frac{1}{2} + \sum_{j=1}^n\alpha_j, & b_1 &=-\frac{1}{2}- \sum_{j=1}^n\alpha_j, & c_1&= \sum_{j=1}^n\gamma_j, \\
a_2 &= \sum_{j=1}^{n}\frac{y_j}{2}\left[ \frac{2\alpha_j+\beta_j-\gamma_j}{x_j-1} + \frac{\gamma_j}{x_j} -\frac{\beta_j}{x_j-\lambda}\right], & b_2 &= \sum_{j=1}^{n}\frac{y_j}{(x_j-\lambda)}\beta_j, & c_2 &= \sum_{j=1}^n \frac{y_j}{x_j}\gamma_j.
\end{align*}
In particular, $\nabla$ is completely determined by the residues 
\[
{\rm Res}_{t_j}(\nabla) = \begin{pmatrix}	\alpha_j & \beta_j\\ \gamma_j & -\alpha_j\end{pmatrix}.
\] 

\begin{prop} \label{prop:parfuchsSn}If $\nu_j \neq 0$ for $j=1, \dots, n$ then the map ${\rm Par}\colon {\rm Syst}^{\sigma}(C,D') \rightarrow S^n$ that associates to $\nabla$ the eigenspaces of ${\rm Res}_{t_j}(\nabla)$, $j=1, \dots, n$, is an isomorphism.
\end{prop}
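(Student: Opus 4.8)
The plan is to factor $\Par$ through the residue map and handle the two resulting maps separately. Set $c_j=\nu_j/2$; by the eigenvalue prescription each residue ${\rm Res}_{t_j}(\nabla)=\left(\begin{smallmatrix}\alpha_j&\beta_j\\\gamma_j&-\alpha_j\end{smallmatrix}\right)$ is traceless with eigenvalues $\pm c_j$, hence lies on the affine quadric
\[
Q_j=\left\{(\alpha_j,\beta_j,\gamma_j)\in\mathbb{C}^3\mid \alpha_j^2+\beta_j\gamma_j=c_j^2\right\},
\]
which is smooth since its defining gradient $(2\alpha_j,\gamma_j,\beta_j)$ vanishes only at the origin, a point excluded because $c_j=\nu_j/2\neq0$. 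The first step is to read the displayed relations expressing $a_0,\dots,c_2$ in terms of the $\alpha_j,\beta_j,\gamma_j$ as an equivalence: they are exactly the apparency conditions together with the prescribed $\tfrac12$-eigenspaces $(1:0),(1:1),(0:1)$ at $w_0,w_1,w_\lambda$. Consequently a system in ${\rm Syst}^{\sigma}(C,D')$ is both determined by and freely reconstructible from its residue data, and the residue map is an isomorphism ${\rm Syst}^{\sigma}(C,D')\xrightarrow{\ \sim\ }\prod_{j=1}^n Q_j$, the only surviving constraint being the equation cutting out each $Q_j$.

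It then remains to show that, for each $j$, the map $Q_j\to S$ sending $A$ to $\big(\ker(A-c_jI),\ker(A+c_jI)\big)$ is an isomorphism, for $\Par$ is the composite of the residue isomorphism with the product of these maps. Since $\nu_j\neq0$ the eigenvalues $\pm c_j$ are distinct, so $A$ has two distinct eigenlines and the image lands in $S=(\mathbb{P}^1\times\mathbb{P}^1)\setminus\Delta$. I would then exhibit a regular inverse: given $\big((z_+:w_+),(z_-:w_-)\big)\in S$, so that $z_+w_--z_-w_+\neq0$, the unique traceless matrix with eigenvalue $c_j$ on the first line and $-c_j$ on the second is
\[
A=\frac{c_j}{z_+w_--z_-w_+}\begin{pmatrix} z_+w_-+z_-w_+ & -2z_+z_- \\ 2w_+w_- & -(z_+w_-+z_-w_+)\end{pmatrix},
\]
namely $c_j(2P_+-I)$ with $P_+$ the projector onto the first eigenline along the second. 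This expression is regular on all of $S$ and is manifestly inverse to the eigenspace map, so each $Q_j\to S$ is an isomorphism and hence so is $\Par$.

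I expect the genuine work to be concentrated in the first step: one must check that the nine displayed formulas encode precisely the defining conditions of ${\rm Syst}^{\sigma}(C,D')$, i.e. that imposing apparency and the fixed $\tfrac12$-eigenspaces at the three torsion points forces those expressions and imposes no additional relation among the $\alpha_j,\beta_j,\gamma_j$; this is essentially the computation carried out just before the statement, after which surjectivity of the residue map onto $\prod_j Q_j$ is immediate. The second step is pure linear algebra, and the only point requiring care there is that a bijective morphism of smooth surfaces need not be an isomorphism, which is exactly why the argument produces the explicit regular inverse rather than merely matching points.
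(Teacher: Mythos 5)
Your proposal is correct and follows essentially the same route as the paper: the system is determined by (and freely reconstructible from) its residues at the $t_j$ via the displayed formulas, and the eigenspace map is inverted by the explicit matrix $c_j(2P_+-I)$, which is exactly the paper's formula $\frac{1}{zv-wu}\left(\begin{smallmatrix}z&u\\ w&v\end{smallmatrix}\right)\mathrm{diag}(\tfrac{\nu_j}{2},-\tfrac{\nu_j}{2})\left(\begin{smallmatrix}v&-u\\ -w&z\end{smallmatrix}\right)$ written out. The intermediate quadrics $Q_j$ are a harmless repackaging, and your explicit regular inverse is precisely how the paper concludes.
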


\begin{proof}
We know that $\nabla$ is determined by ${\rm Res}_{t_j}(\nabla)$, $j=1, \dots, n$. On the other hand, 
\begin{align*}
\begin{pmatrix}
		\alpha_j & \beta_j\\
		\gamma_j & -\alpha_j
	\end{pmatrix} = \frac{1}{zv-wu}\begin{pmatrix}
		z &u\\
		w & v
	\end{pmatrix}\cdot \begin{pmatrix} \displaystyle
		\frac{\nu_j}{2} & 0 \\ 
		0 &\displaystyle -\frac{\nu_j}{2}
	\end{pmatrix} \cdot \begin{pmatrix}
		v & -u \\
		-w & z
	\end{pmatrix} 
\end{align*} 
where $(z:w)$ and $(u:v)$ are the eigenspaces associated to $\frac{\nu_j}{2}$ and $-\frac{\nu_j}{2}$, respectively. 
Note that $S = \mathbb{P}^1 \times \mathbb{P}^1 \setminus \{ zv-wu =0\}$, hence the isomorphism is clear.
\end{proof}

\begin{remark}\label{rem:consys}
Note that we can give an alternative proof of Theorem \ref{thm:parconSn} using that Proposition \ref{prop:consys} and Proposition \ref{prop:parfuchsSn} combined give an isomorphism by the composition 
 \[
 {\rm Con}^{\nu}\longrightarrow {\rm Syst}^{\sigma}(C,D') \longrightarrow S^n.
 \]
\end{remark}





\subsubsection{Trivialization over $U_0$}\label{section: U_0} 
Given ${\bf z} = (z_1, \dots, z_n)\in U_0$ we define $\nabla_0$ as the connection which has $(z_j:1)$ as eigenspaces corresponding to $\frac{\nu_j}{2}$ and has $(1:0)$ as eigenspaces corresponding to $-\frac{\nu_j}{2}$. It can be written as 
\[
\nabla_0 = d+ \begin{pmatrix}
a_0 & b_0\\
c_0 & -a_0
\end{pmatrix} \phi_0 + \begin{pmatrix}
a_1 & b_1\\
0 & -a_1
\end{pmatrix} \phi_1+ \begin{pmatrix}
a_2 & b_2\\
0 & -a_2
\end{pmatrix}\omega + \sum_{j=1}^{n}\frac{\nu_j}{2}\begin{pmatrix}
-1 & 2z_j\\
0 & 1
\end{pmatrix}\theta_j.
\]
with
\[
\begin{array}{lll}
\displaystyle a_0 = \sum_{j=1}^n\frac{\nu_j}{2} &
\displaystyle b_0 = \frac{1}{2}- \sum_{j=1}^n\frac{\nu_j}{2} &
\displaystyle c_0 = \frac{1}{2}+\sum_{j=1}^n\frac{\nu_j}{2}\\
\displaystyle a_1 = \frac{1}{2} -\sum_{j=1}^n\frac{\nu_j}{2}&
\displaystyle b_1 =-\frac{1}{2}+\sum_{j=1}^n\frac{\nu_j}{2} \\
\displaystyle a_2 = \sum_{j=1}^{n}\frac{\nu_jy_j}{2} \left[ \frac{z_j-1}{x_j-1} -\frac{z_j}{x_j-\lambda}\right]&
\displaystyle b_2 = \sum_{j=1}^{n} \nu_j y_j \frac{z_j}{(x_j-\lambda)}
\end{array}
\]

Now if $\nabla$ is any connection which has $ {\bf z}$ as positive eigenspaces then the difference $\Theta=\nabla - \nabla_0$ is a Higgs field which is nilpotent with respect to ${\bf z}$.
This means that $\Theta$ is a strongly parabolic Higgs field over $(\mathcal O_C\oplus\mathcal O_C, {\bf z})$. 
We shall fix a basis $\{\Theta_1^0,\cdots, \Theta_n^0\}$ for the space of strongly parabolic Higgs fields such that 
\[
{\rm Res}_{t_j}\Theta_j^0 = \begin{pmatrix}
	z_j & -z_j^2\\
	1 & -z_j
\end{pmatrix}
\]
and ${\rm Res}_{t_i}\Theta_j^0 = 0$ for $i\neq j$. So, we define 
\begin{align*}
\Theta_j^0 & = \begin{pmatrix}
-z_j & z_j\\
-z_j & z_j
\end{pmatrix} \phi_0 + \begin{pmatrix}
z_j & -z_j\\
1 & -z_j
\end{pmatrix} \phi_1+ A_j\cdot\omega + \begin{pmatrix}
z_j & -z_j^2\\
1 & -z_j
\end{pmatrix}\theta_j
\end{align*}
where
\[
 A_j = \begin{pmatrix}
\frac{y_j}{2}\left[ \frac{-(z_j-1)^2}{x_j-1}+\frac{1}{x_j}+ \frac{z_j^2}{x_j-\lambda}\right] & -\frac{y_jz_j^2}{x_j-\lambda}\\
\frac{y_j}{x_j} & -\frac{y_j}{2}\left[ \frac{-(z_j-1)^2}{x_j-1}+\frac{1}{x_j}+ \frac{z_j^2}{x_j-\lambda}\right]
\end{pmatrix}.
\]
This matrix $A_j$ has been chosen to assure apparent singularities over $w_0,w_1$ and $w_{\lambda}$. 

Any $\mathfrak{sl}_2$-connection $(E,\nabla)\in{\rm Syst}^{\sigma}(C,D')$ having $(z_j:1)$ as parabolic direction (over $t_j$) corresponding to $\frac{\nu_j}{2}$ can be written as 
\[
\nabla = \nabla_0 + r_j\Theta_j^0
\]
for suitable $r_j\in\mathbb C$. It has $(z_jr_j-\nu_j:r_j)$ as the complementary direction corresponding to $-\frac{\nu_j}{2}$. 

\subsubsection{Trivialization over $U_\infty$}
Similarly, we define $\nabla_\infty$ having $(1:w_j)$ as eigenspaces corresponding to $\frac{\nu_j}{2}$ and $(0:1)$ as eigenspaces corresponding to $-\frac{\nu_j}{2}$:
\[
\nabla_\infty = d+ \begin{pmatrix}
a_0 & b_0\\
c_0 & -a_0
\end{pmatrix} \phi_0 + \begin{pmatrix}
a_1 & b_1\\
c_1 & -a_1
\end{pmatrix} \phi_1+ \begin{pmatrix}
a_2 & 0\\
c_2 & -a_2
\end{pmatrix}\omega + \sum_{j=1}^{n}\frac{\nu_j}{2} \begin{pmatrix} \displaystyle
1 &\displaystyle 0 \\\displaystyle
2w_j & \displaystyle-1
\end{pmatrix}\theta_j
\]
with 
\[
\begin{array}{lll}
\displaystyle a_0 = -\sum_{j=1}^n\frac{\nu_j}{2} &
\displaystyle b_0 = \frac{1}{2}+ \sum_{j=1}^n\frac{\nu_j}{2}&
\displaystyle c_0 = \frac{1}{2}-\sum_{j=1}^n\frac{\nu_j}{2}\\
\displaystyle a_1 = \frac{1}{2} + \sum_{j=1}^n\frac{\nu_j}{2}&
\displaystyle b_1 =-\frac{1}{2}- \sum_{j=1}^n\frac{\nu_j}{2} &
\displaystyle c_1= \sum_{j=1}^n\nu_jw_j \\
\displaystyle a_2 = \sum_{j=1}^{n}\frac{\nu_jy_j}{2}\left[ \frac{(1-w_j)}{x_j-1} + \frac{w_j}{x_j} \right]& &
\displaystyle c_2 = \sum_{j=1}^n \frac{y_j}{x_j}\nu_jw_j
\end{array}
\]
And the strongly parabolic Higgs fields are defined by
\begin{align*}
\Theta_j^\infty & = \begin{pmatrix}
-w_j & w_j\\
-w_j & w_j
\end{pmatrix} \phi_0 + \begin{pmatrix}
w_j & -w_j\\
w_j^2 & -w_j
\end{pmatrix} \phi_1+ B_j\cdot\omega + \begin{pmatrix}
w_j & -1\\
w_j^2 & -w_j
\end{pmatrix}\theta_j
\end{align*}
where
\[
B_j=\begin{pmatrix}
\frac{y_j}{2}\left[ \frac{-(1-w_j)^2}{x_j-1} + \frac{w_j^2}{x_j} +\frac{1}{x_j-\lambda}\right] & \frac{-y_j}{(x_j-\lambda)}\\
\frac{y_jw_j^2}{x_j} & -\frac{y_j}{2}\left[ \frac{-(1-w_j)^2}{x_j-1} + \frac{w_j^2}{x_j} +\frac{1}{x_j-\lambda}\right]
\end{pmatrix}.
\]

Given a Fuchsian system $\nabla\in{\rm Syst}^{\sigma}(C,D')$ with $(1:w_j)$ as parabolic direction (over $t_j$) corresponding to $\frac{\nu_j}{2}$, we can write
\[
\nabla = \nabla_{\infty} + s_j\Theta_j^{\infty}
\]
for suitable $s_j\in\mathbb C$. The complementary direction corresponding to $-\frac{\nu_j}{2}$ is $(s_j:w_js_j+\nu_j)$. 

\subsubsection{Transition matrix}
From the trivializations on $U_0$ and $U_\infty$ we may compute a transition affine transformation for the affine bundle ${\rm Syst}^{\sigma}(C,D')$. Since $U_0\cup U_{\infty}$ covers the basis $\BB\simeq (\mathbb P^1)^n$ minus a subvariety of codimension two, then the bundle structure of ${\rm Syst}^{\sigma}(C,D')$ is determined by this affine transformation.

In order to make this affine transformation explicit we note that 
\[
\Theta_j^\infty = w_j^2 \Theta_j^0\left(\frac{1}{w_j}\right),
\]
and
\[
\nabla_0\left(\frac{1}{w}\right) = d+ \begin{pmatrix}
a_0 & b_0\\
c_0 & -a_0
\end{pmatrix} \phi_0 + \begin{pmatrix}
a_1 & b_1\\
0 & -a_1
\end{pmatrix} \phi_1+ \begin{pmatrix}
a_2 & b_2\\
0 & -a_2
\end{pmatrix}\omega + \sum_{j=1}^{n}\frac{\nu_j}{2}\begin{pmatrix}
-1 & \frac{2}{w_j}\\
0 & 1
\end{pmatrix}\theta_j.
\]
with
\[
\begin{array}{lll}
\displaystyle a_0 = \sum_{j=1}^n\frac{\nu_j}{2}&
\displaystyle b_0 = \frac{1}{2}- \sum_{j=1}^n\frac{\nu_j}{2}&
\displaystyle c_0 = \frac{1}{2}+\sum_{j=1}^n\frac{\nu_j}{2}\\
\displaystyle a_1 = \frac{1}{2} -\sum_{j=1}^n\frac{\nu_j}{2}&
\displaystyle b_1 =-\frac{1}{2}+\sum_{j=1}^n\frac{\nu_j}{2} \\
\displaystyle a_2 = \sum_{j=1}^{n}\frac{y_j\nu_j}{2w_j}\left[ \frac{1-w_j}{x_j-1} -\frac{1}{x_j-\lambda}\right]&
\displaystyle b_2 = \sum_{j=1}^{n}\frac{y_j}{(x_j-\lambda)}\frac{\nu_j }{w_j}
\end{array}
\]
Hence 
\begin{equation}\label{eq:splitting}
\nabla_\infty = \nabla_0\left(\frac{1}{w_1}, \dots, \frac{1}{w_n}\right) + \sum_{j=1}^n \nu_jw_j\Theta_j^0\left(\frac{1}{w_j}\right).
\end{equation}
Over the intersection $U_0\cap U_\infty$ we have
\begin{align*}
(s_1, \dots, s_n) &\mapsto \nabla_\infty(w_1,\dots,w_n) + \sum_{j=1}^n s_j\Theta_j^\infty(w_j) = \\ &=\nabla_0\left(\frac{1}{w_1}, \dots, \frac{1}{w_n}\right) + \sum_{j=1}^n \left(\nu_jw_j+ s_jw_j^2\right)\Theta_j^0\left(\frac{1}{w_j}\right) =\\
& = \nabla_0(z_1, \dots, z_n) + \sum_{j=1}^n \left(\nu_jw_j+ s_jw_j^2\right)\Theta_j^0(z_j) =\\
& = \nabla_0(z_1, \dots, z_n) + \sum_{j=1}^n r_j\Theta_j^0(z_j)
\end{align*}
which gives us the transition affine transformation
\begin{equation}\label{eq:aftrans}
	\begin{pmatrix}
		r_1 \\
		\vdots\\
		r_n
	\end{pmatrix} = 
	\begin{pmatrix}
		w_1^2 & \dots & 0 \\
		\vdots &\ddots & \vdots \\
		0 &\dots & w_n^2
	\end{pmatrix}\cdot
	\begin{pmatrix}
		s_1 \\
		\vdots\\
		s_n
	\end{pmatrix} + 
	\begin{pmatrix}
		\nu_1w_1 \\
		\vdots\\
		\nu_nw_n
	\end{pmatrix}.
\end{equation}
Note that if we choose new trivializations such that $r'_j = \frac{\rho_j}{\nu_j}r_j$ and $s'_j =\frac{\rho_j}{\nu_j}s_j$, this transition map becomes
\[
\begin{pmatrix}
	r_1'\\
	\vdots\\
	r_n'
\end{pmatrix} = 
\begin{pmatrix}
	w_1^2 & \dots & 0 \\
	\vdots &\ddots & \vdots \\
	0 &\dots & w_n^2
\end{pmatrix}\cdot
\begin{pmatrix}
	s_1' \\
	\vdots\\
	s_n'
\end{pmatrix} + 
\begin{pmatrix}
	\rho_1w_1 \\
	\vdots\\
	\rho_nw_n
\end{pmatrix}.
\]
Hence the affine bundles ${\rm Con}^\nu\xrightarrow{\pi_+} {\rm Bun}$ are all isomorphic, not depending on $\nu$; this agrees with Theorem \ref{thm:parconSn}. The transition map \eqref{eq:aftrans} can also be written as
\begin{equation}\label{eq:transition}
	\begin{pmatrix}
		\lambda\\
		r_1 \\
		\vdots\\
		r_n
	\end{pmatrix} =  
	\begin{pmatrix}
		1 & 0 & \dots & 0 \\
		\nu_1w_1 & w_1^2 & \dots & 0 \\
		\vdots & \vdots &\ddots & \vdots \\
		\nu_n w_n& 0 &\dots & w_n^2
	\end{pmatrix}\cdot
	\begin{pmatrix}
		\lambda \\
		s_1 \\
		\vdots\\
		s_n 
	\end{pmatrix} 
\end{equation}
and can be thought, owing to \eqref{BB}, as the transition matrix of a vector bundle $\mathcal{E}$ over ${\rm Bun}$ that parameterizes logarithmic $\lambda$-connections: triples $(E_1, \lambda, \nabla)$ where $\nabla \colon E_1 \rightarrow E_1 \otimes \Omega_C(D)$ and $\lambda \in \mathbb{C}$ such that
\[
\nabla(fs) = \lambda s \otimes df + f\nabla(s),
\]
see \cite[Section 4]{SimpLcon}. If $\lambda\neq 0$ then $\frac{1}{\lambda}\nabla$ is an usual logarithmic connection and if $\lambda=0$ then $\nabla$ is a Higgs field. We note that the complement of $U_0\cup U_{\infty}$ is a subvariety of codimension two, hence the transition matrix given in \eqref{eq:transition} determines $\mathcal{E}$.

The projectivization of $\mathcal{E}$ provides a compactification $\overline{{\rm Con}^\nu}$ for the moduli space ${\rm Con}^\nu$; in the boundary we get the projectivized moduli space of Higgs fields
\[
\mathbb{P}\Hig \simeq \overline{{\rm Con}^\nu} \setminus {\rm Con}^\nu.
\]
The inclusion $\mathbb{P}\Hig \hookrightarrow \mathbb{P}(\mathcal{E})$ comes from the natural extension
\begin{equation}\label{eq:extension}
	0\longrightarrow T^*{\rm Bun} {\longrightarrow} \mathcal E {\longrightarrow} \mathcal O_{{\rm Bun}} \longrightarrow 0
\end{equation}
where the first map is the inclusion of Higgs fields and the last is the projection $(E_1,\lambda, \nabla) \mapsto \lambda$. We saw that $\mathcal{E}$ does not depend on $\nu$. However, this extension, hence the inclusion $\mathbb{P}\Hig \hookrightarrow \mathbb{P}(\mathcal{E})$, is determined by the exponents $\nu$.

\begin{thm}\label{thm:extension}
The moduli space ${\rm Con}^\nu$ has compactification  $\overline{{\rm Con}^{\nu}}=\mathbb{P}(\mathcal{E})$, where the boundary divisor is isomorphic to $\mathbb{P}\Hig$, the projectivization of the space of Higgs fields on $E_1$. Moreover, the inclusion $\mathbb{P}\Hig\hookrightarrow \mathbb{P}(\mathcal{E})$ is determined, up to automorphisms of $\mathbb{P}(\mathcal{E})$, by $\left(\nu_{1},\dots, \nu_{n}\right)$.
\end{thm}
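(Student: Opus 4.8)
The plan is to extract both the compactification and the boundary directly from the explicit transition matrix (\ref{eq:transition}), and to reduce the ``Moreover'' assertion to the computation of a single extension class. First I would note that in (\ref{eq:transition}) the coordinate $\lambda$ transforms by the identity, so it descends to a well-defined section of the quotient line bundle in (\ref{eq:extension}), and the open locus $\{\lambda\neq 0\}\subset\mathbb{P}(\mathcal{E})$ is intrinsic. Dehomogenizing at $\lambda=1$ recovers the affine transition (\ref{eq:aftrans}); hence $\{\lambda\neq 0\}$ is precisely the affine bundle ${\rm Syst}^{\sigma}(C,D')\cong {\rm Con}^{\nu}$ of Proposition \ref{prop:consys} and Theorem \ref{thm:parconSn}. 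Since $\mathcal{E}$ is a vector bundle over the projective variety ${\rm Bun}\cong(\mathbb{P}^1)^n$, the space $\mathbb{P}(\mathcal{E})$ is projective, and the open embedding above exhibits it as a compactification $\overline{{\rm Con}^{\nu}}$.

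Next I would identify the boundary. The complementary divisor $\{\lambda=0\}$ is the projectivization of the kernel sub-bundle $\ker(\mathcal{E}\to\mathcal{O}_{\rm Bun})$, which by (\ref{eq:extension}) is $T^*{\rm Bun}$. The identification $T^*{\rm Bun}\cong\Hig$ of the cotangent bundle with the space of strongly parabolic Higgs fields is already made explicit by the frames $\Theta_j^0,\Theta_j^\infty$ satisfying $\Theta_j^\infty=w_j^2\,\Theta_j^0(1/w_j)$, whose gluing is exactly that of $T^*(\mathbb{P}^1)^n$. Thus the boundary divisor is $\mathbb{P}\Hig$, as claimed.

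The heart of the matter is that the inclusion $\mathbb{P}\Hig\hookrightarrow\mathbb{P}(\mathcal{E})$, as a sub-projective-bundle, is the projectivization of the extension (\ref{eq:extension}) and is therefore governed by its class $e\in\mathrm{Ext}^1(\mathcal{O}_{\rm Bun},T^*{\rm Bun})=H^1({\rm Bun},T^*{\rm Bun})$. I would compute this group by K\"unneth: with $T^*{\rm Bun}=\bigoplus_{j=1}^n\pi_j^*\mathcal{O}_{\mathbb{P}^1}(-2)$ and $H^0(\mathbb{P}^1,\mathcal{O})=H^1(\mathbb{P}^1,\mathcal{O}(-2))=\mathbb{C}$, one finds $H^1({\rm Bun},T^*{\rm Bun})\cong\mathbb{C}^n$, the $j$-th factor generated by a \v{C}ech class $\eta_j$ on the cover $\{U_0,U_\infty\}$ represented by $w_j$. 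The lower-left block of (\ref{eq:transition}) is exactly the \v{C}ech $1$-cocycle $(\nu_1w_1,\dots,\nu_nw_n)$ representing $e$; matching $w_j$ with $\eta_j$ then yields $e=(\nu_1,\dots,\nu_n)$. As $U_0\cup U_\infty$ omits only a codimension-two subset of ${\rm Bun}$, this cocycle determines the extension on all of ${\rm Bun}$, so the inclusion is indeed determined by $(\nu_1,\dots,\nu_n)$.

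Finally, for the ``up to automorphisms of $\mathbb{P}(\mathcal{E})$'' clause I would track the freedom in the construction. The reframing $r_j'=(\rho_j/\nu_j)r_j$ is the bundle automorphism $\mathrm{diag}(1,\rho_1/\nu_1,\dots,\rho_n/\nu_n)$, which conjugates the transition matrix of (\ref{eq:transition}) for $\nu$ into the one for $\rho$; together with the global scalar of $\mathrm{Aut}(\mathcal{O}_{\rm Bun})=\mathbb{C}^\ast$ and the factor-permuting automorphisms of ${\rm Bun}=(\mathbb{P}^1)^n$, these describe how the extension class $e$ transforms under automorphisms preserving the fibration and the boundary. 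The main obstacle I anticipate is precisely this last bookkeeping: one must isolate which automorphisms of $\mathbb{P}(\mathcal{E})$ respect the structure of $\overline{{\rm Con}^{\nu}}$ and its boundary $\mathbb{P}\Hig$, and verify that $e=(\nu_1,\dots,\nu_n)$ is the resulting invariant; the sharper statement that the $\nu_j$ are recovered up to permutation alone is then the role of the symplectic refinement in Theorem \ref{thm:D}. By contrast, the identification of the compactification and its boundary is immediate from (\ref{eq:transition}) and (\ref{eq:extension}), and the cohomology computation is routine.
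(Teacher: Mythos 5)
Your proposal is correct and follows essentially the same route as the paper: both reduce the ``Moreover'' clause to computing the class of the extension (\ref{eq:extension}) in $H^1({\rm Bun},T^*{\rm Bun})$ via the \v{C}ech cocycle $\nabla_\infty-\nabla_0=\sum_j\nu_jw_j\Theta_j^0(1/w_j)$ on the cover $\{U_0,U_\infty\}$, and both conclude that the coefficients $\nu_j$ of the linear terms $\nu_jw_j$ are the invariant of the cohomology class. Your K\"unneth identification of $H^1({\rm Bun},T^*{\rm Bun})\cong\mathbb{C}^n$ with generators $[w_j]$ is just a more structural phrasing of the paper's direct observation that adding any coboundary $b_j(w)w_j^2-a_j(1/w)$ leaves the linear terms unchanged.
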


\begin{proof}
The inclusion $\phi\colon\mathbb{P}\Hig \hookrightarrow \mathbb{P}(\mathcal{E})$ comes from the projectivization of \eqref{eq:extension}. We will show that the isomorphism class of this extension is determined by $\left(\nu_{1},\dots, \nu_{n}\right)$. This is equivalent to determining $\phi$, up to composing with an automorphism of $\mathbb{P}(\mathcal{E})$ as a $\mathbb{P}^n$-bundle over ${\rm Bun}$. The proof will follow by  computing the extension class $e^\nu \in H^{1}({\rm Bun}, T^*{\rm Bun})$,  see \cite[p. 185]{At}.

Over $U_0$, the map $\mathcal{O}_{{\rm Bun}} \rightarrow \mathcal{E}^{\nu}$, given by $h\mapsto h\nabla_0$, defines a splitting; the same is true for $\nabla_{\infty}$ over $U_\infty$. Therefore $e^\nu$ is represented, on $U_0\cap U_\infty$, by
\[
\nabla_\infty - \nabla_0 = \sum_{j=1}^n \nu_jw_j\Theta_j^0\left(\frac{1}{w_j}\right),
\]
see the equation in display \eqref{eq:splitting}. Any cohomologous cocycle must be
\begin{align*}
	&&\sum_{j=1}^n \nu_jw_j\Theta_j^0\left(\frac{1}{w_j}\right) + \sum_{j = 1}^n b_j(w_1, \dots w_n)\Theta_j^\infty(w_j) - \sum_{j = 1}^n a_j\left(\frac{1}{w_1}, \dots, \frac{1}{w_n}\right) \Theta_j^0\left(\frac{1}{w_j}\right) =\\
		&&= \sum_{i=1}^n \left[\nu_jw_j + b_j(w_1, \dots w_n)w_j^2 - a_j\left(\frac{1}{w_1}, \dots, \frac{1}{w_n}\right)\right] \Theta_j^0\left(\frac{1}{w_j}\right)
\end{align*}
for some holomorphic functions $a_i$ and $b_i$. In particular, the linear terms $\nu_jw_j$ are left unchanged, whence the extension is completely determined by $\left(\nu_{1},\dots, \nu_{n}\right)$.
\end{proof}

Returning to the affine bundle ${\rm Con}^\nu\xrightarrow{\pi_+} {\rm Bun}$, we have seen that it does not distinguish $\nu$. However, if we consider ${\rm Con}^\nu$ with the inherited symplectic structure from $\contotal$, the picture is different; this will be the topic of the next section. The trivializations in \eqref{eq:aftrans} will be important to bring the symplectic form to the Darboux normal form.

\section{Symplectic structure}\label{sect:symp}
It follows from the work of Iwasaki \cite{Iwasaki91}
that moduli spaces of logarithmic connections on curves carry a natural symplectic structure. 
In fact, there exists a $2$-form defined on a larger moduli space where the poles are allowed
to move on $C$ inducing a Poisson structure whose kernel defines the isomonodromic 
deformations, therefore a Hamiltonian system.
Moreover, through the Riemann-Hilbert correspondence, Iwasaki proved
in \cite{Iwasaki92} that this $2$-form coincides with the symplectic structure on moduli spaces of representations
that originated through the works of  Atiyah-Bott and Goldman. In fact, Iwasaki considers moduli spaces of Sturm-Liouville operators rather than connections. In order to define the symplectic structure globally on our moduli space, the approach of  Arinkin-Lysenko in \cite{AL} and Iwasaki in \cite{Iwasaki92} is more convenient for us. We briefly recall how the tangent space and the symplectic structure are constructed there. Later we will give an explicit formula for the $2$-form on an open set of our moduli space. 

First we recall the following result from \cite[Section 15]{IY}.

\begin{lemma}\label{lem:LocalModels}
	Assume that the connection $(E,\nabla)$ has a simple pole at $t\in C$ 
	with eigenvalues $\{\nu^+,\nu^-\}$, and let $z\colon (C,t)\rightarrow(\mathbb C,0)$ be a local coordinate.
	Then there exists a local trivialization of the vector bundle $\Phi\colon E\vert_{(C,t)}\rightarrow (\mathbb C,0)\times \mathbb C^2$
	such that $\Phi_*\nabla$ is one of the following models:
	\begin{equation}\label{eq:LocalModelDiag}
		d+\begin{pmatrix}\nu^+&0\\ 0&\nu^-\end{pmatrix}\frac{dx}{x},
	\end{equation}
	or, in the resonant case $\{\nu^+,\nu^-\}=\{\nu,\nu+n\}$, $n\in\mathbb Z_{\ge0}$
	\begin{equation}\label{eq:LocalModelRes}
		d+\begin{pmatrix}\nu&x^n\\ 0&\nu+n\end{pmatrix}\frac{dx}{x}.
	\end{equation}
	Moreover, in the resonant case $\nu^+-\nu^-\in \mathbb{Z}$, the model \eqref{eq:LocalModelRes} is generic among the two, i.e. occurs for an open set of the deformations of $(E,\nabla)$.
\end{lemma}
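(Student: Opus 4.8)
The plan is to reduce $\nabla$ to normal form by a recursive sequence of holomorphic gauge transformations, following the classical analytic theory of Fuchsian (regular singular) points as in \cite{IY}. In a local trivialization write $\nabla = d + A(x)\tfrac{dx}{x}$ with $A(x)=\sum_{k\geq 0}A_k x^k$ holomorphic and $A_0={\rm Res}_t(\nabla)$; a gauge change by $P(x)\in GL_2(\mathbb{C}\{x\})$ replaces $A(x)$ by $\tilde A(x)=x\,P^{-1}P'+P^{-1}AP$. First I would apply a constant gauge to put $A_0$ in Jordan form: $\mathrm{diag}(\nu^+,\nu^-)$ when $\nu^+\neq\nu^-$, and either a scalar or a single Jordan block when $\nu^+=\nu^-$ (the case $n=0$, where the two models correspond precisely to these two Jordan types of $A_0$, with no higher-order obstruction).

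Next I would seek $P=I+\sum_{k\geq 1}P_k x^k$ removing the higher coefficients order by order. A direct expansion shows that, once $A_1,\dots,A_{k-1}$ have been cleared, the order-$k$ coefficient transforms as $A_k\mapsto A_k+(\mathrm{ad}_{A_0}+kI)(P_k)$, so solvability is governed by the invertibility of $L_k:=\mathrm{ad}_{A_0}+kI$ on $2\times 2$ matrices. With $A_0=\mathrm{diag}(\nu^+,\nu^-)$ the operator $L_k$ has eigenvalues $k$ on the diagonal entries and $k\pm(\nu^+-\nu^-)$ on the off-diagonal entries, hence is invertible exactly when $\nu^+-\nu^-\neq\pm k$. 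In the non-resonant case $\nu^+-\nu^-\notin\mathbb{Z}$ every $L_k$ is invertible, so all higher terms are removed and I obtain the diagonal model (\ref{eq:LocalModelDiag}).

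In the resonant case $\{\nu^+,\nu^-\}=\{\nu,\nu+n\}$, taking $A_0=\mathrm{diag}(\nu,\nu+n)$ the only failure occurs at $k=n$ and only on the $(1,2)$-entry, whose $L_n$-eigenvalue $n-n$ vanishes. Thus I can remove all terms of orders $1,\dots,n-1$, and at order $n$ the diagonal and $(2,1)$ entries as well, leaving a residual $c\,x^n$ in the upper-right corner. For $k>n$ the operator $L_k$ is again invertible; the extra commutator terms produced by the residual $c\,E_{12}x^n$ involve only the previously determined $P_j$, so the same invertibility clears every higher order, reducing $A(x)$ to $\mathrm{diag}(\nu,\nu+n)+c\,E_{12}x^n$. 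If $c=0$ this is (\ref{eq:LocalModelDiag}); if $c\neq 0$ a diagonal rescaling normalizes $c$ to $1$, giving (\ref{eq:LocalModelRes}).

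The main obstacle is convergence: the construction yields a priori only a formal gauge $P$, and one must show it is analytic. This is the classical fact that at a regular singular point formal gauge equivalence implies analytic gauge equivalence, which I would invoke from \cite{IY}; alternatively a majorant estimate using $\|L_k^{-1}\|=O(1/k)$ together with the geometric decay of $\|A_k\|$ gives convergence directly. For the genericity claim, I would observe that the resonant coefficient $c$ is a fixed polynomial in the finitely many Taylor coefficients $A_0,\dots,A_n$, and that whether $c$ vanishes is invariant under the remaining diagonal gauge freedom (which only rescales $c$). Hence the locus $\{c=0\}$ is closed; since $c$ is not identically zero on the deformation family (the model (\ref{eq:LocalModelRes}) itself realizes $c\neq 0$), the set where (\ref{eq:LocalModelRes}) holds is a nonempty open, and thus dense, subset of the deformations, establishing that (\ref{eq:LocalModelRes}) is the generic model.
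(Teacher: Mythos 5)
Your proposal is correct, and it is essentially the argument the paper relies on: the paper gives no proof of this lemma, quoting it directly from \cite[Section 15]{IY}, and your recursive Poincar\'e--Dulac-type normalization (clearing Taylor coefficients via the operators $\mathrm{ad}_{A_0}+kI$, with the single resonant obstruction at order $k=n$ in the $(1,2)$-entry, plus the classical convergence statement at a regular singular point) is precisely the standard proof found there. The genericity argument via the holomorphic invariant $c$ is also the intended one.
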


Next we want to describe the tangent space of $\contotal$ at a (generic) connection $(E,\nabla)\in \contotal$. Consider a first order deformation $(E_\epsilon,\nabla_\epsilon)$ of $(E,\nabla)$; we briefly recall the definition to set up notation. Denote $R= \mathbb{C}[\epsilon]$, with $\epsilon^2 =0$, the ring of dual numbers. Then $E_\epsilon$ is a vector bundle over $C\times {\rm Spec}(R)$, flat over ${\rm Spec}(R)$,  such that the restriction induced by $R \twoheadrightarrow R/(\epsilon) = \mathbb{C}$ gives $E_\epsilon \otimes_{R} \mathbb{C} = E$. Moreover there exist a covering $\{U_i\}$ of $C$ and isomorphisms 
\begin{equation}\label{eq:isodef}
    \Phi_i \colon E_\epsilon|_{U_i \times {\rm Spec}(R)}  \stackrel{\sim}{\longrightarrow} E|_{U_i} \otimes_{\mathbb{C}} R,
\end{equation}
such that $\Phi_i\otimes 1\colon E_\epsilon|_{U_i \times {\rm Spec}(R)} \otimes_{R} \mathbb{C} = E|_{U_i} \rightarrow E|_{U_i}$ is the identity. On the other hand, $\nabla_\epsilon$ is a relative connection, i.e. an $R$-linear map $\nabla_\epsilon\colon E_\epsilon \rightarrow E_\epsilon \otimes_{\mathbb{C}} \Omega_C(D)$ satisfying the relative Leibniz rule, such that its restriction to $E_\epsilon \otimes_{R} \mathbb{C} = E$ is $\nabla$. Up to refining $\{U_i\}$, the isomorphisms $\Phi_i$ plus trivializations of $E$ amount to writing 
\begin{equation}\label{eq:trivnablaep}
    \nabla_\epsilon |_{U_i \times {\rm Spec}(R)} = d+ A_i^0 + \epsilon A_i^1
\end{equation}
where $\nabla|_{U_i} = d+ A_i^0$  and $A_i^1 \in Hom( E|_{U_i} , E\otimes\Omega_C(D)|_{U_i} )$. 
We are specially interested in deformations that preserve the trace and residual eigenvalues, which impose conditions on $A_i^1$.



We can refine $\{U_i\}$ further so that the restriction of $\nabla$ is either trivial or can be normalized like in Lemma \ref{lem:LocalModels}. Since the local models \eqref{eq:LocalModelDiag} with $\nu^+-\nu^-\not\in\mathbb Z$, and (\ref{eq:LocalModelRes})
are stable under small deformations with fixed eigenvalues, we deduce that there exist bundle automorphisms 
\[
\Psi_i \colon E|_{U_i} \otimes_{\mathbb{C}} R \stackrel{\sim}{\longrightarrow} E|_{U_i} \otimes_{\mathbb{C}} R
\]
such that $\nabla_\epsilon |_{U_i \times {\rm Spec}(R)} = \Psi_i^\ast (\nabla \otimes 1)$. Therefore we may assume that the maps $\Phi_i$ of \eqref{eq:isodef} are such that $A_i^1 = 0$ in \eqref{eq:trivnablaep}.


Over $U_{i}\cap U_{j}$ the comparison $\Phi_{ij}=\Phi_i\circ\Phi_j^{-1}$ defines an automorphism of $E\vert_{U_{i}\cap U_{j}}\otimes_{\mathbb{C}} R $ preserving the connection $\nabla\vert_{U_{i}\cap U_{j}} \otimes 1$. By construction $\Phi_{ij} = \text{id}+\epsilon\phi_{ij}$ and we get 
\begin{align*}
    (\nabla \otimes 1)(\Phi_{ij}(s_0+\epsilon s_1)) & = (\nabla \otimes 1) (s_0+\epsilon( s_1 + \phi_{ij}(s_0))) = \nabla(s_0)+\epsilon[ \nabla(s_1) + \nabla(\phi_{ij}(s_0))]\\
    \rotatebox{90}{=} \quad \quad \quad \quad \quad & \\
    \Phi_{ij}((\nabla \otimes 1)(s_0+\epsilon s_1)) & =  \Phi_{ij}(\nabla(s_0)+\epsilon \nabla(s_1)) = \nabla(s_0)+\epsilon[ \nabla(s_1)  + \phi_{ij}(\nabla(s_0))].
\end{align*}
Hence $\nabla(\phi_{ij}(s_0)) = \phi_{ij}(\nabla(s_0)) $, i.e., $\phi_{ij}$ preserves $\nabla\vert_{U_{i}\cap U_{j}}$. 

Since we are assuming that the trace connection is fixed, $\det(\Phi_i)$ comes from a global isomorphism $\rho\colon \det(E_\epsilon) \xrightarrow{\sim} \det(E)\otimes_{\mathbb{C}}R$. Hence we have that $1 = \det \Phi_{ij} = 1 + \epsilon \, {\rm tr}\phi_{ij}$. Then ${\rm tr}\phi_{ij} =0$ and we get an element $\{\phi_{ij}\}$ of $H^1(\mathfrak{sl}(E,\nabla))$, 
the sheaf of trace-free endomorphisms preserving $\nabla$. Therefore 
\[
T_{(E,\nabla)} \contotal \simeq H^1(\mathfrak{sl}(E,\nabla)).
\]



Observe that $\mathfrak{sl}(E,\nabla)$ is a subsheaf of $\mathfrak{sl}(E,\textbf{p})$ the sheaf of endomorphisms of the quasi-parabolic bundle $(E,\textbf{p})$ with trace zero. Also note that $\nabla$ induces a natural (logarithmic) connection on the vector bundle $\mathfrak{sl}(E)$, namely $\nabla^1\colon \phi\mapsto\nabla\circ\phi-\phi\circ\nabla$. We then construct a complex of sheaves of abelian groups 
\begin{align}\label{eq:shortexactnabla1}
0 \longrightarrow \mathfrak{sl}(E,\nabla) \longrightarrow \mathfrak{sl}(E,\textbf{p}) \stackrel{\nabla^1}{\longrightarrow} \mathfrak{sl}(E,\textbf{p})\otimes\Omega^1_C(D)^{\text{nil}}\longrightarrow 0
\end{align}
where the rightmost member corresponds to parabolic Higgs fields with nilpotent residues. The two sheaves on the right correspond to $\mathcal F^0$ and $\mathcal F^1$ in \cite{IIS,Inaba,IS,Komyo}. Note that $\nabla^1$ is not $\mathcal{O}_C$-linear and $\mathfrak{sl}(E,\nabla)$ is not an $\mathcal{O}_C$-module. Next we show that for a generic connection the deformation complex \eqref{eq:shortexactnabla1} is indeed exact.

\begin{lemma}\label{lem:seqdef}
For a generic connection $(E,\nabla)$ in the moduli space, the sequence in display \eqref{eq:shortexactnabla1} is exact.
\end{lemma}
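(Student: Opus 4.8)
The plan is to verify exactness stalkwise, since exactness of a complex of sheaves is a local property. Away from the parabolic points exactness will be immediate, and at each pole $t_i$ I would reduce everything to solving a first–order linear system in the local normal form of Lemma \ref{lem:LocalModels}.

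First I would dispose of the two left-hand terms. The inclusion $\mathfrak{sl}(E,\nabla)\hookrightarrow\mathfrak{sl}(E,\textbf{p})$ is injective, and its image is exactly $\ker\nabla^1$: an endomorphism with $\nabla^1\phi=0$ satisfies $[{\rm Res}_{t_i}\nabla,\phi(t_i)]=0$, so $\phi(t_i)$ commutes with a residue having distinct eigenvalues and hence preserves its eigenlines; in particular $\phi$ is parabolic, giving $\mathfrak{sl}(E,\nabla)=\ker\nabla^1$. The same residue computation shows that $\nabla^1$ does land in parabolic Higgs fields with nilpotent residue: if $\phi$ preserves $p_i$, then in an eigenbasis adapted to $p_i$ the matrix $\phi(t_i)$ is upper triangular and $[{\rm Res}_{t_i}\nabla,\phi(t_i)]$ is strictly upper triangular, hence nilpotent and parabolic. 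Thus the sequence is a complex, exact at its first two terms, and the whole content is the surjectivity of $\nabla^1$.

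Surjectivity away from $D$ is clear, because there $\nabla^1$ is a holomorphic connection on the rank three bundle $\mathfrak{sl}(E)$, and such a connection is locally an epimorphism of sheaves. So I only need the stalk at a pole $t=t_i$. Writing $\nabla=d+A\tfrac{dx}{x}$ and $\phi=\left(\begin{smallmatrix} a & b\\ c & -a\end{smallmatrix}\right)$, the equation $\nabla^1\phi=\psi$ becomes $x\phi'+[A,\phi]=M$ with $\psi=M\tfrac{dx}{x}$, and I would expand in powers of $x$ and solve term by term. In the non–resonant model (\ref{eq:LocalModelDiag}), with $\delta:=\nu^+-\nu^-\notin\mathbb Z$, the three scalar equations decouple with coefficients $k$, $k+\delta$, $k-\delta$ that never vanish, so a parabolic solution exists; moreover the parabolic condition $c(0)=0$ is forced automatically because $\delta\neq0$.

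The main obstacle, and the reason the hypothesis is only that the connection be \emph{generic}, is the resonant case $\nu^+-\nu^-\in\mathbb Z$ (here $m:=|\nu_i^+-\nu_i^-|\ge1$). There the off–diagonal recursion acquires a vanishing coefficient at one order, which genuinely obstructs surjectivity for the split residue $\mathrm{diag}(\nu,\nu+m)\tfrac{dx}{x}$. The key is that for a generic connection Lemma \ref{lem:LocalModels} puts the pole in the non–split model (\ref{eq:LocalModelRes}) carrying the coupling term $x^m$. Computing $[A,\phi]$ for this $A$, the diagonal equation reads $xa'+x^m c=\tilde a$, leaving the constant $a_0$ free, while the obstructed off–diagonal equation at order $m$ reads $0\cdot b_m=\tilde b_m+2a_0$; choosing $a_0=-\tfrac12\tilde b_m$ absorbs the obstruction and lets one solve for the remaining coefficients (the same computation works for either choice of parabolic eigenline, with the roles of the off–diagonal entries exchanged). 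This yields the required local preimage, giving surjectivity at every pole and hence exactness of (\ref{eq:shortexactnabla1}).
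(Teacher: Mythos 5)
Your proof is correct and follows essentially the same route as the paper: reduce to the surjectivity of $\nabla^1$, which is clear at regular points, is handled at non-resonant poles by the diagonal model (\ref{eq:LocalModelDiag}), and at resonant poles uses the generic non-split model (\ref{eq:LocalModelRes}), where the free constant term of the diagonal entry absorbs the obstruction in the off-diagonal recursion. The only difference is cosmetic: you solve the local equations by an explicit power-series recursion, while the paper phrases the same computation as successive integration of the three scalar equations.
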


\begin{proof} First note that the kernel of $\nabla^1$ is precisely $\mathfrak{sl}(E,\nabla)$ and the leftmost map is the inclusion. Thus we only need to show that $\nabla^1$ is surjective, for a generic $\nabla$. It will follow from a direct computation with local models.

Outside the polar locus, $(E,\nabla)$ is locally trivial and the map $\nabla^1$ may be written as 
\[
\begin{pmatrix}a&b\\ c&-a\end{pmatrix}\longmapsto \begin{pmatrix}da&db\\ dc&-da\end{pmatrix}
\]
so that local surjectivity of $\nabla^1$ at a regular point follows from local integration of holomorphic $1$-forms.
On the other hand, at a non resonant pole, i.e. $\nu^+-\nu^-\not\in\mathbb{Z}$, 
the connection $(E,\nabla)$ is locally like \eqref{eq:LocalModelDiag}
and the map $\nabla^1$ may be written as 
\[
\begin{pmatrix}a&b\\ c&-a\end{pmatrix}\longmapsto \begin{pmatrix}da&db+(\nu^+-\nu^-)b\frac{dx}{x}\\ dc+(\nu^--\nu^+)c\frac{dx}{x}&-da\end{pmatrix}
\]
where $c(0)=0$ (parabolic condition); for the local surjectivity, we just have to check that a matrix of $1$-forms
\[
\begin{pmatrix}\alpha&\beta\\ \gamma&-\alpha\end{pmatrix}
\]
is in the image if and only if $\alpha$ and $\gamma$ are holomorphic (parabolic and nilpotent condition), 
and $\beta$ has a simple pole, which is straightforward.
Finally, consider a resonant pole of the form \eqref{eq:LocalModelRes}.
Then the map $\nabla^1$ may be written as 
\[
\begin{pmatrix}a&b\\ c&-a\end{pmatrix}\longmapsto \begin{pmatrix}da+cx^n\frac{dx}{x}&db-(nb+2ax^n)\frac{dx}{x}\\ dc+nc\frac{dx}{x}&-da-cx^n\frac{dx}{x}\end{pmatrix}.
\]
To prove the surjectivity in that case, we have to successively integrate 
\[
\left\{\begin{matrix} 
dc+nc\frac{dx}{x}&=\gamma\\
da+cx^n\frac{dx}{x}&=\alpha\\
db-(nb+2ax^n)\frac{dx}{x}&=\beta
\end{matrix}\right.
\]
In the first equation, $c(x)$ can be derived such that $c(0)=0$ (parabolicity); therefore we can integrate
the second one and find $a(x)$ up to a constant; finally, for a good choice of $a(0)$, the last equation admits a solution $b(x)$.
\end{proof}

Hereafter we will assume that $(E,\textbf{p})$ is $\mu$-stable for some weight $\mu$ and we will denote by $\mathcal{B}$ the corresponding moduli space of parabolic bundles. 

Note that $(E,\textbf{p})$ is simple, i.e. $H^0(\mathfrak{sl}(E,\textbf{p}))=0$, then we get $H^1(\mathfrak{sl}(E,\textbf{p})\otimes\Omega^1_C(D)^{\text{nil}})=0$
by Serre duality. The long exact sequence of cohomology of \eqref{eq:shortexactnabla1} gives
\begin{equation}\label{eq:extractlongnabla1}
	0 \longrightarrow H^0(\mathfrak{sl}(E,\textbf{p})\otimes\Omega^1_C(D)^{\text{nil}}) \longrightarrow H^1(\mathfrak{sl}(E,\nabla)) \rightarrow H^1(\mathfrak{sl}(E,\textbf{p})) \longrightarrow 0
\end{equation}
In the middle we get the tangent space to the moduli space of connections at $(E,\nabla)$. On the left-hand side, we get the space of Higgs fields, which does not depend on $\nabla$. And on the right-hand side we have $H^1(\mathfrak{sl}(E,\textbf{p}))$ that is the tangent space to $\mathcal{B}$ at $(E,\bf{p})$. In particular, the image of $\{\phi_{ij}\}$ in $H^1(\mathfrak{sl}(E,\textbf{p}))$, where we omit the connection and just keep track of the parabolic data, corresponds to the underlying first order deformation of the parabolic bundle.

We can check that the map 
$H^0(\mathfrak{sl}(E,\textbf{p})\otimes\Omega^1_C(D)^{\text{nil}})\hookrightarrow H^1(\mathfrak{sl}(E,\nabla))$ of \eqref{eq:extractlongnabla1}
is the derivative of the natural action $\nabla\mapsto\nabla+\theta$ for a global parabolic Higgs field
on the affine space of parabolic connections on $(E,\textbf{p})$.
Indeed, if we write $\nabla_\epsilon=\nabla+\epsilon\theta$ and $\Phi_i=\text{id}+\epsilon\phi_i $,
then we get 
\begin{align*}
	(\nabla+\epsilon\theta)|_{U_i} = (\Phi_i)^*\nabla &=(\text{id}-\epsilon\phi_i)\circ\nabla\circ(\text{id}+\epsilon\phi_i) = \\
	&=\nabla+\epsilon(\nabla\circ\phi_i-\phi_i\circ\nabla) =\nabla + \epsilon\nabla^1(\phi_i)
\end{align*}
The first order deformation is therefore encoded in $\{\phi_j-\phi_i\}$, which, by construction, has zero image in $H^1(\mathfrak{sl}(E,\textbf{p}))$. But the image of $\theta$ in $H^1(\mathfrak{sl}(E,\nabla))$ given by 
\eqref{eq:extractlongnabla1} gives exactly the same formula: we first integrate $\nabla^1\phi_i=\theta\vert_{U_i}$
and then associate $\{\phi_j-\phi_i\}$.

One can define a bilinear map on $T_{(E,\nabla)} \contotal$ as follows:
\begin{equation}\label{eq:DefSymplecticInfinitesimal}
	\begin{tikzcd}[row sep=1.pt, ampersand replacement=\&]
		H^1(\mathfrak{sl}(E,\nabla))\times H^1(\mathfrak{sl}(E,\nabla)) \arrow[r, "\displaystyle \omega_{\rm Con}"] \& H^2(\mathbb C)\simeq\mathbb C;\\
		( \{\phi_{ij}\} , \{\psi_{ij}\} ) \arrow[r, maps to,shorten <= 2.8em] \& \{u_{ijk}={\rm tr}(\phi_{ij}\psi_{jk})\}.
	\end{tikzcd}
\end{equation}
One can choose the covering $\{U_i\}$ such that $(E,\nabla)$ is trivial over $U_i\cap U_j$. 
Observe that $\phi_{ij}\psi_{jk}$ defines a section of $\mathfrak{gl}(E,\nabla)$
for which trace and determinant are constant; indeed, in a trivialization of $(E,\nabla)$ 
on $U_i\cap U_j\cap U_k$, the section $\phi_{ij}\psi_{jk}$
is constant. This is why $u_{ijk}$ is a section of the constant sheaf $\mathbb C$. 
One can show that this bilinear form is non degenerate, and defines a closed $2$-form $\omega_{\rm Con}$ on $\contotal$ which is symplectic: 
\[
d\omega_{\rm Con}\equiv0 \text{ and } \omega^n_{\rm Con}=\underbrace{\omega_{\rm Con}\wedge\cdots\wedge\omega_{\rm Con}}_{n\ \text{times}}\neq 0,
\]
i.e. $\omega^n_{\rm Con}$ defines a holomorphic volume form, a trivialization of the canonical bundle. We refer to \cite{Iwasaki91}
for details and proofs.

\begin{remark}
For the non generic resonant case
\[
d+\begin{pmatrix}\nu&0\\ 0&\nu+n\end{pmatrix}\frac{dx}{x},\ \ \ n\in\mathbb{Z}_{\ge0}
\]
the sequence \eqref{eq:shortexactnabla1} is not exact anymore: $\nabla^1$ is not surjective. 
This is why the description of the symplectic structure is more complicated in \cite{AL,IIS, Inaba,IS,Komyo}.
They use hypercohomology to overcome this difficulty. In our case, we want to understand the symplectic structure
at a generic point of the moduli space, so it is not necessary to consider this kind of models.
\end{remark}

In order to describe the symplectic structure $\omega_{\rm Con}$, we will make a reduction step. 
We will see that, locally, we can split $T_{(E,\nabla)}\contotal$ as a direct sum 
of two distinguished subspaces: one concerning Higgs fields 
and the other being the tangent space to a Lagrangian submanifold. 
This will allow us to compare $\omega_{\rm Con}$ with the natural symplectic form for Higgs fields.

Consider 
\[
{\rm Higgs}(E, {\bf p}) = H^0(\mathfrak{sl}(E,\textbf{p})\otimes\Omega^1_C(D)^{\text{nil}})
\]
the space of Higgs fields for $E$ with nilpotent residues preserving ${\bf p}$. We define a pairing 
\begin{equation}\label{eq:PerfectPairing1}
\begin{tikzcd}[row sep=1.pt, ampersand replacement=\&]
	{\rm Higgs}(E, {\bf p}) \times H^1(\mathfrak{sl}(E,\textbf{p})) \arrow[r, "\displaystyle \eta"] \& H^1(\Omega^1_C)\simeq \mathbb{C} \\
	(\Theta,\{\phi_{ij}\}) \arrow[r, maps to,shorten <= 3.8em] \& \{{\rm tr}(\Theta\cdot \phi_{ij}) \}
\end{tikzcd}
\end{equation}
One proves, see \cite{Iwasaki91}, that $\eta$ is a perfect pairing via identifying $H^1(\mathfrak{sl}(E,\textbf{p}))^\vee\simeq T^*_{(E,{\bf p})}\mathcal{B}$ 
and $\text{Higgs}(E,\textbf{p})$. 
In other words, pairing $\eta$ corresponds to the Liouville form on $T^*\mathcal{B}$.

Consider the following diagram
\begin{equation}\label{diag:partialSymplectic}
	\begin{tikzcd}[row sep=5.pt, column sep = 4.5em]
		& H^1(\mathfrak{sl}(E,\nabla))\times H^1(\mathfrak{sl}(E,\nabla)) \arrow[r, "\displaystyle\omega_{\rm Con}" ] & H^2(\mathbb C) \\
	\text{Higgs}(E,\textbf{p})	\times H^1(\mathfrak{sl}(E,\nabla)) \arrow[dr] \arrow[ur] && \\
		&	\text{Higgs}(E,\textbf{p})\times H^1(\mathfrak{sl}(E,\textbf{p}))\arrow[r,"\displaystyle \eta"] & H^1(\Omega^1)\arrow[uu, "\displaystyle\sim" labl, swap, "\displaystyle \delta"]
	\end{tikzcd}
\end{equation}
where we have used the maps from \eqref{eq:extractlongnabla1} and $\delta$ is the connecting morphism coming from the long exact sequence associated to 
\[
0\longrightarrow\mathbb{C} \longrightarrow \mathcal{O}_C\stackrel{d}{\longrightarrow}\Omega^1_C\longrightarrow 0.
\]
Then we reconcile $\omega_{\rm Con}$ and the pairing $\eta$.

\begin{lemma}\label{Lem:CommutativePartDiag} 
The diagram \eqref{diag:partialSymplectic} is commutative.
\end{lemma}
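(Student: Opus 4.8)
The plan is to compute both composites in the diagram at the level of Čech cochains for a covering $\{U_i\}$ of $C$ on which $(E,\nabla)$ is normalized as in Lemma~\ref{lem:LocalModels}, and to verify they define the same class in $H^2(\mathbb C)$. First I would unwind the three maps issuing from the node $\text{Higgs}(E,\textbf{p})\times H^1(\mathfrak{sl}(E,\nabla))$. The upward arrow sends $(\Theta,\{\psi_{ij}\})$ to $(\iota(\Theta),\{\psi_{ij}\})$, where $\iota$ is the connecting homomorphism of (\ref{eq:extractlongnabla1}): by Lemma~\ref{lem:seqdef} one lifts $\Theta$ to local sections $\phi_i$ of $\mathfrak{sl}(E,\textbf{p})$ with $\nabla^1\phi_i=\Theta|_{U_i}$, and then $\iota(\Theta)=\{\phi_j-\phi_i\}\in H^1(\mathfrak{sl}(E,\nabla))$. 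The downward arrow sends $(\Theta,\{\psi_{ij}\})$ to $(\Theta,\{\psi_{ij}\})$, with $\{\psi_{ij}\}$ now viewed in $H^1(\mathfrak{sl}(E,\textbf{p}))$ through the tautological projection. Commutativity therefore amounts to the identity
\[
\omega_{\rm Con}(\{\phi_j-\phi_i\},\{\psi_{ij}\}) \;=\; \delta\bigl(\eta(\Theta,\{\psi_{ij}\})\bigr)
\]
of classes in $H^2(\mathbb C)$.

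The key step is a local computation. Writing $\nabla=d+A$ on each $U_i$, for any endomorphism $\phi$ one has $\nabla^1\phi=d\phi+[A,\phi]$, and a section $\psi$ lies in $\mathfrak{sl}(E,\nabla)$ precisely when $d\psi=-[A,\psi]$. I claim that for such a $\nabla$-horizontal $\psi$,
\[
{\rm tr}\bigl(\nabla^1(\phi)\cdot\psi\bigr)=d\,{\rm tr}(\phi\psi).
\]
Indeed, the Leibniz rule gives $d\,{\rm tr}(\phi\psi)={\rm tr}(d\phi\cdot\psi)+{\rm tr}(\phi\,d\psi)={\rm tr}(d\phi\cdot\psi)-{\rm tr}(\phi[A,\psi])$, while cyclicity of the trace yields ${\rm tr}([A,\phi]\psi)=-{\rm tr}(\phi[A,\psi])$; adding these establishes the claim.

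Finally I would assemble the two sides. Applying the identity with $\phi=\phi_i$ and $\psi=\psi_{ij}$ (which is $\nabla$-horizontal, being a section of $\mathfrak{sl}(E,\nabla)$) shows that on $U_i\cap U_j$ one has ${\rm tr}(\Theta\,\psi_{ij})=d\,{\rm tr}(\phi_i\psi_{ij})$. Thus $f_{ij}:={\rm tr}(\phi_i\psi_{ij})$ is a holomorphic primitive of the cocycle $\eta(\Theta,\{\psi_{ij}\})=\{{\rm tr}(\Theta\,\psi_{ij})\}$, so the connecting map $\delta$ of the sequence $0\to\mathbb C\to\mathcal O_C\xrightarrow{d}\Omega^1_C\to0$ sends this class to the Čech $2$-cocycle $\{f_{jk}-f_{ik}+f_{ij}\}$. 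Expanding with the cocycle relation $\psi_{ik}=\psi_{ij}+\psi_{jk}$ collapses it to ${\rm tr}\bigl((\phi_j-\phi_i)\psi_{jk}\bigr)$, which is exactly the representative of $\omega_{\rm Con}(\{\phi_j-\phi_i\},\{\psi_{ij}\})$ prescribed by (\ref{eq:DefSymplecticInfinitesimal}). This gives the desired equality of classes and hence the commutativity.

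I expect the main obstacle to be bookkeeping rather than conceptual: one must run the two connecting maps $\iota$ and $\delta$ with a single coherent choice of local data (the same liftings $\phi_i$), keep the Čech index and sign conventions aligned throughout, and confirm that the $1$-forms ${\rm tr}(\Theta\,\psi_{ij})$ are genuinely holomorphic on $C$ despite the poles of $\Theta$ along $D$ — this is precisely the holomorphicity underlying the well-definedness of $\eta$ in $H^1(\Omega^1_C)$, and it is what guarantees that $f_{ij}$ is a section of $\mathcal O_C$.
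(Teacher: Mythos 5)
Your proof is correct and follows essentially the same route as the paper: lift $\Theta$ locally as $\nabla^1\phi_i$, establish ${\rm tr}(\nabla^1(\phi)\cdot\psi)=d\,{\rm tr}(\phi\psi)$ for $\nabla$-horizontal $\psi$ (the paper deduces this from $\nabla^1(F\circ G)=\nabla^1(F)\circ G+F\circ\nabla^1(G)$ and ${\rm tr}(\nabla^1 F)=d\,{\rm tr}(F)$, you from the explicit local form $\nabla=d+A$), and then collapse the \v{C}ech $2$-cocycle $\{f_{jk}-f_{ik}+f_{ij}\}$ via $\psi_{ik}=\psi_{ij}+\psi_{jk}$. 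No substantive differences.
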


\begin{proof}Let $(\theta,\{\psi_{ij}\})$ be an element of $\text{Higgs}(E,\textbf{p})	\times H^1(\mathfrak{sl}(E,\nabla)) $. As $\theta$ is a parabolic Higgs field,
one can locally write $\theta\vert_{U_i}=\nabla^1(\varphi_i)$ and the upper map yields 
$(\{\varphi_j-\varphi_i\},\{\psi_{ij}\}) \in H^1(\mathfrak{sl}(E,\nabla))^2$, which is sent to $\{\text{tr}((\varphi_j-\varphi_i)\circ\psi_{jk})\}\in H^2(\mathbb C)$. On the other hand, by first going down, the pairing $\eta$ from \eqref{eq:PerfectPairing1} 
gives $\{\text{tr}(\theta\circ\psi_{ij})\}\in H^1(\Omega^1)$. To compute the image by $\delta$, we first remark that we have
\begin{align*}
\text{tr}(\theta\circ\psi_{ij})&= \text{tr}(\nabla^1(\varphi_i) \circ \psi_{ij}) =\\
& = \text{tr}(\nabla^1(\varphi_i \circ \psi_{ij}))-\text{tr}(\varphi_i \circ \nabla^1(\psi_{ij})) = \\
&= \text{tr}(\nabla^1(\varphi_i \circ \psi_{ij})) =\\
&= d(\text{tr}(\varphi_i \circ \psi_{ij}))
\end{align*}

Here, we have used that $\nabla^1(\psi_{ij}) = 0$ and the properties $\nabla^1(F\circ G)=\nabla^1(F)\circ G+F\circ \nabla^1(G)$ and 
$\text{tr}(\nabla^1(F))=d(\text{tr}(F))$, which can be deduced from a local computation in matrix form.
Therefore, we have 
\begin{align*}
	\delta(\{\text{tr}(\theta\circ\psi_{ij})\}) & = \{\text{tr}(\varphi_i \circ \psi_{ij})+\text{tr}(\varphi_j \circ \psi_{jk})-\text{tr}(\varphi_i \circ \psi_{ik})\}=\\
	& = \{\text{tr}(\varphi_i \circ (\psi_{ij}-\psi_{ik}))+\text{tr}(\varphi_j \circ \psi_{jk})\}= \\
	& = \{\text{tr}((\varphi_j-\varphi_i)\circ\psi_{jk})\}
\end{align*}
and get the same result.
\end{proof}

Given a choice of directions $\epsilon \in \{+.-\}^n$, let $\mathcal{C}$ denote the open subset of $\contotal$ composed by the connections $(E,\nabla)$ such that $(E, {\bf p}^\epsilon (\nabla))$ belongs to $\mathcal{B}$. Recall that a subvariety is called Lagrangian the restriction of symplectic form vanishes identically.

Assume now that we are given a local section $\nabla_0\colon B\rightarrow \mathcal{C}$ on an open subset $B\subset\mathcal{B}$.
Then we can define an isomorphism to the moduli space of parabolic Higgs bundles
\[
\Psi_{\nabla_0} \colon \left.\mathcal{C}\right|_B \longrightarrow\left.{\rm Higgs}\right|_B 
\]
by setting $(E,\nabla)\mapsto(E,\nabla-\nabla_0(E,\textbf{p}))$; the inverse being given by $(E,\varphi)\mapsto(E,\nabla_0(E,\textbf{p})+\varphi)$. 
Recall that \eqref{eq:PerfectPairing1} identifies Higgs fields and the cotangent space to $\mathcal{B}$. This yields an identification between the moduli space of parabolic Higgs bundles and the cotangent bundle $T^*\mathcal{B}$ which, itself, carries the Liouville symplectic structure that we will call $\omega_{\text{Higgs}}$.

Recall that $\Psi_{\nabla_0}$ is a symplectomorphism if $\Psi^*_{\nabla_0}\omega_{\text{Higgs}}= \omega_{\text{Con}}$. On the other hand, we say that the section $\nabla_0$ is Lagrangian if so is its image.

\begin{prop}\label{prop:LagrangianReduction}
The isomorphism $\Psi_{\nabla_0}$ is a symplectomorphism if and only if the section $\nabla_0\colon B\longrightarrow\mathcal{C}$ is Lagrangian.
\end{prop}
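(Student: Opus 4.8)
The plan is to compare the two symplectic forms on $\mathcal{C}|_B$ by analysing their difference. Write $\pi\colon \mathcal{C}\to\mathcal{B}$, $(E,\nabla)\mapsto(E,{\bf p}^\epsilon(\nabla))$, for the forgetful projection, and set $\omega' := \Psi_{\nabla_0}^*\omega_{\text{Higgs}}$, so that the assertion becomes $\omega_{\text{Con}} = \omega'$ if and only if $\nabla_0$ is Lagrangian. Both $\omega_{\text{Con}}$ and $\omega'$ are closed $2$-forms on $\mathcal{C}|_B$, hence so is their difference $\beta := \omega_{\text{Con}} - \omega'$. I would aim to prove that $\beta = \pi^*(\nabla_0^*\omega_{\text{Con}})$; granting this, $\beta$ vanishes exactly when $\nabla_0^*\omega_{\text{Con}} = 0$, i.e.\ exactly when the image of $\nabla_0$ (a half-dimensional submanifold, since $\dim\mathcal{B} = n$ and $\dim\contotal = 2n$) is isotropic, hence Lagrangian.

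The first step is to show that $\beta$ kills every vertical tangent vector. By the exact sequence (\ref{eq:extractlongnabla1}) the vertical tangent space along a fibre of $\pi$ is precisely the image of ${\rm Higgs}(E,{\bf p})$ under the inclusion $\iota$, which (as observed just before (\ref{eq:DefSymplecticInfinitesimal})) is the derivative of $\nabla\mapsto\nabla+\theta$. For $\omega_{\text{Con}}$, the commutativity of diagram (\ref{diag:partialSymplectic}) in Lemma \ref{Lem:CommutativePartDiag} gives $\omega_{\text{Con}}(\iota(\theta),\xi) = \delta(\eta(\theta,\bar\xi))$, where $\bar\xi$ is the image of $\xi$ in $H^1(\mathfrak{sl}(E,{\bf p})) = T_{(E,{\bf p})}\mathcal{B}$; in particular the fibres are $\omega_{\text{Con}}$-Lagrangian. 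On the other side, $\Psi_{\nabla_0}$ carries the fibrewise translations of $\mathcal{C}|_B$ to translations in the cotangent fibres of $T^*\mathcal{B}$, and under the identification ${\rm Higgs}(E,{\bf p})\simeq T^*_{(E,{\bf p})}\mathcal{B}$ furnished by $\eta$ in (\ref{eq:PerfectPairing1}) the Liouville form satisfies the \emph{same} formula $\omega'(\iota(\theta),\xi) = \delta(\eta(\theta,\bar\xi))$: this is simply the defining property of the canonical symplectic structure, that pairing a cotangent-fibre direction against an arbitrary vector records the evaluation on the projected base direction. Hence $\omega_{\text{Con}}$ and $\omega'$ agree on all pairs involving a vertical vector, so $\iota_v\beta = 0$ for every vertical $v$.

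Since $\beta$ is closed and satisfies $\iota_v\beta = 0$, Cartan's formula yields $\mathcal{L}_v\beta = \iota_v\, d\beta + d\,\iota_v\beta = 0$ for every vertical vector field $v$; as the fibres of $\pi$ are affine spaces, hence connected and contractible, $\beta$ is basic and descends to a closed $2$-form $\bar\beta$ on $B$ with $\beta = \pi^*\bar\beta$. To identify $\bar\beta$ I would restrict along the section: since $\pi\circ\nabla_0 = \mathrm{id}_B$ we get $\bar\beta = \nabla_0^*\beta = \nabla_0^*\omega_{\text{Con}} - \nabla_0^*\omega'$. But $\Psi_{\nabla_0}\circ\nabla_0$ is exactly the zero section of $T^*\mathcal{B}$, which is Lagrangian, so $\nabla_0^*\omega' = (\Psi_{\nabla_0}\circ\nabla_0)^*\omega_{\text{Higgs}} = 0$. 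Thus $\beta = \pi^*(\nabla_0^*\omega_{\text{Con}})$, and since $\pi^*$ is injective on forms, $\beta = 0$ if and only if $\nabla_0^*\omega_{\text{Con}} = 0$, which is the Lagrangian condition on $\nabla_0$.

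I expect the main obstacle to be the second step: verifying cleanly that $\omega'$ obeys the very same pairing formula with vertical vectors as $\omega_{\text{Con}}$ does in Lemma \ref{Lem:CommutativePartDiag}. This amounts to matching the connecting isomorphism $\delta$ and the perfect pairing $\eta$ against the tautological construction of the Liouville form on $T^*\mathcal{B}$, and to checking that $\Psi_{\nabla_0}$ intertwines the fibrewise-translation (vertical) directions on the two sides without introducing a spurious sign or scalar. Once this compatibility is pinned down, the remaining steps — closedness, the basic-form descent, and the restriction to the section — are formal.
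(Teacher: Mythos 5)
Your argument is correct, and it reaches the conclusion by a genuinely different global mechanism than the paper, although the two proofs share the same essential input. The common core is your first step: the agreement of $\omega_{\rm Con}$ and $\Psi_{\nabla_0}^*\omega_{\text{Higgs}}$ on any pair containing a vertical vector, which is exactly the content of Lemma \ref{Lem:CommutativePartDiag} (commutativity of diagram (\ref{diag:partialSymplectic})) combined with the fact that $\eta$ in (\ref{eq:PerfectPairing1}) \emph{is} the Liouville pairing and that the fibres are Lagrangian for both forms; the sign/normalization issue you flag as the main obstacle is precisely what that lemma pins down, so nothing further is needed there. Where you diverge is in propagating this along the fibres: the paper reduces to a point of the image of $\nabla_0$ by composing with a fibrewise translation of $T^*\mathcal{B}$ (Lemma \ref{lem:symplagranclosed}) and then does a pointwise bilinear expansion with respect to the splitting $T_{(E,\nabla)}\mathcal{C}=\text{Higgs}(E,\textbf{p})\oplus D\nabla_0(T_{(E,\textbf{p})}\mathcal{B})$, identifying the leftover term as $\omega_{\rm Con}\!\left(D\nabla_0 v_1, D\nabla_0 v_2\right)$; you instead form the closed difference $\beta=\omega_{\rm Con}-\Psi_{\nabla_0}^*\omega_{\text{Higgs}}$, show it is horizontal, use Cartan's formula and connectedness of the (affine) fibres to see it is basic, and evaluate the descended form by restricting along the section, where $\Psi_{\nabla_0}\circ\nabla_0$ is the zero section. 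Your route buys a coordinate- and splitting-free identity $\beta=\pi^*\!\left(\nabla_0^*\omega_{\rm Con}\right)$, at the cost of invoking closedness of $\omega_{\rm Con}$ (which the paper has established, so this is legitimate) and the standard descent criterion for basic forms; the paper's pointwise computation avoids both but requires choosing the splitting and the auxiliary translation. Both are complete proofs.
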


Before proving this proposition, let us recall some basic property of the Liouville form $\omega_{\text{Liouville}}$ on $T^*M$
for an arbitrary manifold $M$. If $(p_1,\ldots,p_n)$ are coordinates on $M$, and if we denote by $(q_1,\ldots,q_n)$
the coordinates on the fibers such that $dp_i$ corresponds to the section $q_i\equiv1$ and $q_j\equiv0$ for $j\not=i$,
then $\omega_{\text{Liouville}}=\sum_{i=1}^ndp_i\wedge dq_i$.
Given a section $\alpha\colon M\longrightarrow T^*M$, we can define an affine bundle transformation by
\[
\psi_\alpha \colon T^*M\longrightarrow T^*M ;\ (p,q)\longmapsto (p,q+\alpha(p)),
\]
where $T^*M$ is viewed as an affine bundle.

\begin{lemma}\label{lem:symplagranclosed}
Let $M$, $\alpha$ and $\psi_\alpha$ as above. Then the following are equivalent:
\begin{itemize}
\item $\psi_\alpha$ is a symplectomorphism,
\item $\alpha(M)$ is Lagrangian (as a submanifold),
\item $d\alpha=0$ (i.e. closed as a $1$-form).
\end{itemize}
\end{lemma}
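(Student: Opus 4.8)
The plan is to reduce all three conditions to the single requirement $d\alpha=0$, using the tautological $1$-form on $T^*M$. In the Darboux coordinates $(p_i,q_i)$ fixed before the statement, this is $\lambda=\sum_i q_i\,dp_i$, characterized by $d\lambda=-\omega_{\text{Liouville}}$ together with the tautological identity $\beta^*\lambda=\beta$ for every $1$-form $\beta\colon M\to T^*M$. I would write $\alpha=\sum_i a_i(p)\,dp_i$, so that the corresponding section is $p\mapsto(p,a_1(p),\dots,a_n(p))$, and carry out the two comparisons in coordinates. (All signs below are irrelevant to the conclusions, which only concern vanishing.)

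For the equivalence of the first and third bullets, I would compute the pullback of $\omega_{\text{Liouville}}$ by $\psi_\alpha$. Since $\psi_\alpha^*p_i=p_i$ and $\psi_\alpha^*q_i=q_i+a_i(p)$, one gets
\[
\psi_\alpha^*\omega_{\text{Liouville}}=\sum_i dp_i\wedge d\!\left(q_i+a_i\right)=\omega_{\text{Liouville}}-\pi^*d\alpha,
\]
where $\pi\colon T^*M\to M$ is the projection; invariantly this is the differential of $\psi_\alpha^*\lambda=\lambda+\pi^*\alpha$. Because $\pi$ is a submersion, $\pi^*d\alpha=0$ if and only if $d\alpha=0$; hence $\psi_\alpha$ is a symplectomorphism exactly when $\alpha$ is closed.

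For the equivalence of the second and third bullets, I would parametrize the graph $\alpha(M)$ by the embedding $\iota\colon p\mapsto(p,a(p))$. Then $\iota^*\lambda=\alpha$, so $\iota^*\omega_{\text{Liouville}}=-d(\iota^*\lambda)=-d\alpha$ (equivalently, $\iota^*\omega_{\text{Liouville}}=\sum_i dp_i\wedge da_i=-d\alpha$ directly). As $\dim\alpha(M)=n=\tfrac12\dim T^*M$, the submanifold $\alpha(M)$ is Lagrangian precisely when $\iota^*\omega_{\text{Liouville}}$ vanishes, i.e. when $d\alpha=0$. Chaining the two equivalences gives the statement.

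I do not anticipate a genuine obstacle, as this is a classical fact about cotangent bundles; the only points needing care are the sign bookkeeping relating $\lambda$, $\omega_{\text{Liouville}}$ and $d\alpha$ — immaterial here since every condition reduces to a vanishing — and the observation that pulling back a form from $M$ along the submersion $\pi$ detects whether it is zero.
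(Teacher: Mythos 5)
Your proof is correct and complete: the computation $\psi_\alpha^*\omega_{\text{Liouville}}=\omega_{\text{Liouville}}-\pi^*d\alpha$ and the tautological identity $\iota^*\lambda=\alpha$ (so $\iota^*\omega_{\text{Liouville}}=-d\alpha$ on the half-dimensional graph) establish both equivalences with $d\alpha=0$, and the sign conventions are consistent with the paper's normalization $\omega_{\text{Liouville}}=\sum_i dp_i\wedge dq_i$. The paper itself states this lemma without proof, treating it as a classical fact about cotangent bundles, and your argument is precisely the standard one it implicitly invokes.
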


\begin{proof}[Proof of Proposition \ref{prop:LagrangianReduction}]
We want to prove that $\Psi_{\nabla_0}$ is a symplectomorphism; then we only need to show that $\Psi^*_{\nabla_0}\omega_{\text{Higgs}}= \omega_{\text{Con}}$ at an arbitrary point $(E,\varphi)\in\text{Higgs}$.
As the vector bundle $\text{Higgs}$ identifies by \eqref{eq:PerfectPairing1} to $T^*\mathcal{B}$, we can use 
Lemma \ref{lem:symplagranclosed} to compose with an affine transformation sending $\varphi$ to 
the zero Higgs field. Now, the tangent space decomposes as 
\[
T_{(E,0)}\text{Higgs}=\text{Higgs}(E,\textbf{p}) \oplus T_{(E,\textbf{p})}\mathcal{B}
\]
and the symplectic form $\omega_{\text{Higgs}}$ may be written as 
\[
\omega_{\text{Higgs}}(u_1+v_1,u_2+v_2) = \eta(u_1,v_2)- \eta(u_2,v_1)
\]
where $\eta$ is the pairing given by \eqref{eq:PerfectPairing1}. Now, the tangent space at $(E,\nabla)$ decomposes as 
\[
T_{(E,\nabla)}\mathcal{C}=\text{Higgs}(E,\textbf{p}) \oplus T_{(E,\nabla)}\nabla_0(B) = \text{Higgs}(E,\textbf{p}) \oplus D\nabla_0(T_{(E,\textbf{p})}{\mathcal{B}}) 
\]
and note that the differential of $\Psi_{\nabla_0}^{-1}$ is the block diagonal $D\Psi_{\nabla_0}^{-1} = id \oplus D\nabla_0$. Now $\omega_{\text{Con}}$ may be written as
\begin{align*}
\omega_{\rm Con}\!\left(u_1+D\nabla_0v_1,u_2+D\nabla_0v_2\right) &= \omega_{\rm Con}\!\left(u_1,D\nabla_0v_2\right)- \omega_{\rm Con}\!\left(u_2,D\nabla_0v_1\right) + \\
& \quad\quad + \omega_{\rm Con}(u_1,u_2) + \omega_{\rm Con}\!\left(D\nabla_0v_1,D\nabla_0v_2\right)\\
& = \eta\!\left(u_1,D\nabla_0v_2\right)- \eta\!\left(u_2,D\nabla_0v_1\right) + \omega_{\rm Con}\!\left(D\nabla_0v_1,D\nabla_0v_2\right) \\
& = \Psi^*_{\nabla_0}\omega_{\text{Higgs}}\!\left(u_1+v_1,u_2+v_2\right) + \omega_{\rm Con}\!\left(D\nabla_0v_1,D\nabla_0v_2\right) 
\end{align*}
Indeed, it follows from the diagram \eqref{diag:partialSymplectic}. To prove the vanishing of $\omega_{\rm Con}(u_1,u_2)$, note that by \eqref{eq:extractlongnabla1} the image of $\text{Higgs}(E,\textbf{p})$ in $H^1(\mathfrak{sl}(E,\textbf{p}))$ is zero. Hence by taking the lower path in \eqref{diag:partialSymplectic} 
the pairing $\eta$ yields $0\in H^1(\Omega^1)$. 

Therefore $\Psi^*_{\nabla_0}\omega_{\text{Higgs}}= \omega_{\text{Con}}$ if and only if $\omega_{\rm Con}\!\left(D\nabla_0v_1,D\nabla_0v_2\right) = 0$, i.e. $\nabla_0(B)$ is Lagrangian.
\end{proof}

In our situation, we can compute this symplectic structure explicitly on a Zariski open set. Indeed, let $\mathcal{C} \rightarrow\mathcal{B}$ be given by the choice of directions $\epsilon = (\epsilon_1, \dots, \epsilon_n) \in \{-, + \}^n$ and let $\delta = -\epsilon$ be the opposite choice. One can verify that a fiber of $(E,\nabla)\mapsto (E, {\bf p}^\delta(\nabla))$ is a rational section of $(E,\nabla)\mapsto (E, {\bf p}^\epsilon(\nabla))$. The reduction is symplectic if the chosen section is Lagrangian, wherever it is defined. This is ensured if the projection is Lagrangian, which is covered by the following result.

\begin{lemma}The forgetful map $\mathcal{C}\longrightarrow \mathcal{B} ;\ (E,\nabla)\mapsto (E,\textbf{p}^\epsilon(\nabla))$ is Lagrangian, i.e. its fibers are Lagrangian subvarieties.
\end{lemma}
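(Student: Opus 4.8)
The plan is to fix a point $(E,{\bf p})\in\mathcal{B}$, describe the fiber $\pi_\epsilon^{-1}(E,{\bf p})$ together with its tangent space, and then deduce the vanishing of $\omega_{\rm Con}$ along the fiber directly from the commutativity already recorded in diagram (\ref{diag:partialSymplectic}). The forgetful map in the statement is $\pi_\epsilon\colon\mathcal{C}\to\mathcal{B}$, $(E,\nabla)\mapsto(E,{\bf p}^\epsilon(\nabla))$, so a fiber consists of all connections on a fixed $E$ sharing the same $\epsilon$-parabolic structure ${\bf p}$, the same eigenvalues $\nu$, and the same trace $\zeta$.

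First I would identify the fiber. Given two connections $\nabla,\nabla'$ in $\pi_\epsilon^{-1}(E,{\bf p})$, their difference $\Theta=\nabla-\nabla'$ is a section of $\mathfrak{sl}(E)\otimes\Omega^1_C(D)$. At each pole $t_k$, working in a basis whose first vector spans $p_k^{\epsilon_k}$, both residues are upper triangular with the same diagonal $(\nu_k^{\epsilon_k},\nu_k^{\delta_k})$, since both have eigenspace $p_k^{\epsilon_k}$ for eigenvalue $\nu_k^{\epsilon_k}$; hence ${\rm Res}_{t_k}(\Theta)$ is nilpotent, preserves $p_k^{\epsilon_k}$, and has image contained in $p_k^{\epsilon_k}$. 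Thus $\Theta\in{\rm Higgs}(E,{\bf p})=H^0(\mathfrak{sl}(E,{\bf p})\otimes\Omega^1_C(D)^{\text{nil}})$, so the fiber is an affine space modeled on ${\rm Higgs}(E,{\bf p})$. Its tangent space at any $(E,\nabla)$ is therefore the image of ${\rm Higgs}(E,{\bf p})$ under the inclusion in (\ref{eq:extractlongnabla1}); this is exactly the derivative of the action $\nabla\mapsto\nabla+\theta$ discussed before that sequence, and it equals the kernel of $d\pi_\epsilon$ (the right-hand map $H^1(\mathfrak{sl}(E,\nabla))\to H^1(\mathfrak{sl}(E,{\bf p}))$, which records the underlying deformation of the parabolic bundle).

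It remains to check dimension and isotropy. The perfect pairing (\ref{eq:PerfectPairing1}) identifies ${\rm Higgs}(E,{\bf p})$ with $T^*_{(E,{\bf p})}\mathcal{B}$, so $\dim{\rm Higgs}(E,{\bf p})=\dim\mathcal{B}=n$, which is half of $\dim\mathcal{C}=2n$. For isotropy take $u_1=\iota(\theta_1)$ and $u_2=\iota(\theta_2)$ in the image of ${\rm Higgs}(E,{\bf p})$. By Lemma \ref{Lem:CommutativePartDiag} the upper path of (\ref{diag:partialSymplectic}) gives $\omega_{\rm Con}(u_1,u_2)=\delta(\eta(\theta_1,\overline{u_2}))$, where $\overline{u_2}$ is the image of $u_2$ in $H^1(\mathfrak{sl}(E,{\bf p}))$. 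But by exactness of (\ref{eq:extractlongnabla1}) the image of ${\rm Higgs}(E,{\bf p})$ in $H^1(\mathfrak{sl}(E,{\bf p}))$ is zero, so $\overline{u_2}=0$ and hence $\omega_{\rm Con}(u_1,u_2)=0$. A subvariety of dimension $\tfrac12\dim\mathcal{C}$ on which the symplectic form vanishes is Lagrangian, as required.

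The only genuinely computational point is the local residue argument showing that the difference of two connections in a fiber lands in ${\rm Higgs}(E,{\bf p})$ rather than in a larger space of Higgs fields; I do not expect this to be an obstacle, since it is a two-by-two triangularity check. The conceptual content — the vanishing of $\omega_{\rm Con}$ on pairs of Higgs fields — has in effect already been isolated in the proof of Proposition \ref{prop:LagrangianReduction}, so this lemma is essentially that sub-statement, now phrased intrinsically in terms of the fibers of $\pi_\epsilon$.
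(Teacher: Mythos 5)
Your proof is correct and follows essentially the same route as the paper: identify the tangent space to a fiber of $\pi_\epsilon$ with the image of ${\rm Higgs}(E,\textbf{p})$ in $H^1(\mathfrak{sl}(E,\nabla))$ via (\ref{eq:extractlongnabla1}), then invoke the vanishing of $\omega_{\rm Con}$ on pairs of such classes, already established in the proof of Proposition \ref{prop:LagrangianReduction} using diagram (\ref{diag:partialSymplectic}). The extra details you supply (the residue computation showing the fiber is an affine space over ${\rm Higgs}(E,\textbf{p})$, and the dimension count) are accurate and merely make explicit what the paper leaves implicit.
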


\begin{proof} The differential of the forgetful map at $(E,\nabla)$ determines the dual to the map $\text{Higgs}(E,\textbf{p}) \longrightarrow H^1(\mathfrak{sl}(E,\nabla))$ from \eqref{eq:extractlongnabla1}. Then we want to prove that the pairing $\omega_{\rm Con}$ of \eqref{eq:DefSymplecticInfinitesimal}
is zero on a pair $(\{\phi_{ij}\},\{\psi_{ij}\})$ where each entry is in the image of $\text{Higgs}(E,\textbf{p})$. We showed that this is true in the proof of the last proposition.
\end{proof}

We now apply this construction to exhibit the symplectic form on ${\rm Con}^\nu$ via explicit computations with Fuchsian systems.

\subsection{Computation of the symplectic structure}\label{subsec:sympfuchs} 

 We have seen that choosing a Lagrangian section $\nabla_0$ of $\pi_\epsilon \colon {\rm Con}^{\nu}\rightarrow {\rm Bun}$
allows us to reduce the computation of the symplectic structure of $\mathcal {\rm Con}^{\nu}$ to that of Higgs moduli space 
(see Proposition \ref{prop:LagrangianReduction}). We can apply this to the setting of Fuchsian systems.
The section $\nabla_0$ constructed in section \ref{section: U_0} corresponds, via the isomorphism of Proposition \ref{prop:consys},
to the fiber of $\pi_{-\epsilon}$ over the point $(\infty,\ldots,\infty)\in\mathrm{Bun}\simeq(\mathbb P^1)^n$;
it is therefore Lagrangian. Let us compute the symplectic structure on the moduli space of Higgs bundles.
For this, we consider the chart:
\[
\mathbb{C}^{2n}\longrightarrow\mathrm{Higgs}(C,D) ;\ (z_1,\ldots,z_n,r_1,\ldots,r_n)\longmapsto (E_1,\Theta)
\]
where $\Theta$ is obtained by applying elementary transformations at $w_0$, $w_1$, $w_\lambda$ to 
\[
r_1\Theta^0_1+\cdots+r_n\Theta^0_n
\]
as defined in section \ref{section: U_0}. The image coincides with the restriction $\mathrm{Higgs}(C,D)\vert_{U_0}$ of Higgs fields
on parabolic bundles belonging to the chart $U_0\subset\mathrm{Bun}$ of section \ref{section: U_0}. We want now to compute the isomorphism
$\mathrm{Higgs}(C,D)\vert_{U_0}\simeq T^*U_0$ so that the symplectic form can be easily deduced from the Liouville form.

Here we follow the ideas of the proof of \cite[Proposition 6.1]{ViktoriaFrank}.
Given a parabolic vector bundle $(E_1, {\bf p})$ in $U_0$, or equivalently $(\mathcal{O}_C\oplus\mathcal{O}_C, {\bf p'})$
defined by $(z_1,\ldots,z_n)\in\mathbb C^n$, Serre Duality \eqref{eq:PerfectPairing1} provides a perfect pairing 
\[
\left<\cdot,\cdot\right> \colon {\rm Higgs}(E, {\bf p}) \times H^1(\mathfrak{sl}(E,\textbf{p}))\longrightarrow\mathbb{C}
\]
which allows us to identify the fiber of ${\rm Higgs}$ at $(E_1, {\bf p})$ with the dual of the tangent space
\[
T_{(E_1, {\bf p})}U_0\simeq H^1(\mathfrak{sl}(E_1,\textbf{p})).
\]
Let us compute first this latter identification. 

The deformation $(z_1,\ldots,z_j+s,\ldots,z_n)$ of the $j^{\text{th}}$ component of the parabolic structure
can be realized as follows. Cut out the curve $C$ into two open sets $U_j=C-\{t_j\}$ and $V_j$ a small disk around $t_j$.
Then we can paste the restrictions of $(\mathcal{O}_C\oplus\mathcal{O}_C, {\bf p'})$ to these open sets by means of the 
following gauge transformation over the restriction
\[
\left.\left(\mathcal{O}_C\oplus\mathcal{O}_C, {\bf p'}\right)\right\vert_{U_j\cap V_j}\longrightarrow \left.\left(\mathcal{O}_C\oplus\mathcal{O}_C, {\bf p'}\right)\right\vert_{U_j\cap V_j} ;\ 
\begin{pmatrix}z\\ w\end{pmatrix} \longmapsto \begin{pmatrix}1&s\\ 0&1\end{pmatrix}\begin{pmatrix}z\\ w\end{pmatrix}.
\]
This clearly shifts the parabolic direction $(z_j:1)\mapsto(z_j+s:1)$ without moving other parabolic directions.
After derivating, we obtain that the infinitesimal deformation $\frac{\partial}{\partial z_j}$ corresponds to the 
gauge transformation $\begin{pmatrix}0&1\\ 0&0\end{pmatrix}\in H^0(U_j\cup V_j,\mathfrak{sl}(\mathcal{O}_C\oplus\mathcal{O}_C, {\bf p'}))$,
which can better be viewed as a cocyle with respect to the covering $(U_j,V_j)$.
We can now compute the Serre Duality: the pairing of the Higgs field $\Theta^0_i$ with the above cocycle is defined by 
\[
\mathrm{tr}\left(\Theta^0_k\cdot\begin{pmatrix}0&1\\ 0&0\end{pmatrix}\right)=-z_k\phi_0+\phi_1+\frac{y_k}{x_k}\omega+\theta_k
\]
a holomorphic $1$-form on $U_j\cap V_j$. Viewed as a cocycle in $H^1(\Omega)$, one can give a meromorphic resolution 
by considering it as the difference between the zero $1$-form on $U_j$ and the meromorphic $1$-form $-z_k\phi_0+\phi_1+\frac{y_k}{x_k}\omega+\theta_k$ 
on $V_j$. The residue at $t_j$ gives the pairing: 
\[
\left< \Theta^0_k , \begin{pmatrix}0&1\\ 0&0\end{pmatrix} \right>=\mathrm{Res}_{t_j}\!\left(-z_k\phi_0+\phi_1+\frac{y_k}{x_k}\omega+\theta_k\right)=\delta_j^k
\]
where $\delta_j^k$ is the Kronecker symbol. We deduce that $\Theta^0_j$ is dual to $\frac{\partial}{\partial z_j}$,
and the symplectic structure on $\mathrm{Higgs}\ni\sum_j r_j\Theta_j^0$ is given by 
\[
\omega_{\rm Higgs}=dz_1\wedge dr_1+\cdots+dz_n\wedge dr_n.
\]
Now, because our section $\nabla_0\colon U_0\rightarrow\mathrm{Con}$ is Lagrangian, we can apply Proposition \ref{prop:LagrangianReduction}
and deduce the symplectic structure on $\mathrm{Con}\ni\nabla_0+\sum_j r_j\Theta_j^0$, namely:
\begin{equation}\label{eq:Darboux0}
\omega_{\rm Con}=dz_1\wedge dr_1+\cdots+dz_n\wedge dr_n.
\end{equation}
The same construction shows, mutatis mutandis, that the symplectic form on $\nabla_\infty+\sum_j s_j\Theta_j^\infty$	is
\begin{equation}\label{eq:DarbouxInfty}
\omega_{\rm Con}= -(dw_1\wedge ds_1+\cdots+dw_n\wedge ds_n).	
\end{equation}

Next we will see how $\omega_{\rm Con}$ encodes the eigenvalue $\nu$. First we consider only maps that preserves the affine bundle structure. 

\begin{thm}\label{thm:symptorelli1}
	If there exists a bundle symplectic isomorphism
	\[
	\begin{tikzcd}[column sep = 0.5cm]
		\left(\mathcal {\rm Con}^{\nu},\omega\right) \arrow[dr, "\pi_{+}"'] \arrow[rr,"\Phi", "\sim"'] && \left(\mathcal {\rm Con}^{\tilde{\nu}},\tilde{\omega}\right) \arrow[dl, "\pi_{+}"] \\ & {\rm Bun} &
	\end{tikzcd}
	\]
	then $\tilde{\nu} = \nu$ and $\Phi$ is the identity.
\end{thm}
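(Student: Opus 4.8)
The plan is to work in the two explicit trivializations of the affine bundle $\pi_+\colon {\rm Con}^\nu\to {\rm Bun}\simeq(\mathbb P^1)^n$ over the charts $U_0$ and $U_\infty$ of Section~\ref{sect:fuchssyst}, where the symplectic form is already in Darboux form, namely \eqref{eq:Darboux0} and \eqref{eq:DarbouxInfty}. Since the triangle forces $\pi_+\circ\Phi=\pi_+$, the map $\Phi$ covers the identity of ${\rm Bun}$, so in the $U_0$ chart it has the shape $(z,r)\mapsto(z,\tilde r(z,r))$. First I would show that being a symplectomorphism forces $\Phi$ to be a fibrewise translation: imposing $\Phi^*\tilde\omega=\omega$ on $\sum_j dz_j\wedge d\tilde r_j=\sum_j dz_j\wedge dr_j$, the coefficient of $dz_j\wedge dr_k$ gives $\partial\tilde r_j/\partial r_k=\delta_{jk}$, hence $\tilde r_j=r_j+f_j(z)$ for regular functions $f_j$ on $U_0\cong\mathbb C^n$ (the coefficient of $dz_j\wedge dz_k$ records that $\sum_j f_j\,dz_j$ is closed, though this is not needed below). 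The same argument in the $U_\infty$ chart, using \eqref{eq:DarbouxInfty}, gives $\tilde s_j=s_j+h_j(w)$ with $h_j$ regular on $U_\infty\cong\mathbb C^n$.

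The second step is to glue these two descriptions across $U_0\cap U_\infty\cong(\mathbb C^*)^n$ using the transition law \eqref{eq:aftrans}, which is exactly where $\nu$ (resp.\ $\tilde\nu$) enters. Writing $z_j=1/w_j$, the two connection sides obey $r_j=w_j^2 s_j+\nu_j w_j$ and $\tilde r_j=w_j^2\tilde s_j+\tilde\nu_j w_j$; substituting $\tilde r_j=r_j+f_j$, $\tilde s_j=s_j+h_j$ and eliminating $s_j$ yields the compatibility relation
\[
h_j(1/z_1,\dots,1/z_n)=z_j^2 f_j(z)-(\tilde\nu_j-\nu_j)\,z_j
\]
on $(\mathbb C^*)^n$. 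The final step is a regularity argument. The right-hand side is regular on all of $\mathbb C^n$ and divisible by $z_j$; hence $z\mapsto h_j(1/z)$ extends regularly across every $\{z_k=0\}$, i.e.\ $h_j$ is regular at $w_k=\infty$ for each $k$. Combined with regularity on $U_\infty$, treating one variable at a time (regular on $\mathbb P^1$ implies constant) shows $h_j$ is constant, and divisibility by $z_j$ forces that constant to be $0$, so $h_j\equiv0$. Then $z_j^2 f_j(z)=(\tilde\nu_j-\nu_j)z_j$, whence $f_j=(\tilde\nu_j-\nu_j)/z_j$; regularity of $f_j$ at $z_j=0$ forces $\tilde\nu_j=\nu_j$ and $f_j\equiv0$. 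As $j$ was arbitrary and $U_0$ is dense, $\Phi={\rm id}$ and $\tilde\nu=\nu$.

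The main obstacle I anticipate is the bookkeeping in the regularity step: one must remember that the compatibility relation holds a priori only on the torus $(\mathbb C^*)^n$, and argue that each side extends to the appropriate chart, applying the ``regular on $\mathbb P^1$ implies constant'' principle separately in each coordinate $w_k$ while holding the others in $\mathbb C^*$. It is also worth pinning down at the outset the category in which $\Phi$ lives (an isomorphism of varieties, so that $f_j,h_j$ are genuinely regular, in fact polynomial on $\mathbb C^n$), since the entire argument rests on the contrast between the $\nu$-independence of the underlying affine bundle (Theorem~\ref{thm:parconSn}) and the $\nu$-dependence surviving precisely in the off-diagonal transition term $\nu_j w_j$ of \eqref{eq:aftrans}.
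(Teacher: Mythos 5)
Your proposal is correct and follows essentially the same route as the paper's proof: expressing $\Phi$ in the $U_0$ and $U_\infty$ Darboux coordinates, deducing from $\Phi^*\tilde\omega=\omega$ that $\Phi$ is a fibrewise translation by functions of the base, and then using the transition law together with a regularity argument to kill the translations and force $\tilde\nu_j=\nu_j$. The only cosmetic difference is that the paper invokes Hartogs on the codimension-two complement of $U_0\cup U_\infty$ to conclude the translation terms are constant, whereas you run the ``regular on $\mathbb{P}^1$ implies constant'' argument one coordinate at a time; both are fine.
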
 

\begin{proof}
Since $\pi_{+}\circ \Phi = \pi_{+}$, it preserves $\pi_+^{-1}(U_0)$ and $\pi_+^{-1}(U_\infty)$. Then we may write the map in coordinates. For ${\rm Con}^{\nu}$ we will consider the two coordinate patches: 
\[
(\pi_+^{-1}(U_0), (z,r)) \text{ and } (\pi_+^{-1}(U_\infty), (w,s)) 
\]
where $z_j = \frac{1}{w_j}$ and $r_j = s_j w_j^2 + \nu_jw_j$, see \eqref{eq:aftrans}. For ${\rm Con}^{\tilde{\nu}}$ we consider the corresponding open sets with coordinates $(z,\tilde{r})$ and $(w, \tilde{s})$, respectively, where $z_j = \frac{1}{w_j}$ and $\tilde{r}_j = \tilde{s}_j w_j^2 + \tilde{\nu}_jw_j$. In both cases, $\pi_+$ is the projection onto the first $n$ coordinates.
	
Therefore we can express $\Phi$ as
\begin{align*}
	\left.\Phi\right|_{\pi_+^{-1}(U_0)} & = \left(z_1, \cdots, z_n, \phi^0_1(z,r), \cdots, \phi^0_n(z,r)\right) = (z, \tilde{r}), \\
\left.\Phi\right|_{\pi_+^{-1}(U_\infty)} & = \left(w_1, \cdots, w_n, \phi^\infty_1(w,s), \cdots, \phi^\infty_n(w,s)\right) = (w, \tilde{s}),
\end{align*}
where $\phi^0_j(z,r)$ and $\phi^\infty_j(w,s)$ are holomorphic functions on $\mathbb{C}^{2n}$ satisfying the compatibility conditions:
\begin{equation}\label{eq:compatb}
	\phi^\infty_j(\cdots, w_k, \cdots , s_k, \cdots ) w_j^2 + \tilde{\nu_j} w_j = \phi^0_j\left(\cdots, \frac{1}{w_k}, \cdots, s_k w_k^2 + \nu_kw_k, \cdots \right), \quad \forall w \in (\mathbb{C}^\ast)^n.
\end{equation}

The condition that $\Phi$ is symplectic, $\Phi^\ast \tilde{\omega} = \omega$, translates to the coordinate charts via the $2$-forms in display \eqref{eq:Darboux0} and \eqref{eq:DarbouxInfty}. In particular, $\phi^0_j$ and $\phi^\infty_j$ must satisfy 
\[
\frac{\partial\phi^0_j }{\partial r_k} = \frac{\partial\phi^\infty_j }{\partial s_k} = \delta^j_k, 
\]
where $\delta^j_k$ is the Kronecker symbol. Therefore, there exist holomorphic functions $c_j^0(z)$ and $c_j^\infty(w)$ such that
\begin{align*}
	\phi^0_j(z,r) = r_j + c_j^0(z) \text{ and } \phi^\infty_j(w,s) = s_j + c_j^\infty(w).
\end{align*}
Substituting these expressions in \eqref{eq:compatb} yields
\begin{equation}\label{eq:compatb2}
	 c_j^0\left(\frac{1}{w_1} , \cdots, \frac{1}{w_n}\right) = c_j^\infty(w)w_j^2 + (\tilde{\nu_j} - \nu_j)w_j , \quad \forall w \in (\mathbb{C}^\ast)^n.
\end{equation}
It follows that $c_j^0$ extends as a holomorphic function on $U_0\cup U_\infty$. On the other hand, the complement of $U_0\cup U_\infty$ in $(\mathbb{P}^1)^n$ has codimension $2$, which implies that $c_j^0$ extends to the whole space by Hartogs's Theorem. Thus $c_j^0$ is constant and so is $c_j^\infty$, by the same argument.
Thus, the equation \eqref{eq:compatb2} implies that 
\[
c_j^0 = c_j^\infty = 0 \text{ and } \tilde{\nu_j} = \nu_j.
\]

\end{proof}

We can use the map $\Par \colon {\rm Con}^{\nu}\rightarrow S^n$ given by Theorem \ref{thm:parconSn} to push $\omega_{\rm Con}$ forward to $S^n$. The opposite parabolic structure of our universal family $\nabla = \nabla_0 + \sum_j r_j\Theta_j^0$ is given by
\[
(\zeta_j:1):=(z_jr_j-\nu_j:r_j).
\]
Therefore, if we denote by $(z_j,\zeta_j)$ the coordinates on the $j^{\text{th}}$ factor $S\simeq\mathbb P^1\times\mathbb P^1\setminus\{z_j=\zeta_j\}$, then we have
\[
\zeta_j=z_j-\frac{\nu_j}{r_j}\ \ \ \Leftrightarrow\ \ \ r_j=\frac{\nu_j}{z_j -\zeta_j}
\]
which yields after substitution:
\[
\omega_{\nu}= \sum_{i = 1}^{n}\nu_i\frac{dz_i\wedge d\zeta_i}{(z_i-\zeta_i)^2}.
\]

Using this identification we can prove the following corollary.

\begin{cor}\label{cor:symptorelli}
	If there exists fiber preserving symplectic isomorphism
	\[
	\begin{tikzcd}
		\left( {\rm Con}^{\nu},\omega\right) \arrow[d, "\pi_{+}"] \arrow[r,"\Phi", "\sim"'] & \left( {\rm Con}^{\tilde{\nu}},\tilde{\omega}\right) \arrow[d, "\pi_{+}"] \\ {\rm Bun} \arrow[r, "\phi", "\sim"']& {\rm Bun}
	\end{tikzcd}
	\]
	then there exists a permutation $\sigma$ of $n$ elements such that $\tilde{\nu}_k = \nu_{\sigma(k)}$ for every $k\in\{1,\cdots,n\}$.
\end{cor}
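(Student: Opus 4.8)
The plan is to transport the whole situation to the chart $S^n$ via the isomorphism $\Par$ of Theorem \ref{thm:parconSn}. There the two symplectic structures become the explicit forms
\[
\omega_\nu = \sum_{i=1}^n \nu_i \frac{dz_i\wedge d\zeta_i}{(z_i-\zeta_i)^2}, \qquad \omega_{\tilde\nu} = \sum_{i=1}^n \tilde\nu_i \frac{dz_i\wedge d\zeta_i}{(z_i-\zeta_i)^2}
\]
computed just above the statement, and $\pi_+$ becomes the projection $(z,\zeta)\mapsto z$ onto the ``positive'' coordinates. Under $\Par$, the given $\Phi$ becomes a symplectomorphism $\bar\Phi\colon (S^n,\omega_\nu)\to(S^n,\omega_{\tilde\nu})$ covering the automorphism $\phi$ of ${\rm Bun}\simeq(\mathbb P^1)^n$. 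First I would analyze $\phi$: since ${\rm Bun}\simeq(\mathbb P^1)^n$, one has $\mathrm{Aut}\big((\mathbb P^1)^n\big)=\mathrm{PGL}_2(\mathbb C)^n\rtimes S_n$, so every automorphism permutes the $n$ projections and acts by a Möbius transformation on each factor. Thus there exist a permutation $\sigma$ and Möbius maps $m_1,\dots,m_n$ with $\phi(z)_i=m_i\big(z_{\sigma^{-1}(i)}\big)$.

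Next I would build a model symplectomorphism over $\phi$. The crucial observation is that each summand $\frac{dz\wedge d\zeta}{(z-\zeta)^2}$ is invariant under the \emph{diagonal} action $(z,\zeta)\mapsto(m(z),m(\zeta))$ of $m\in\mathrm{PGL}_2$; this is a one-line check from $m(z)-m(\zeta)=\tfrac{(ad-bc)(z-\zeta)}{(cz+d)(c\zeta+d)}$ together with $dm(z)=\tfrac{ad-bc}{(cz+d)^2}\,dz$. Hence the automorphism $G$ of $S^n$ defined by $G(z,\zeta)_i=\big(m_i(z_{\sigma^{-1}(i)}),\,m_i(\zeta_{\sigma^{-1}(i)})\big)$ covers $\phi$ on the base and satisfies $G^*\omega_{\tilde\nu}=\omega_{\nu'}$ with $\nu'_j=\tilde\nu_{\sigma(j)}$, i.e. $G$ is a symplectic isomorphism ${\rm Con}^{\nu'}\to{\rm Con}^{\tilde\nu}$ lying over $\phi$.

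Finally I would reduce to the base-trivial case already treated. The composite $\Phi':=G^{-1}\circ\bar\Phi$ covers $\mathrm{id}_{\rm Bun}$ and satisfies $\Phi'^*\omega_{\nu'}=\omega_\nu$, so it is a bundle symplectic isomorphism ${\rm Con}^\nu\to{\rm Con}^{\nu'}$ over $\pi_+$ in the exact sense of Theorem \ref{thm:symptorelli1}. (The eigenvalue $\nu'$ is a reordering of $\tilde\nu$, hence still meets the standing genericity hypotheses.) That theorem then forces $\nu'=\nu$, i.e. $\tilde\nu_{\sigma(j)}=\nu_j$ for all $j$; setting $\tau=\sigma^{-1}$ gives $\tilde\nu_k=\nu_{\tau(k)}$, which is the claimed conclusion.

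The genuinely structural inputs are the description of $\mathrm{Aut}\big((\mathbb P^1)^n\big)$ and the $\mathrm{PGL}_2$-invariance of each summand of $\omega_\nu$; once these are in place the statement is a clean reduction to Theorem \ref{thm:symptorelli1}, and I expect the only delicate point to be the index bookkeeping ensuring $\nu'_j=\tilde\nu_{\sigma(j)}$ so that the final permutation is correctly oriented.
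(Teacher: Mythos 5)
Your proposal is correct and follows essentially the same route as the paper: decompose the base automorphism of $(\mathbb P^1)^n$ into a permutation and Möbius factors, lift it diagonally to $S^n$ (the paper's map $\Xi$ is your $G$), use the $\mathrm{PGL}_2$-invariance of each summand $\frac{dz\wedge d\zeta}{(z-\zeta)^2}$ to see the lift pulls back $\omega_{\tilde\nu}$ to a permuted eigenvalue form, and then compose to reduce to the base-identity case of Theorem \ref{thm:symptorelli1}. The only differences are cosmetic index conventions ($\sigma$ versus $\sigma^{-1}$) and that you compose $G^{-1}\circ\bar\Phi$ directly rather than invoking uniqueness of the lift, which is if anything slightly cleaner.
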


\begin{proof}
Note that the case where $\phi$ is the identity is ensured by Theorem \ref{thm:symptorelli1}. In particular, the identity on ${\rm Bun}$ has a unique lifting which is the identity on ${\rm Con}^{\nu}$. It follows that, a general $\Phi$ is determined by the map $\phi$ on the base. Indeed, if $\Phi_1$ and $\Phi_2$ lift $\phi$, then $\Phi_2 \circ \Phi_1^{-1}$ lifts the identity, hence $\Phi_2 = \Phi_1$. The proof of the statement will follow from a reduction to this case.
	
Up to composing with ${\rm Par}$, we may prove the result for 
\[
\Phi\colon (S^n, \omega_{\nu}) \longrightarrow (S^n, \omega_{\tilde{\nu}}).
\]

Any automorphism $\phi$ of $(\mathbb P^1)^n$ has the form 
\[
\phi(z) = \left(\varphi_1(z_{\sigma(1)}), \cdots, \varphi_n(z_{\sigma(n)})\right)
\]
where $\sigma$ is a permutation of $n$ elements and $\varphi_i$ are M\"obius transformations. Now define 
\[
\Xi(z,\zeta) = \left( \varphi_1(z_{\sigma(1)}), \cdots, \varphi_n(z_{\sigma(n)}), \varphi_1(\zeta_{\sigma(1)}), \cdots, \varphi_n(\zeta_{\sigma(n)})\right).
\]
We will show that $\Xi$ is an automorphism of $S^n$ such that $\Xi^\ast\omega_{\nu^\sigma}= \omega_{\nu}$, where $\nu_j ^\sigma = \nu_{\sigma(j)}$. Recall that any M\"obius transformation is a composition of basic transformations $t\mapsto \alpha t$, $t\mapsto t+1$ and $t\mapsto 1/t$. Therefore we can factorize each $\varphi_j$ to show that
\[
\left( \frac{d\varphi_j(z_{\sigma(j)})\wedge d\varphi_j(\zeta_{\sigma(j)})}{(\varphi_j(z_{\sigma(j)})-\varphi_j(\zeta_{\sigma(j)}))^2} \right) = \frac{dz_{\sigma(j)}\wedge d\zeta_{\sigma(j)}}{\left(z_{\sigma(j)}-\zeta_{\sigma(j)}\right)^2} 
\]
for all $j\in \{1,\cdots, n\}$, and this shows that $\Xi^*(\omega_{\nu}) = \omega_{\nu^\sigma}$. 

Now consider the $\Phi$ the unique extension of $\phi$ such that $\Phi^\ast \omega_{\tilde{\nu}} = \omega_{\nu}$. We have that $\Phi \circ \Xi^{-1}$ lifts the identity and $\left(\Phi \circ \Xi^{-1}\right)^\ast \! \omega_{\tilde{\nu}} = \omega_{\nu^\sigma}$. Thus the result follows from Theorem \ref{thm:symptorelli1}.
\end{proof}

\section{Apparent map}\label{sect:appmap}

Given a connection $\nabla\colon E_1\rightarrow E_1\otimes \Omega^1_C(D)$ we can define an $\mathcal O_C$-linear map:
\[
\varphi_{\nabla}\colon \mathcal O_C \stackrel{s}{\hookrightarrow} E_1 \stackrel{\nabla}{\longrightarrow} E_1\otimes\Omega_C^1(D) \longrightarrow (E_1/\mathcal O_C)\otimes \Omega_C^1(D)
\]
where the last arrow is defined by the quotient map from $E_1$ to $E_1/s(\mathcal O_C)\simeq \mathcal O_C(w_{\infty})$ (denoted $E_1/\mathcal O_C$ for short). The zero divisor of $\varphi_{\nabla}$ defines an element of the linear system 
\[
Z(\varphi_{\nabla})\in|\mathcal O_C(w_{\infty}+D)|.
\]
Since $\deg D=n$ and $\Omega_C^1\simeq \mathcal O_C$, as $C$ has genus one, then 
\[
|\mathcal O_C(w_{\infty}+D)| = \mathbb P\!\left(H^0(C,(E_1/\mathcal O_C)\otimes \Omega_C^1(D))\right)\simeq \mathbb P^n.
\]
Hence we may define a rational map
\[
\operatorname{App}\colon {\rm Con}^{\nu} \dashrightarrow |\mathcal O_C(w_{\infty}+D)|
\]
which associates $(E_1, \nabla)$ to $Z(\varphi_{\nabla})$. Under a generic hypothesis on the eigenvalues we show that $\operatorname{App}$ is in fact a morphism. It turns out that this hypothesis is also necessary as we prove in the following lemma.

\begin{lemma}\label{lem:appmor}
The rational map $\operatorname{App}\colon {\rm Con}^{\nu} \dashrightarrow |\mathcal O_C(w_{\infty}+D)|$ is a morphism if and only if $\nu_1^{\epsilon_1}+\cdots +\nu_n^{\epsilon_n} \neq 0$ for any $\epsilon_{k}\in \{+,-\}$.
\end{lemma}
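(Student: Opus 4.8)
The plan is to identify the indeterminacy locus of $\operatorname{App}$ with the locus where the section $\varphi_{\nabla}$ vanishes identically, and then to characterize that locus. Since $\operatorname{App}(E_1,\nabla)=Z(\varphi_{\nabla})$ is the zero divisor of $\varphi_{\nabla}\in H^0(C,(E_1/\mathcal O_C)\otimes\Omega^1_C(D))$, the rational map is a morphism precisely when $\varphi_{\nabla}\neq 0$ for every $(E_1,\nabla)\in{\rm Con}^{\nu}$. First I would observe, using the Leibniz rule, that the $df$-term in $\nabla(fs)$ dies in the quotient $E_1/\mathcal O_C$; hence $\varphi_{\nabla}$ is $\mathcal O_C$-linear and equals multiplication by the class of $\nabla(s)$ modulo $\mathcal O_C$. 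Consequently $\varphi_{\nabla}\equiv 0$ if and only if $\nabla(s)\in\mathcal O_C\otimes\Omega^1_C(D)$, i.e. if and only if the line subbundle $\mathcal O_C=s(\mathcal O_C)$ is $\nabla$-invariant. This reduces the lemma to the statement: there exists $(E_1,\nabla)\in{\rm Con}^{\nu}$ with $\mathcal O_C$ invariant if and only if $\nu_1^{\epsilon_1}+\cdots+\nu_n^{\epsilon_n}=0$ for some choice of signs.

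For the easy implication I would argue as follows. If $\mathcal O_C$ is $\nabla$-invariant then $\nabla$ restricts to a logarithmic connection on the degree-zero line bundle $\mathcal O_C$, whose residue at $t_k$ is an eigenvalue of ${\rm Res}_{t_k}(\nabla)$, hence equals $\nu_k^{\epsilon_k}$ for a suitable sign $\epsilon_k$. The residue theorem on $C$ then gives $\sum_k\nu_k^{\epsilon_k}=-\deg\mathcal O_C=0$. Thus if all the sums $\nu_1^{\epsilon_1}+\cdots+\nu_n^{\epsilon_n}$ are nonzero, no invariant $\mathcal O_C$ can occur, so $\varphi_{\nabla}\neq 0$ everywhere and $\operatorname{App}$ is a morphism.

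For the converse I would construct an explicit reducible connection. Fix $\epsilon$ with $\sum_k\nu_k^{\epsilon_k}=0$. On the degree-zero bundle $\mathcal O_C$ choose a logarithmic connection $\nabla_1$ with residues $\nu_k^{\epsilon_k}$ (which exists exactly because $\sum_k\nu_k^{\epsilon_k}=0=-\deg\mathcal O_C$), and on $\mathcal O_C(w_\infty)=E_1/\mathcal O_C$ choose $\nabla_2$ with residues $\nu_k^{-\epsilon_k}$ (which exists since $\sum_k\nu_k^{-\epsilon_k}=-1=-\deg\mathcal O_C(w_\infty)$, by the Fuchs relation). I would then build an upper-triangular logarithmic connection $\nabla$ on $E_1$ inducing $\nabla_1$ and $\nabla_2$ on the sub and quotient of $0\to\mathcal O_C\to E_1\to\mathcal O_C(w_\infty)\to 0$, keeping the fixed nonsplit extension class so that the underlying bundle stays $E_1$; finally I would use the freedom of adding holomorphic $1$-forms to $\nabla_1,\nabla_2$ to normalize the trace to $\zeta$. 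Such a connection has eigenvalues $\nu$, underlying bundle $E_1$, and $\mathcal O_C$ invariant, so $\varphi_{\nabla}\equiv 0$ and $\operatorname{App}$ fails to be defined there.

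The main obstacle is the existence step in the converse: glueing the local upper-triangular pieces into a global connection on the \emph{indecomposable} bundle $E_1$. The obstruction to this glueing is the image of the extension class of $E_1$ under the connection induced on $\operatorname{Hom}(\mathcal O_C(w_\infty),\mathcal O_C)=\mathcal O_C(-w_\infty)$, and it lies in $H^1\!\left(C,\mathcal O_C(-w_\infty)\otimes\Omega^1_C(D)\right)\cong H^1\!\left(C,\mathcal O_C(D-w_\infty)\right)$. I expect to show this group vanishes: for $n\geq 2$ the line bundle $\mathcal O_C(D-w_\infty)$ has degree $n-1\geq 1$, whence $h^1=0$ on the elliptic curve and the obstruction is automatically zero; the case $n=1$ I would dispatch by the same vanishing under the assumption $t_1\neq w_\infty$. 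One must also check that the diagonal residues produce distinct eigenvalues $\nu_k^+\neq\nu_k^-$ at each pole, which is guaranteed by the standing hypotheses, so that $(E_1,\nabla)$ is a genuine point of ${\rm Con}^{\nu}$.
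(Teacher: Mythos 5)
Your proof is correct and the forward implication coincides with the paper's, but your converse takes a genuinely different route. The paper does not build a reducible connection by hand: it takes \emph{any} connection $\nabla\in{\rm Con}^{\nu}$ whose parabolic directions satisfy $s(t_j)\in p_j^{\delta_j}(\nabla)$ (such connections exist by the surjectivity of ${\rm Par}$, Theorem~\ref{thm:parconSn}), picks a meromorphic $1$-form $\xi$ with residues $\nu_j^{\delta_j}$, and observes that $\nabla(s)-s\xi$ has vanishing residues, hence is a holomorphic section of $E_1\otimes\Omega^1_C$; since $H^0(C,E_1\otimes\Omega^1_C)$ is spanned by $s\omega$, it follows that $\nabla(s)=s(\xi+c\,\omega)$ and $\varphi_\nabla\equiv 0$. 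This is shorter, reuses machinery already established, and in fact shows that the entire locus $\{s(t_j)\in p_j^{\delta_j}(\nabla)\ \forall j\}$ lies in the indeterminacy set. Your construction of an upper-triangular connection via the Atiyah--Weil type obstruction in $H^1\!\left(C,\mathcal O_C(D-w_\infty)\right)$ is self-contained (it does not invoke Theorem~\ref{thm:parconSn}) and is a legitimate alternative; the extra bookkeeping you flag (normalizing the trace, checking the eigenvalues) works out as you describe. Two small caveats: your argument leaves open the corner case $n=1$ with $t_1=w_\infty$, where the obstruction group $H^1(\mathcal O_C)$ does not vanish (the paper does allow $D=w_\infty$ in its Painlev\'e subsection, although there the hypothesis $\nu_1^{\pm}\notin\mathbb Z$ makes the converse vacuous); and, as with the paper's own converse, the connection you produce is reducible, so one is implicitly working outside the genericity regime under which $\contotal$ was shown to parameterize isomorphism classes --- this is inherent to the ``only if'' direction and not specific to your approach.
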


\begin{proof}
The indeterminacy locus of $\operatorname{App}$ is composed by the connections $\nabla$ mapped to zero in $(E_1/\mathcal O_C)\otimes \Omega_C^1(D)$ which means that there exists a meromorphic $1$-form $\xi$ with poles at most on $D$ such that 
\[
\nabla(s) = s\xi.
\]
In such case we have for each $j=1, \dots, n$ that $s(t_j) \in p_j^{\epsilon_j}(\nabla)$ with $\epsilon_{j}\in \{+,-\}$. And we also have that $\nabla$ restricts to a logarithmic connection on $\mathcal O_C$ whence
\[
\nu_1^{\epsilon_1}+\cdots +\nu_n^{\epsilon_n} = 0.
\]

Conversely, suppose that there exist $\delta_j \in \{+,-\}$, $j=1, \dots n$, such that $\nu_1^{\delta_1}+\cdots +\nu_n^{\delta_n} = 0$. On the one hand there exists a meromorphic $1$-form $\xi$ with simple poles only on $D$ and prescribed residues 
\[
Res_{t_j}\xi = \nu_j^{\delta_j}.
\]
On the other hand let $\nabla$ be a logarithmic connection such that $s(t_j) \in p_j^{\delta_j}(\nabla)$. It follows that $\nabla(s) - s\xi$ is a holomorphic section of $E_1 \otimes \Omega_C^1$. Since $ H^0(C, E_1 \otimes \Omega_C^1)$ is generated by $s\omega$ (recall that $\omega$ is a holomorphic one-form on $C$) it follows that
\[
\nabla(s) = s\,(\xi + c\,\omega)
\]
for some $c\in \mathbb{C}$. Hence $\varphi_{\nabla} = 0$.
\end{proof}

We may also consider the forgetful map 
\[
\pi_+ \colon {\rm Con}^{\nu} \longrightarrow {\rm Bun} \simeq (\mathbb P^1)^n
\]
which sends $(E_1, \nabla)$ to the parabolic vector bundle $(E_1, {\bf p}^+(\nabla))$. We also denote $\pi_+$ the natural extension of this map to $\overline{{Con}^{\nu}}$. Combining these two maps we can give a birational model for $\overline{{Con}^{\nu}}$ as we prove in the following result.

\begin{thm}\label{thm:apparentbirat}
If $\nu_1^{\epsilon_1}+\cdots +\nu_n^{\epsilon_n} \neq 0$ for any $\epsilon_{k}\in \{+,-\}$ then the map $\pi_+\times \operatorname{App}$ induces a birational map
\[
\pi_+ \times \operatorname{App} \colon \overline{{Con}^{\nu}} \dashrightarrow {\rm Bun}\times |\mathcal O_C(w_{\infty}+D)|
\]
whose indeterminacy locus is contained in $\overline{{Con}^{\nu}} \backslash {Con}^{\nu}$. Moreover, given $(E,{\bf{p}})\in \BB$ the rank of
\[
\left.(\pi_+\times \operatorname{App})\right|_{\pi_+^{-1}(E,{\bf{p}})}\colon \pi_+^{-1}(E,{\bf{p}}) \longrightarrow |\mathcal O_C(w_{\infty}+D)|
\]
coincides with the cardinality of the set $\{j\mid p_j\not\subset \mathcal O_{C} \}$.
\end{thm}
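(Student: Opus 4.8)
The plan is to reduce everything to an analysis of $\operatorname{App}$ along a single fibre of $\pi_+$, after recording three structural facts. First, both $\overline{{\rm Con}^{\nu}}=\mathbb{P}(\mathcal{E})$ (by Theorem \ref{thm:extension}) and ${\rm Bun}\times|\mathcal{O}_C(w_\infty+D)|$ have dimension $2n$. Second, under the standing hypothesis $\nu_1^{\epsilon_1}+\cdots+\nu_n^{\epsilon_n}\neq0$, Lemma \ref{lem:appmor} makes $\operatorname{App}$ a morphism on ${\rm Con}^{\nu}$, and $\pi_+$ is visibly a morphism there; hence $\pi_+\times\operatorname{App}$ is a morphism on ${\rm Con}^{\nu}$, so the indeterminacy locus of its extension to $\overline{{\rm Con}^{\nu}}$ is contained in the boundary $\overline{{\rm Con}^{\nu}}\setminus{\rm Con}^{\nu}$. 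Third, the fibre $\pi_+^{-1}(E_1,{\bf p})$ is a torsor under ${\rm Higgs}(E_1,{\bf p})$, i.e. the affine space $\nabla_0+{\rm Higgs}(E_1,{\bf p})$; writing $q\colon E_1\to E_1/\mathcal{O}_C$ for the quotient and $s$ for the canonical section, one has $\varphi_{\nabla_0+\theta}=\varphi_{\nabla_0}+\Xi(\theta)$ with $\Xi(\theta)=q(\theta(s))$ a linear map ${\rm Higgs}(E_1,{\bf p})\to H^0(\mathcal{O}_C(w_\infty+D))$. Thus $\operatorname{App}$ restricted to the fibre is $\theta\mapsto[\varphi_{\nabla_0}+\Xi(\theta)]$, whose rank equals $\dim\bigl(\operatorname{im}\Xi+\langle\varphi_{\nabla_0}\rangle\bigr)-1$.

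For birationality I would argue as follows. For generic $(E_1,{\bf p})$ every $p_j$ satisfies $p_j\not\subset\mathcal{O}_C$ (an open dense condition on $(\mathbb{P}^1)^n$), and then $\ker\Xi=0$: its elements are strongly parabolic Higgs fields preserving $\mathcal{O}_C$ with vanishing residues, i.e. traceless holomorphic Higgs fields on the simple bundle $E_1$, hence zero. So $\Xi$ is injective of rank $n$. Moreover the hypothesis guarantees, again via Lemma \ref{lem:appmor}, that $\varphi_{\nabla'}\neq0$ for every $\nabla'$ in the fibre; since these values sweep out the affine space $\varphi_{\nabla_0}+\operatorname{im}\Xi$, we get $0\notin\varphi_{\nabla_0}+\operatorname{im}\Xi$, equivalently $\varphi_{\nabla_0}\notin\operatorname{im}\Xi$. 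This makes $\theta\mapsto[\varphi_{\nabla_0}+\Xi(\theta)]$ injective with $n$-dimensional image, hence birational from the fibre onto $\{(E_1,{\bf p})\}\times|\mathcal{O}_C(w_\infty+D)|\cong\mathbb{P}^n$. As $\pi_+$ recovers the base point, $\pi_+\times\operatorname{App}$ is generically injective and dominant between irreducible $2n$-dimensional varieties, therefore birational.

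For the rank formula at an arbitrary $(E_1,{\bf p})$, set $P=\{j:p_j\subset\mathcal{O}_C\}$ and $m=|P|$, so that $k=n-m$. The residue of $\Xi(\theta)$ at $t_j$ is $q(\operatorname{Res}_{t_j}(\theta)\,s(t_j))$; strong parabolicity forces $\operatorname{Res}_{t_j}(\theta)$ to be nilpotent with image $p_j$, so for $j\in P$ this residue lies in $\mathcal{O}_C$ and is killed by $q$. Hence $\operatorname{im}\Xi\subseteq W:=H^0\bigl(\mathcal{O}_C(w_\infty+D-\sum_{j\in P}t_j)\bigr)$, with $\dim W=k+1$; the same computation, using $s(t_j)\in p_j^+$ for $j\in P$, shows $\varphi_{\nabla_0}\in W$. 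I then compute $\dim\ker\Xi=m$. The kernel is $H^0(\mathcal{K})$, where $\mathcal{K}$ is the sheaf of strongly parabolic Higgs fields preserving $\mathcal{O}_C$, fitting in $0\to\mathcal{O}_C(\sum_{j\in P}t_j-w_\infty)\to\mathcal{K}\to\mathcal{O}_C\to0$; its connecting morphism in cohomology is cup product with the nonzero extension class of $E_1$, and a short cohomology computation — using that the sub-bundle has vanishing $h^1$ when $m\geq1$, and that the connecting map is injective when $m=0$ — yields $h^0(\mathcal{K})=m$ in every case. Thus $\operatorname{rk}\Xi=k$ and $\operatorname{im}\Xi$ is a hyperplane in $W$. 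Invoking the hypothesis once more to get $\varphi_{\nabla_0}\notin\operatorname{im}\Xi$, we obtain $\operatorname{im}\Xi+\langle\varphi_{\nabla_0}\rangle=W$, whence the rank of $\operatorname{App}$ on the fibre is $\dim W-1=k=\#\{j:p_j\not\subset\mathcal{O}_C\}$.

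The \emph{main obstacle} is the cohomological computation $\dim\ker\Xi=m$: the sheaf $\mathcal{K}$ does not split, because $E_1$ is a non-trivial extension, so the count hinges on identifying the connecting morphism with cup product against the extension class of $E_1$ and tracking its (non)vanishing as $m$ varies. The second delicate point — but one that is essentially free — is excluding the degenerate case $\varphi_{\nabla_0}\in\operatorname{im}\Xi$, which would drop the rank by one; this is exactly where the generic eigenvalue hypothesis $\nu_1^{\epsilon_1}+\cdots+\nu_n^{\epsilon_n}\neq0$ re-enters through Lemma \ref{lem:appmor}, guaranteeing that $\varphi_{\nabla'}$ is nowhere zero along the fibre.
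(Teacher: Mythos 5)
Your proof is correct, and its overall skeleton coincides with the paper's: regularity on ${\rm Con}^{\nu}$ via Lemma \ref{lem:appmor}, reduction to the fibre $\pi_+^{-1}(E_1,{\bf p})$ written as $\nabla_0+{\rm Higgs}(E_1,{\bf p})$ (projectively completed by $\lambda$-connections), and the use of Lemma \ref{lem:appmor} a second time to rule out $\varphi_{\nabla_0}\in\operatorname{im}\Xi$. The one place where you genuinely diverge is the computation of the rank of $\Xi$. The paper sidesteps all sheaf cohomology by choosing the basis $\Theta^p_1,\dots,\Theta^p_n$ of ${\rm Higgs}(E_1,{\bf p})$ consisting of strongly parabolic Higgs fields with a \emph{single} pole each: then $\Theta^p_j(s)\wedge s$ vanishes precisely when $s(t_j)\in p_j$, and the nonvanishing ones are automatically linearly independent because their poles sit at distinct points, so the rank count is immediate. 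You instead compute $\dim\ker\Xi=m$ intrinsically, via the exact sequence $0\to\mathcal{O}_C(\sum_{j\in P}t_j-w_\infty)\to\mathcal{K}\to\mathcal{O}_C\to0$ and the identification of the connecting map with cup product against the extension class of $E_1$; this is more machinery for the same number, though it is robust (the only case where the cup-product identification matters, $m=0$, is independently covered by your direct argument that a strongly parabolic Higgs field preserving $\mathcal{O}_C$ with all $p_j\not\ni s(t_j)$ has vanishing residues and hence vanishes by simplicity of $E_1$). Your explicit identification of the target subspace $W=H^0(\mathcal{O}_C(w_\infty+D-\sum_{j\in P}t_j))$ and the hyperplane argument is a clean piece of bookkeeping that the paper leaves implicit; conversely, the paper's single-pole basis is the slicker device and is worth internalizing, since it also furnishes the linear independence that your argument obtains cohomologically.
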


\begin{proof}
Let $s$ denote the section $\mathcal{O}_C \hookrightarrow E_1$. For any logarithmic connection $\nabla$ on $E_1$ with poles on $D=t_1 + \dots + t_n$ we have that $\operatorname{App}(\nabla)$ is given by the zero divisor of
\[
\varphi_\nabla = \nabla(s) \wedge s \in H^0(C, \det(E_1)\otimes\Omega_C(D))
\]
which is the same as defining
\[
\operatorname{App}(\nabla) = [\varphi_{\nabla}] \in \mathbb{P}\!\left( H^0(C, \det(E_1)\otimes\Omega_C(D))\right).
\]
By Lemma \ref{lem:appmor} the condition on the eigenvalues implies that $\operatorname{App}$, hence $\operatorname{Bun}\times \operatorname{App}$, is regular on ${Con}^{\nu}$. Therefore the indeterminacy locus must lie only in the boundary divisor.

Now fix $(E_1,{\bf{p}}=\{p_1,\dots, p_n\})\in {\rm Bun}$. We will compute the rank of $\left.\operatorname{App} \right|_{\pi_+^{-1}(E_1,{\bf{p}})}$. Consider $ H^0(\mathfrak{sl}(E_1)\otimes \Omega_C(t_j))$ the space of traceless Higgs fields with simple pole on $t_j$. Since its dimension is three, there exists a unique (up to scalar multiplication) strongly parabolic Higgs field $\Theta^p_j$ with respect to $\bf{p}$, i.e. $Res_{t_j}(\Theta^p_j)$ is nilpotent with image $p_j$ and has no other poles. Let $\nabla_0$ be a connection in $\pi_+^{-1}(E_1,{\bf{p}})$. Then any other $\lambda$-connection $\nabla \in \pi_+^{-1}(E_1,{\bf{p}})$ can be written in a unique way as
\[
\nabla = c_0\nabla_0 + \sum_{j=1}^n c_j\Theta^p_j,
\]
for some $(c_0:\dots : c_n)\in \mathbb{P}^n$. Hence 
\[
\varphi_{\nabla} = c_0 \nabla_0(s)\wedge s + \sum_{j=1}^nc_j\Theta^p_j(s) \wedge s.
\]

Suppose that $\Theta^p_j(s) \wedge s \neq 0$ for each $j$. Since they have one pole each and at different points these sections are linearly independent. Since $ H^0(C, \mathcal{O}_C(w_\infty + D))$ has dimension $n+1$ they form a basis together with $\nabla_0(s)\wedge s$ and the Apparent map restricted to this fiber is an isomorphism. Indeed, if 
\[
c_0 \nabla_0(s)\wedge s + \sum_{j=1}^nc_j\Theta^p_j(s) \wedge s = \left(c_0 \nabla_0(s) + \sum_{j=1}^nc_j\Theta^p_j(s) \right)\wedge s = 0
\]
then, by Lemma \ref{lem:appmor}, $c_0=0$ and it follows that $c_1 = \dots = c_n=0$ since the $\Theta^p_j(s) \wedge s$ are linearly independent.

Now suppose that there exists $j\in \{1,\dots,n\}$ such that $\Theta^p_j(s) \wedge s = 0$. This occurs if and only if $\Theta^p_j(s)$ is holomorphic at $t_j$ which in turn means that $s(t_j)\in p_j$. Indeed, since $H^0(C, E_1\otimes \Omega_C)$ is generated by $s\omega$, we know that $\Theta^p_j(s)$ is holomorphic at $t_j$ if and only if $\Theta^p_j(s) = c\,s\omega$ for some constant $c\in \mathbb{C}$. On the other hand, it is clear that ${\rm Res}_{t_j}\!\left(\Theta^p_j(s)\right)\in p_j$ is zero if and only if $s(t_j) \in p_j$.

Henceforth we conclude that, in general, the image of $\left.\operatorname{App} \right|_{\pi_+^{-1}(p)}$ is the linear space spanned by $\nabla_0(s)\wedge s$ and $\Theta^p_j(s) \wedge s$ such that $s(t_j) \not \in p_j$. In particular, the (projective) rank is given by
\[
\rk \left.\operatorname{App} \right|_{\pi_+^{-1}(E_1,{\bf{p}})} = \#\{j \mid s(t_j)\not\in p_j\}.
\]

\end{proof}

\begin{remark}
The Apparent map may be computed, explicitly, via the identification with Fuchsian systems. In this case, the section $\mathcal{O}_C \hookrightarrow E_1$ becomes 
\[
\mathcal{O}_C(w_\infty -w_0 -w_1 -w_\lambda) \hookrightarrow \mathcal{O}_C\oplus\mathcal{O}_C,
\]
given by multiplication with $\left( \frac{x-\lambda}{y}, \frac{(1-\lambda)x}{y}\right)$, and we can use the local universal connections over $U_0$ and $U_\infty$ to express ${\rm App}$. Following this path, one can give an alternative proof of Theorem \ref{thm:apparentbirat}. 
\end{remark}


\begin{thebibliography}{10}
	
	\bibitem{Arinkin}
	D.~Arinkin.
	\newblock Orthogonality of natural sheaves on moduli stacks of {$\rm
		SL(2)$}-bundles with connections on {$\mathbb P^1$} minus 4 points.
	\newblock {\em Selecta Math. (N.S.)}, 7(2):213--239, 2001.
	\newblock \href {https://doi.org/10.1007/PL00001401}
	{\path{doi:10.1007/PL00001401}}.
	
	\bibitem{AL}
	D.~Arinkin and S.~Lysenko.
	\newblock Isomorphisms between moduli spaces of {${\rm SL}(2)$}-bundles with
	connections on {${\bf P}^1\setminus \{x_1,\cdots, x_4\}$}.
	\newblock {\em Math. Res. Lett.}, 4(2-3):181--190, 1997.
	\newblock \href {https://doi.org/10.4310/MRL.1997.v4.n2.a1}
	{\path{doi:10.4310/MRL.1997.v4.n2.a1}}.
	
	\bibitem{At}
	M.~F. Atiyah.
	\newblock Complex analytic connections in fibre bundles.
	\newblock In {\em Symposium internacional de topolog\'{\i}a algebraica
		{I}nternational symposium on algebraic topology}, pages 77--82. Universidad
	Nacional Aut\'{o}noma de M\'{e}xico and UNESCO, Mexico City, 1958.
	
	\bibitem{BISWAS}
	I.~Biswas, A.~Dan, and A.~Paul.
	\newblock Criterion for logarithmic connections with prescribed residues.
	\newblock {\em Manuscripta Math.}, 155(1-2):77--88, 2018.
	\newblock \href {https://doi.org/10.1007/s00229-017-0935-6}
	{\path{doi:10.1007/s00229-017-0935-6}}.
	
	\bibitem{BM}
	I.~Biswas and V.~Mu\~{n}oz.
	\newblock The {T}orelli theorem for the moduli spaces of connections on a
	{R}iemann surface.
	\newblock {\em Topology}, 46(3):295--317, 2007.
	\newblock \href {https://doi.org/10.1016/j.top.2007.02.005}
	{\path{doi:10.1016/j.top.2007.02.005}}.
	
	\bibitem{BN}
	I.~Biswas and J.~Nagel.
	\newblock A {T}orelli type theorem for the moduli space of rank two connections
	on a curve.
	\newblock {\em C. R. Math. Acad. Sci. Paris}, 341(10):617--622, 2005.
	\newblock \href {https://doi.org/10.1016/j.crma.2005.09.043}
	{\path{doi:10.1016/j.crma.2005.09.043}}.
	
	\bibitem{BH}
	H.~U. Boden and Y.~Hu.
	\newblock Variations of moduli of parabolic bundles.
	\newblock {\em Math. Ann.}, 301(3):539--559, 1995.
	\newblock \href {https://doi.org/10.1007/BF01446645}
	{\path{doi:10.1007/BF01446645}}.
	
	\bibitem{FJ}
	T.~Fassarella and L.~Justo.
	\newblock A torelli theorem for moduli spaces of parabolic vector bundles over
	an elliptic curve, 2020.
	\newblock \href {http://arxiv.org/abs/2008.05229} {\path{arXiv:2008.05229}}.
	
	\bibitem{FL}
	T.~Fassarella and F.~Loray.
	\newblock Flat parabolic vector bundles on elliptic curves.
	\newblock {\em J. Reine Angew. Math.}, 761:81--122, 2020.
	\newblock \href {https://doi.org/10.1515/crelle-2018-0006}
	{\path{doi:10.1515/crelle-2018-0006}}.
	
	\bibitem{FRIED}
	R.~Friedman.
	\newblock {\em Algebraic surfaces and holomorphic vector bundles}.
	\newblock Universitext. Springer-Verlag, New York, 1998.
	\newblock \href {https://doi.org/10.1007/978-1-4612-1688-9}
	{\path{doi:10.1007/978-1-4612-1688-9}}.
	
	\bibitem{ViktoriaFrank}
	V.~Heu and F.~Loray.
	\newblock Flat rank two vector bundles on genus two curves.
	\newblock {\em Mem. Amer. Math. Soc.}, 259(1247):v+103, 2019.
	\newblock \href {https://doi.org/10.1090/memo/1247}
	{\path{doi:10.1090/memo/1247}}.
	
	\bibitem{IY}
	Y.~Ilyashenko and S.~Yakovenko.
	\newblock {\em Lectures on analytic differential equations}, volume~86 of {\em
		Graduate Studies in Mathematics}.
	\newblock American Mathematical Society, Providence, RI, 2008.
	
	\bibitem{Inaba}
	M.-a. Inaba.
	\newblock Moduli of parabolic connections on curves and the {R}iemann-{H}ilbert
	correspondence.
	\newblock {\em J. Algebraic Geom.}, 22(3):407--480, 2013.
	\newblock \href {https://doi.org/10.1090/S1056-3911-2013-00621-9}
	{\path{doi:10.1090/S1056-3911-2013-00621-9}}.
	
	\bibitem{IIS}
	M.-a. Inaba, K.~Iwasaki, and M.-H. Saito.
	\newblock Moduli of stable parabolic connections, {R}iemann-{H}ilbert
	correspondence and geometry of {P}ainlev\'{e} equation of type {VI}. {I}.
	\newblock {\em Publ. Res. Inst. Math. Sci.}, 42(4):987--1089, 2006.
	\newblock URL: \url{http://projecteuclid.org/euclid.prims/1166642194}.
	
	\bibitem{IS}
	M.-a. Inaba and M.-H. Saito.
	\newblock Moduli of regular singular parabolic connections with given spectral
	type on smooth projective curves.
	\newblock {\em J. Math. Soc. Japan}, 70(3):879--894, 2018.
	\newblock \href {https://doi.org/10.2969/jmsj/76597659}
	{\path{doi:10.2969/jmsj/76597659}}.
	
	\bibitem{Iwasaki91}
	K.~Iwasaki.
	\newblock Moduli and deformation for {F}uchsian projective connections on a
	{R}iemann surface.
	\newblock {\em J. Fac. Sci. Univ. Tokyo Sect. IA Math.}, 38(3):431--531, 1991.
	
	\bibitem{Iwasaki92}
	K.~Iwasaki.
	\newblock Fuchsian moduli on a {R}iemann surface---its {P}oisson structure and
	{P}oincar\'{e}-{L}efschetz duality.
	\newblock {\em Pacific J. Math.}, 155(2):319--340, 1992.
	\newblock URL: \url{http://projecteuclid.org/euclid.pjm/1102635272}.
	
	\bibitem{kawai2003}
	S.~Kawai.
	\newblock Isomonodromic deformation of fuchsian projective connections on
	elliptic curves.
	\newblock {\em Nagoya Mathematical Journal}, 171:127--161, 2003.
	
	\bibitem{Komyo}
	A.~Komyo.
	\newblock Hamiltonian structures of isomonodromic deformations on moduli spaces
	of parabolic connections, 2019.
	\newblock \href {http://arxiv.org/abs/1611.03601} {\path{arXiv:1611.03601}}.
	
	\bibitem{Konno}
	H.~Konno.
	\newblock Construction of the moduli space of stable parabolic {H}iggs bundles
	on a {R}iemann surface.
	\newblock {\em J. Math. Soc. Japan}, 45(2):253--276, 1993.
	\newblock \href {https://doi.org/10.2969/jmsj/04520253}
	{\path{doi:10.2969/jmsj/04520253}}.
	
	\bibitem{Lame}
	F.~Loray.
	\newblock Isomonodromic deformations of {L}am\'{e} connections, the
	{P}ainlev\'{e} {VI} equation and {O}kamoto symmetry.
	\newblock {\em Izv. Ross. Akad. Nauk Ser. Mat.}, 80(1):119--176, 2016.
	\newblock \href {https://doi.org/10.4213/im8310} {\path{doi:10.4213/im8310}}.
	
	\bibitem{FrankValente}
	F.~Loray and V.~Ram{\'i}rez.
	\newblock A map between moduli spaces of connections.
	\newblock {\em SIGMA}, 16:(to appear), 2020.
	
	\bibitem{LS}
	F.~Loray and M.-H. Saito.
	\newblock Lagrangian fibrations in duality on moduli spaces of rank 2
	logarithmic connections over the projective line.
	\newblock {\em Int. Math. Res. Not. IMRN}, (4):995--1043, 2015.
	\newblock \href {https://doi.org/10.1093/imrn/rnt232}
	{\path{doi:10.1093/imrn/rnt232}}.
	
	\bibitem{MS}
	V.~B. Mehta and C.~S. Seshadri.
	\newblock Moduli of vector bundles on curves with parabolic structures.
	\newblock {\em Math. Ann.}, 248(3):205--239, 1980.
	\newblock \href {https://doi.org/10.1007/BF01420526}
	{\path{doi:10.1007/BF01420526}}.
	
	\bibitem{NaHK}
	H.~Nakajima.
	\newblock Hyper-{K}\"{a}hler structures on moduli spaces of parabolic {H}iggs
	bundles on {R}iemann surfaces.
	\newblock In {\em Moduli of vector bundles ({S}anda, 1994; {K}yoto, 1994)},
	volume 179 of {\em Lecture Notes in Pure and Appl. Math.}, pages 199--208.
	Dekker, New York, 1996.
	
	\bibitem{Nitsure}
	N.~Nitsure.
	\newblock Moduli of semistable logarithmic connections.
	\newblock {\em J. Amer. Math. Soc.}, 6(3):597--609, 1993.
	\newblock \href {https://doi.org/10.2307/2152778} {\path{doi:10.2307/2152778}}.
	
	\bibitem{PutFreeAct}
	A.~P\"{u}ttmann.
	\newblock Free affine actions of unipotent groups on {$\mathbb C^n$}.
	\newblock {\em Transform. Groups}, 12(1):137--151, 2007.
	\newblock \href {https://doi.org/10.1007/s00031-005-1133-2}
	{\path{doi:10.1007/s00031-005-1133-2}}.
	
	\bibitem{Seb}
	R.~Sebastian.
	\newblock Torelli theorems for moduli of logarithmic connections and parabolic
	bundles.
	\newblock {\em Manuscripta Math.}, 136(1-2):249--271, 2011.
	\newblock \href {https://doi.org/10.1007/s00229-011-0446-9}
	{\path{doi:10.1007/s00229-011-0446-9}}.
	
	\bibitem{SimpLcon}
	C.~Simpson.
	\newblock The {H}odge filtration on nonabelian cohomology.
	\newblock In {\em Algebraic geometry---{S}anta {C}ruz 1995}, volume~62 of {\em
		Proc. Sympos. Pure Math.}, pages 217--281. Amer. Math. Soc., Providence, RI,
	1997.
	\newblock \href {https://doi.org/10.1090/pspum/062.2/1492538}
	{\path{doi:10.1090/pspum/062.2/1492538}}.
	
	\bibitem{SimpsonHarmBun}
	C.~T. Simpson.
	\newblock Harmonic bundles on noncompact curves.
	\newblock {\em J. Amer. Math. Soc.}, 3(3):713--770, 1990.
	\newblock \href {https://doi.org/10.2307/1990935} {\path{doi:10.2307/1990935}}.
	
	\bibitem{Nestor}
	N.~F. Vargas.
	\newblock Geometry of the moduli of parabolic bundles on elliptic curves.
	\newblock {\em Transactions of the American Mathematical Society}, page~1, July
	2017.
	\newblock \href {https://doi.org/10.1090/tran/7330}
	{\path{doi:10.1090/tran/7330}}.
	
\end{thebibliography}

\end{document}